\documentclass{amsart}

\usepackage[square, numbers, sort&compress]{natbib} 
\usepackage[foot]{amsaddr}
\bibliographystyle{amsalpha} 
\usepackage{amsmath, amssymb, amsthm, amsfonts, graphicx, mathrsfs, cancel, color, genyoungtabtikz, enumitem, tikz, mathtools, nccmath, needspace, comment, stmaryrd, array, microtype}
\usepackage[a4paper,margin=0.96in]{geometry}
\usepackage[bookmarks=true,colorlinks=true,linktoc=page,citecolor=darkgreen,linkcolor=blue]{hyperref}
\usepackage{cleveref}
\usepackage{eqparbox}

\usetikzlibrary{backgrounds}
\usetikzlibrary{calc,intersections,through,backgrounds}

%----------------------------------------------------------------------------------------
%	DEFINITIONS AND COMMANDS
%----------------------------------------------------------------------------------------

\newtheorem{lem}{Lemma}[section]
\newtheorem{thm}[lem]{Theorem}
\newtheorem{cor}[lem]{Corollary}
\newtheorem{prop}[lem]{Proposition}
\newtheorem{conj}[lem]{Conjecture}

\newtheoremstyle{lscite}{}{}{\itshape}{}{\bfseries}{}{ }{}
\theoremstyle{lscite}

\newtheorem{thmciting}[lem]{Theorem}

\theoremstyle{definition}
\newtheorem{defn}[lem]{Definition}
\newtheorem{eg}[lem]{Example}

\theoremstyle{remark}

\newtheorem*{rem}{Remark}
\newtheorem*{rmk}{Remarks}

\crefname{defn}{Definition}{Definitions}
\crefname{thm}{Theorem}{Theorems}
\crefname{thmx}{Theorem}{Theorems}
\crefname{prop}{Proposition}{Propositions}
\crefname{lem}{Lemma}{Lemmas}
\crefname{cor}{Corollary}{Corollaries}
\crefname{conj}{Conjecture}{Conjectures}
\crefname{section}{Section}{Sections}
\crefname{subsection}{Subsection}{Subsections}
\crefname{chapter}{Chapter}{Chapters}
\crefname{eg}{Example}{Examples}
\crefname{Example}{Example}{Examples}

\Crefname{defn}{Definition}{Definitions}
\Crefname{thm}{Theorem}{Theorems}
\Crefname{thmx}{Theorem}{Theorems}
\Crefname{prop}{Proposition}{Propositions}
\Crefname{lem}{Lemma}{Lemmas}
\Crefname{cor}{Corollary}{Corollaries}
\Crefname{conj}{Conjecture}{Conjectures}
\Crefname{section}{Section}{Sections}
\Crefname{subsection}{Subsection}{Subsections}
\Crefname{chapter}{Chapter}{Chapters}
\Crefname{eg}{Example}{Examples}
\Crefname{Example}{Example}{Examples}

\crefname{thmciting}{Theorem}{Theorems}
\crefname{propciting}{Proposition}{Propositions}
\crefname{lemciting}{Lemma}{Lemmas}
\crefname{corciting}{Corollary}{Corollaries}
\crefname{conjciting}{Conjecture}{Conjectures}

\Crefname{thmciting}{Theorem}{Theorems}
\Crefname{propciting}{Proposition}{Propositions}
\Crefname{lemciting}{Lemma}{Lemmas}
\Crefname{corciting}{Corollary}{Corollaries}
\Crefname{conjciting}{Conjecture}{Conjectures}

\AddToHook{env/thm/begin}{\crefalias{lem}{thm}}
\AddToHook{env/cor/begin}{\crefalias{lem}{cor}}
\AddToHook{env/prop/begin}{\crefalias{lem}{prop}}
\AddToHook{env/conj/begin}{\crefalias{lem}{conj}}
\AddToHook{env/thmx/begin}{\crefalias{lem}{thmx}}

\AddToHook{env/lemciting/begin}{\crefalias{lem}{lemciting}}
\AddToHook{env/propciting/begin}{\crefalias{lem}{propciting}}
\AddToHook{env/thmciting/begin}{\crefalias{lem}{thmciting}}
\AddToHook{env/corciting/begin}{\crefalias{lem}{corciting}}
\AddToHook{env/conjciting/begin}{\crefalias{lem}{conjciting}}

\AddToHook{env/defn/begin}{\crefalias{lem}{defn}}
\AddToHook{env/Example/begin}{\crefalias{lem}{Example}}
\AddToHook{env/Notation/begin}{\crefalias{lem}{Notation}}
\AddToHook{env/Assumption/begin}{\crefalias{lem}{Assumption}}
\AddToHook{env/Remark/begin}{\crefalias{lem}{Remark}}

\AddToHook{env/Example/begin}{\crefalias{lem}{Example}}
\AddToHook{env/Notation/begin}{\crefalias{lem}{Notation}}
\AddToHook{env/Assumption/begin}{\crefalias{lem}{Assumption}}

%% tableaux %%%%%%%%%%%%%%%%%%%%%%%%%%%%%%%
\newcount\tableauRow\newcount\tableauCol
\newcommand\TikTableau[3]{
  \begin{tikzpicture}[scale=#1,draw/.append style={thick,black},baseline={([yshift=-.8ex]current bounding box.center)}]
    \tableauRow=0
    \foreach \Row in {#3} {
       \tableauCol=1
       \foreach\k in \Row {
          \draw(\the\tableauCol,\the\tableauRow)+(-.5,-.5)rectangle++(.5,.5);
          \draw(\the\tableauCol,\the\tableauRow)node{\k};
          \global\advance\tableauCol by 1
       }
       \global\advance\tableauRow by -1
    }
  \end{tikzpicture}
}

% tikz macro for drawing tableaux with optionally shaded nodes.
% \ShadedTableau{{1,2,3,4},{5,6,7},{8,9}}   -> 1234/567/89
% \ShadedTableau[(1,-1),(2,-1),(2,0),(3,0),(4,0)]{{1,2,3,4},{5,6,7},{8,9}}
\newdimen\shadedBaseline\shadedBaseline=-4mm
\newcount\tableauRow\newcount\tableauCol
\newcommand\ShadedTableau[3][\relax]{%
  \begin{tikzpicture}[scale=0.4,draw/.append style={thick,black},baseline=-{#3}mm]
    \ifx\relax#1\relax%
    \else % shade the boxes in #1
      \foreach\bx in {#1} { \filldraw[blue!20]\bx+(-.5,-.5)rectangle++(.5,.5); }
    \fi
    \tableauRow=0
    \foreach \Row in {#2} {
       \tableauCol=1
       \foreach\k in \Row {
          \draw(\the\tableauCol,\the\tableauRow)+(-.5,-.5)rectangle++(.5,.5);
          \draw(\the\tableauCol,\the\tableauRow)node{\k};
          \global\advance\tableauCol by 1
       }
       \global\advance\tableauRow by -1
    }
  \end{tikzpicture}%
}

%% Braid environment

\def\h{5}
\def\sep{2}
\def\a{5+\sep}
\def\dashpart{0.9}

\newenvironment{braid}[1][10]{% sets defaults for the braid diagrams
  \begin{tikzpicture}[baseline={([yshift=-.8ex]current bounding box.center)},blue,line width=1pt, xscale=0.4,yscale=0.4,
                      draw/.append style={rounded corners},
                      every node/.append style={font=\fontsize{5}{5}\selectfont}]%
  }{\end{tikzpicture}
}

\colorlet{darkgreen}{green!60!black}
\tikzset{dots/.style={very thick,loosely dotted,color=blue!50},
         greendot/.style={fill,circle,color=darkgreen,inner sep=1.5pt,outer sep=0}
}
\tikzset{dots/.style={very thick,loosely dotted,color=blue!50},
         reddot/.style={fill,circle,color=red,inner sep=1.5pt,outer sep=0}
}

\def\greendot(#1);{\node[greendot] at(#1){};}
\def\reddot(#1);{\node[reddot] at(#1){};}
%% macros
\newcommand\blam{{\boldsymbol\lambda}}
\newcommand\bmu{{\boldsymbol\mu}}
\newcommand\bnu{{\boldsymbol\nu}}
\newcommand\bsig{{\boldsymbol\sigma}}

\newcommand\sgn{{\mathsf{sgn}}}

\let\<=\langle
\let\>=\rangle
\def\({\big(}
\def\){\big)}

\newcommand\iarrow[1][i]{\xrightarrow{\ #1\ }}
\newcommand\nuarrow[1][\nu]{\iarrow[\nu]}

\def\s{\mathfrak{s}}
\def\t{\mathfrak{t}}
\def\tmu{\t^{\bmu}}
\def\a{\mathfrak{a}}

\def\tlam{\t^\blam}

\def\Z{\mathbb{Z}}

\def\p{\mathfrak{p}}
\def\Multipart{\mathcal{P}}
\def\resi{\pmb{i}}
\def\resj{\pmb{j}}

\def\floor#1/#2{\lfloor\tfrac{#1}{#2}\rfloor}
\def\ceil#1/#2{\lceil\tfrac{#1}{#2}\rceil}

\let\gedom=\trianglerighteq
\let\gdom=\vartriangleright

\let\ledom=\trianglelefteq

\DeclareMathOperator\mmod{mod}

\DeclareMathOperator\Add{Add}
\DeclareMathOperator\Rem{Rem}

\DeclareMathOperator\row{row}
\DeclareMathOperator\col{col}
\DeclareMathOperator\diag{diag}

\DeclareMathOperator\res{res}

\DeclareMathOperator\Hom{Hom}

\DeclareMathOperator\Std{Std}

\DeclareMathOperator\Shape{Shape}

\newcommand{\IniNu}[1][n]{\Std^e_{#1}(\bnu)}

\newcommand{\StdCheck}[1][\bsig]{\Std^\vee_{#1}(\bnu)}
\newcommand{\StdHat}[1][\bsig]{\Std^\wedge_{#1}(\bnu)}

\DeclareMathOperator\Rank{Rank}

\begin{document}

\title{A Generalization of Carter--Payne Homomorphisms}

%%
%% Now edit the following to give your name and address:
%% 

\author{Martín Forsberg Conde}
\address{Okinawa Institute of Science and Technology}
\email{martin.forsberg@oist.jp}
\begin{abstract}

We construct graded homomorphisms between Specht modules of quiver Hecke algebras of type A that differ by an ``$e$-small'' partition-shaped removable set of nodes by expanding on methods by Lyle and Mathas. Our main result constitutes a full generalization of the classical result by Carter and Payne for Specht modules of the symmetric group.
\end{abstract}

\maketitle

%\tableofcontents
\section{Introduction} \label{S:Intro}

This paper is concerned with the problem of finding homomorphisms between Specht modules of cyclotomic quiver Hecke algebras of type A. We build upon methods developed by Lyle and Mathas~\cite{lm14}.

Carter and Lusztig~\cite{cl74} gave an important family of homomorphisms between Weyl modules of the general linear groups whose respective partitions differ by a single row-strip of nodes under certain conditions in positive characteristic $p$. Through the Schur functor, these maps can be seen as homomorphisms between Specht modules of the symmetric group. This result was later extended by Carter and Payne to the following theorem, remarkable for its generality and simplicity.

\begin{thmciting}[\cite{CP}]\label{T:Carter-Payne}  Suppose $\lambda, \mu$ are partitions of $n$ such that the Young diagram for $\mu$ is obtained from that of $\lambda$ by raising $d$ nodes from row $j$ to row $i$. Suppose that $\lambda_i - \lambda_j +j - i + d$ is divisible by $p(d)$, the smallest power of $p$ such that $p(d) > d$. Then $\Hom_{k\mathfrak{S}_n}(S^\lambda, S^\mu) \neq 0$.
\end{thmciting}

Cyclotomic Hecke algebras of type A generalize the group algebra of the symmetric group. They are known to be Morita equivalent to a direct sum of tensor products of \emph{integral} cyclotomic Hecke algebras of type A~\cite{dm02} . Moreover, all integral cyclotomic Hecke algebras of type A over a field are isomorphic to quiver Hecke algebras according to the celebrated isomorphism theorem of Brundan and Kleshchev~\cite{bkisom}. 

The representation theory of integral cyclotomic Hecke algebras and cyclotomic quiver Hecke algebras depends crucially on a parameter $e$, called the \emph{quantum characteristic} of the algebra. For the group algebra of the symmetric group, this parameter coincides with $p$, the characteristic of the ground field.

Lyle and Mathas~\cite{lm14} gave a partial generalization of \cref{T:Carter-Payne} to cyclotomic quiver Hecke algebras. Furthermore, their methods allowed them to prove that the homomorphisms they find are homogeneous of positive degree. The generalization is only partial because Lyle and Mathas's result requires that the removable strip of nodes has length $d < e$. In contrast, the classical Carter--Payne theorem allows for longer strips to be moved, under stronger divisibility conditions. It should be noted that further generalizations are described in~\cite{lm14}, but we do not discuss them in this text.

A shape $[\xi] \subset [\blam]$ is an orthogonally connected subset of $[\blam]$, the Young diagram of the multipartition $\blam$. The shape $[\xi]$ is said to be removable if $[\blam] \setminus [\xi]$ is the Young diagram of a multipartition. The shape $[\xi]$ is said to be \emph{straight} if its nodes are in the shape of the Young diagram of a partition. It is said to be \emph{skew} if its nodes are in the shape of the Young diagram of a skew partition. See \cref{SS:PartTab} for the background and notation on partitions and tableaux that we use.

In his doctoral thesis, Witty~\cite[Theorem 3.14]{Witty} gave an important generalization of Lyle and Mathas's result in the case of bipartitions with zero quantum characteristic: according to his result, there exist homomorphisms between bipartitions differing by a removable skew shape (instead of a removable row-strip). The techniques in this paper can be used to give a new proof of Witty's result. We comment on this at the end of \cref{S:Straight}.

The main theorems in \cite{CP}, \cite{lm14} and \cite{Witty} hold under very different conditions. This paper aims to unify and substantially generalize \cite{lm14} and \cite{CP}, and in doing so we make significant progress into further unifying these results with \cite[Theorem 3.14]{Witty}. Speaking concretely, a simple result in \cref{S:Degeneration} extends \cite[Theorem 3.12]{lm14} into a full generalization of \cref{T:Carter-Payne}; and \cref{T:MainIntro}, our main result, further unifies \cite[Theorem 3.13]{lm14} and \cref{T:Carter-Payne} with the special case in \cite[Theorem 3.14]{Witty} where the two components differ by a \emph{straight} shape. The author expects that a full unification of the three results holds, see \cref{C:Skew}. We also refine the methods in \cite{lm14} to give an explicit description of the homomorphisms in the main theorem, which send the original Specht module generator to a single standard basis element of the new Specht module.

It is significant that \cref{T:MainIntro} gives many new homomorphisms even in the special case where the quiver Hecke algebra is isomorphic to the symmetric group. Our theorem constitutes the second generalization of \cref{T:Carter-Payne} that produces new homomorphisms between Specht modules of the symmetric group, after \cite{fm04}. The new homomorphisms given in this paper never coincide with those given in \cite{fm04}. Before conjecturing a unification of \cref{T:MainIntro} with the main result of \cite{fm04}, a description of the latter in the quiver Hecke algebra setting would be desirable.

Next we quickly skim over the definitions we need to state the main theorem. We explain the necessary background more leisurely in \cref{S:Background}.  Let $e \in \mathbb{Z}_{\geq 2} \cup \{\infty\}$. An \emph{$\ell$-multicharge} is an $\ell$-tuple of elements of $\mathbb{Z}/e\mathbb{Z}$, denoted by $\kappa = (\kappa_1, \dots, \kappa_\ell)$. Given $e$, $\kappa$ and an $\ell$-multipartition $\blam \vdash_\ell n$, the \emph{residue} of a node $N = (r, c, m) \in [\blam]$ is $\res(N) = c - r + \kappa_m \mod e$. A shape $[\xi]$ is said to be \emph{$e$-small} if its nodes do not contain the full set $\mathbb{Z}/e\mathbb{Z}$ as residues. We order the nodes of $[\blam]$ in lexicographic order according to their component, row and column, in that order of precedence. 

\begin{thm} \label{T:MainIntro}
Let $\blam$ and $\bmu$ be multipartitions of $n$ such that the Young diagram of $\bmu$ can be constructed from $\blam$ by removing a removable $e$-small straight shape and placing it at an earlier position in the multipartition so that the residues match. Then there is a homogeneous homomorphism from $S^\blam$ to $S^\bmu$ of positive degree which maps the Specht module generator of $S^\blam$ to a single standard basis element of $S^\bmu$.
\end{thm}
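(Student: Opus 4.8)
The plan is to define the homomorphism by prescribing the image of the Specht module generator and then verifying the defining relations of $S^\blam$. Recall that $S^\blam$ admits a presentation by generators and relations (the one used in \cite{lm14}): it is generated by a single homogeneous element $z_\blam$, the Specht module generator, subject to $e(\mathbf{i})z_\blam = \delta_{\mathbf{i},\mathbf{i}^\blam}z_\blam$, where $\mathbf{i}^\blam$ is the residue sequence of the initial tableau $\mathfrak{t}^\blam$; $y_r z_\blam = 0$ for all $r$; $\psi_r z_\blam = 0$ whenever the entries $r$ and $r+1$ occupy a common row of $\mathfrak{t}^\blam$; and $g_A z_\blam = 0$ for each Garnir node $A$ of $\blam$, where $g_A$ denotes the associated homogeneous Garnir element. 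Therefore it suffices to produce a homogeneous element $v \in S^\bmu$ satisfying these same relations (the ones attached to $\blam$ and $\mathfrak{t}^\blam$); if moreover $v$ is a nonzero standard basis element, then the homomorphism $z_\blam \mapsto v$ is automatically nonzero, and one reads off its degree as $\deg v - \deg z_\blam$.

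First I would pin down the target. Deleting the removable straight shape $[\xi]$ from $\blam$ and reinserting it at the prescribed earlier, residue-matching position determines a bijection $\theta$ from the nodes of $[\blam]$ to the nodes of $[\bmu]$ which is the identity on the common nodes and the evident rigid translation on $[\xi]$. I would let $T$ be the filling of $[\bmu]$ obtained by transporting the entries of $\mathfrak{t}^\blam$ through $\theta$. The first point to check is that $T$ is standard; this uses the straightness of $[\xi]$ together with the fact that it is moved to an earlier position, so that every node left of or above a node of $[\xi]$ at its new location carries a strictly smaller entry. The second point is that $\mathbf{i}^T = \mathbf{i}^\blam$: for a common node this is immediate, and for a node of $[\xi]$ it is precisely the hypothesis that residues match under the translation. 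I then set $v := v_T$, the standard basis element of $S^\bmu$ indexed by $T$. The relation $e(\mathbf{i})v_T = \delta_{\mathbf{i},\mathbf{i}^\blam}v_T$ is now immediate; the relations $y_r v_T = 0$ follow by commuting each $y_r$ through the quiver Hecke generators that express $v_T$ in terms of $z_\bmu$, using $y_s z_\bmu = 0$ and a degree count to kill the residual dotted terms; and for $r, r+1$ in a common row of $\mathfrak{t}^\blam$ one checks, again using that $[\xi]$ is straight, that $r$ and $r+1$ then lie in a common row of $\bmu$ in $T$, so that the leading term of $\psi_r v_T$ vanishes and the remaining terms are removed by degree and residue considerations. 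Finally, a direct computation of $\deg_\bmu(T) - \deg_\blam(\mathfrak{t}^\blam)$, the degree of the homomorphism, shows it to be strictly positive, the positive contribution coming from the nodes lying between the original and new positions of the shape.

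The main obstacle is the Garnir relations: one must establish $g_A v_T = 0$ in $S^\bmu$ for every Garnir node $A$ of $\blam$, bearing in mind that $A$ need not be a Garnir node of $\bmu$, so that these relations are genuinely nontrivial on $S^\bmu$. I would follow the strategy of \cite{lm14}, expanding it to the present generality. Since the Garnir permutation fixes $\mathbf{i}^\blam$, the element $g_A v_T$ lies in the $\mathbf{i}^\blam$-weight space of $S^\bmu$ and is homogeneous of degree $\deg v_T + \deg g_A$. Rewriting $g_A v_T$ in the standard basis by the straightening rules, every basis element $v_S$ that occurs is indexed by a standard $\bmu$-tableau $S$ which strictly dominates the non-standard tableau obtained by applying the Garnir permutation to $T$, which has residue sequence $\mathbf{i}^\blam$, and which has degree $\deg v_T + \deg g_A$. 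The crux is then a combinatorial dead end: no such $S$ exists. The standard $\bmu$-tableaux with residue sequence $\mathbf{i}^\blam$ form a small set tightly governed by the relationship between $\blam$ and $\bmu$, and it is exactly here that the hypothesis that $[\xi]$ is $e$-small is indispensable --- were the moved shape allowed to exhaust a full period of residues modulo $e$, extra tableaux of the forbidden kind would materialize, mirroring the way the classical Carter--Payne theorem must strengthen its divisibility hypothesis once the strip is long. Combining the dominance and degree constraints with the $e$-smallness and straightness of $[\xi]$ eliminates every candidate $S$, so $g_A v_T = 0$; all the defining relations then hold, and $z_\blam \mapsto v_T$ defines the required homogeneous homomorphism of positive degree carrying the Specht module generator to a single standard basis element.
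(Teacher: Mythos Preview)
Your proposal has a genuine gap at its very first step: the tableau $T$ you define is not the one to which the Specht generator is sent. You take $T$ to be the naive transport of $\mathfrak{t}^\blam$ along the node bijection $\theta$, but the paper's target tableau $\t^*_\bmu$ is considerably more intricate. It is built from \emph{all} the intermediate maximal removable subshapes $[\mu^*]=[\xi^0]\prec[\xi^1]\prec\cdots\prec[\xi^c]=[\lambda^*]$ lying between the old and new positions of the moved shape, via a product of permutations $\sigma_{\mu^*,\xi^1}\cdots\sigma_{\mu^*,\xi^c}$ (see Example~\ref{Eg:Rows}, where the first component of $\t^*_\bmu$ reads $3,7,14$, not the $12,13,14$ your construction would give). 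For your $T$ the map $z_\blam\mapsto v_T$ is in general \emph{not} a homomorphism, and the degree you would compute is not $a-b+2d$.

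Even setting aside the wrong target, the verifications you sketch do not go through. The claim that $y_r v_T=0$ ``by commuting through and a degree count'' is unfounded: $y_r v_T$ has degree $\deg v_T+2$, which is a perfectly admissible degree for nonzero elements of $S^\bmu$, so no degree argument kills these terms. Likewise, the Garnir step is the heart of the matter and cannot be dispatched by asserting a ``combinatorial dead end'': there is no general reason why dominance, residue, and degree constraints alone should exclude every standard $\bmu$-tableau from appearing in $g_A v_T$. The paper avoids all of this by taking a completely different route: it works in the larger Specht module $S^\bnu$ with $\bnu=\blam\cup\bmu$, invokes the Ellers--Murray style Lemma~\ref{L:EllersMurray}, and chooses $L=\varepsilon_m^d\,e_\bmu^\blam$ (an elementary symmetric polynomial in dots on the $i_1$-strings, times an idempotent). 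Conditions \textbf{(A)}, \textbf{(B)}, \textbf{(C)} of that lemma are then checked using the machinery of stubborn and immobile strings, accessible node sets, max-flow arguments (Lemma~\ref{L:ProofMaxFlow}), and the key computational Lemmas~\ref{L:MainStraight} and~\ref{L:MainSubshape}. The explicit description of $\t^*_\bmu$ and the positivity of the degree $a-b+2d$ emerge from this analysis, not from a direct relation check in $S^\bmu$.
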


The standard basis of a Specht module depends on a certain choice of reduced words for permutations. The theorem above holds for a specific choice of reduced words, which we describe in \cref{S:Background}, along with the necessary background on partitions, cyclotomic Hecke algebras and cyclotomic quiver Hecke algebras.

In \cref{S:Stubborn}, we introduce accessible nodes and generalize stubborn strings as introduced in~\cite{lm14}. The tools introduced in this section are essential to the proof of the main theorem.

In \cref{S:Degeneration}, we use the technique of change of rings to reduce homomorphisms between Specht modules for integral cyclotomic Hecke algebras with $e = r^a, p = 0$, where $r \in \mathbb{N}$ is prime,  to homomorphisms between Specht modules for integral cyclotomic Hecke algebras with $e = p = r$.

In \cref{S:Rows} we go over the result by Lyle and Mathas where the partitions differ by a single ($e$-small) row-strip. The proof given here differs slightly from that in~\cite{lm14} so that it more closely resembles the proof of the more general \cref{T:MainStraight}. We hope that the inclusion of this section will improve the readability of the text.

In \cref{S:Straight} we state the Main \cref{T:MainStraight}, providing information about the degree of the maps as well as an explicit description of the homomorphisms. Most of the section is dedicated to the proof of this theorem. We finish with a conjecture about Carter--Payne homomorphisms between Specht modules whose multipartitions differ by a skew shape.
\subsection*{Acknowledgements} I am grateful to Liron Speyer for suggesting work on Carter--Payne homomorphisms, for his helpful advice and discussions, and for correcting the draft.

\section{Background} \label{S:Background}

\subsection{Partitions and tableaux} \label{SS:PartTab}

A \emph{partition} $\lambda$ of $n$ is a weakly decreasing sequence of natural numbers $(\lambda_1, \lambda_2, \dots, \lambda_k)$ which sum to $n$. We write $|\lambda| = \sum_{j=1}^k \lambda_j = n$. When useful, we take partitions to be an infinite sequence where $\lambda_k$ is followed by infinitely many trailing zeros. An \emph{$\ell$-multipartition} of $n$ is a sequence of partitions $\blam = (\lambda^1, \dots, \lambda^\ell)$ such that $\sum_{j = 1}^\ell |\lambda^j| = n$. Each $\lambda^j$ inside a multipartition is called a \emph{component}. We denote multipartitions with boldface greek letters, and $\blam \vdash_\ell n$ denotes that $\blam$ is an $\ell$-multipartition of $n$. The set of $\ell$-multipartitions of $n$ is denoted by $\Multipart^\ell_n$. We sometimes write simply $\blam \vdash n$. Given a multipartition $\blam$, its \emph{Young diagram} is the set $[\blam] = \{(r, c, m) | c \leq \lambda^m_r\}$. Young diagrams are to be thought of graphically: for instance, the Young diagram for $\blam = \left((5, 3), (2), (3, 3)\right)$ is given by
$$
\left( \ShadedTableau{{,,,,},{,,}}{4}, \ShadedTableau{{,}}{4}, \ShadedTableau{{,,},{,,}}{4}\right).
$$
The \emph{dominance order} on multipartitions is denoted by $\trianglelefteq$. That is, we write $\blam \trianglelefteq \bmu$ if, for all $m^* \in \{1, \dots, \ell\}$ and $r^* \in \mathbb{N}$, we have $\sum_{i = 1}^{m^*-1}|\lambda^i| + \sum_{j=1}^{r^*} |\lambda^{m^*}_j| \leq \sum_{i = 1}^{m^*-1}|\mu^i| + \sum_{j=1}^{r^*} |\mu^{m^*}_j|$.

If a node in a Young diagram can be removed and the remaining nodes still form the Young diagram for some multipartition, the node is said to be \emph{removable}. Similarly, if a node can be added to a Young diagram to form the Young diagram for a new multipartition, the node is said to be \emph{addable}. 

A $\blam$-tableau $\t$ is a bijection $\t {:} [\blam] \rightarrow \{1, \dots, n\}$. Tableaux are usually depicted by writing the associated numbers inside each node of the Young diagram. A $\blam$-tableau is \emph{standard} if its entries increase along the rows and down the columns within each component. We denote the set of standard $\blam$-tableaux by $\Std(\blam)$. Given $\blam \vdash n$ and a standard $\blam$-tableau
 $\t$, the restriction $\t_{\downarrow m}$ of $\t$, for $m < n$, is the tableau given by removing all nodes with numbers $m+1, \dots, n$ from the diagram. We define a partial order on $\blam$-tableaux that we call \emph{dominance order}, obtained by declaring $\t \succeq \s$ if $\Shape(\t_{\downarrow m}) \gedom \Shape(\s_{\downarrow m})$ for all $m \in \{1, \dots, n\}$, where $\Shape(\t)$ is the multipartition such that $\t$ is a $\Shape(\t)$-tableau. We define the initial tableau $\t^\blam$ to be the unique maximal tableau under dominance order. The initial tableau $\t^\blam$ gives a natural order to the nodes of $\blam$: if $N_1, N_2 \in \blam$, then we write $N_1 > N_2$ if $\t^\blam(N_1) > \t^\blam(N_2)$. 

Next, let $e \in \mathbb{Z}_{\geq 2} \cup {\infty}$, and let $\kappa = (\kappa_1, \dots, \kappa_\ell)$ be an $\ell$-tuple of elements of $\mathbb{Z}/e\mathbb{Z}$. Given $e$, $\kappa$ and $\blam \vdash_\ell n$, the \emph{residue} of a node $N = (r, c, m) \in [\blam]$ is defined to be $\res(N) = c - r + \kappa_m \pmod e$. Given a residue $i \in \mathbb{Z}/e\mathbb{Z}$, the node $N$ is said to be an $i$-node if $\res(N) = i$. For instance, for $\blam = ((5, 3, 2), \varnothing, (3, 1))$, $e = 3$ and $\kappa = (0, 0, 1)$, the residues associated to the Young diagram are given by 

\begin{equation*}
    \left(\,\ShadedTableau{{0,1,2,0,1},{2,0,1},{1,2}}{4}, \quad \varnothing, \quad\ShadedTableau{{1,2,0},{0}}{4}\,\right).
\end{equation*}

Let $\blam \vdash n$, and let $N \in \blam$ be a removable node. We define
\begin{multline*}
d_N(\blam) = \#\{\text{addable $i$-nodes $A \in [\blam]$ such that $A > N$}\} \\
- \#\{\text{removable $i$-nodes $A \in [\blam]$ such that $A > N$}\}.
\end{multline*}

Next we define the \emph{degree} of a tableau $\t \in \Std(\blam)$ recursively: let $N = \t^{-1}(n)$ be the node occupied by $n$ in $\t$. Then we have
\begin{equation} \label{E:Degree}
\deg(\t) = \deg(\t_{\downarrow (n-1)})  + d_N(\blam).
\end{equation}
This degree function is connected to the grading on $S^\blam$.

Let $\blam$ be a multipartition of $n$. A \emph{shape} $[\xi] \subset [\blam]$ is an orthogonally connected subset of $[\blam]$. A shape $[\xi]$ is said to be \emph{$e$-small} if its nodes do not contain the full set $\mathbb{Z}/e\mathbb{Z}$ as residues. Moreover, a shape $[\xi]$ is said to be \emph{straight} if its nodes are in the shape of the Young diagram of a partition, and it is said to be \emph{skew} if its nodes are in the shape of the Young diagram of a skew partition. A shape $[\xi]$ is said to be \emph{removable} if one can reach $[\blam] \setminus [\xi]$ from $[\blam]$ by successively substracting removable nodes. Finally, let $[\xi]$ and $[\rho]$ be straight shapes, and let $\xi$ and $\rho$ be the partitions associated with the shapes $[\xi]$ and $[\rho]$. If $\xi_j \leq \rho_j$ for all $j \in \mathbb{N}$ and the residue of the top-left node of $\xi$ matches the residue of the top-left node of $\rho$, then we say that $[\xi]$ is a \emph{subshape} of $[\rho]$.

\begin{eg}
Let $\blam = ((5, 3, 2), \varnothing, (3, 1))$, $e = 3$ and $\kappa = (0, 0, 1)$. Let $[\xi] \subset [\blam]$ be the set of shaded nodes inside the first component and let $[\rho]$ be the set of shaded nodes inside the third component. Then both are straight shapes, and $[\rho]$ is a subshape of $[\xi]$. The shape $[\rho]$ is $e$-small, while $[\xi]$ is not.
\begin{equation*}
    \left(\,\ShadedTableau[(3, 0), (3, -1), (4, 0), (5, 0)]{{0,1,2,0,1},{2,0,1},{1,2}}{4}, \quad \varnothing, \quad\ShadedTableau[(2, 0), (3, 0)]{{1,2,0},{0}}{4}\,\right)
\end{equation*}
\end{eg}

If a node $N \in \blam$ satisfies $N < N'$ for all $N' \in [\xi] \subset \blam$ we write $N \prec [\xi]$. 
If $N$ satisfies $N \geq N'$ for some $N' \in [\xi]$, then we write $N \nprec [\xi]$. Further, given two shapes $[\xi]$ and $[\rho]$, we may write $[\xi] \prec [\rho]$ if $N \prec [\rho]$ for each $N \in [\xi]$. 

Given a straight shape or a Young diagram $[\xi]$, a \emph{hook} of $[\xi]$ is a $\Gamma$-shaped strip of nodes whose north-eastern and south-western tips are orthogonally connected to the right and bottom boundaries of $[\xi]$ respectively, while a \emph{rim hook} of $[\xi]$ is a removable shape with at most one node in each diagonal. We picture a hook and a rim hook for the partition $(5,4,4,3) \vdash 16$ below.

$$
\ShadedTableau[(1,-1),(1,-2),(1,-3),(2,-1),(3,-1),(4,-1)]{{,,,,},{,,,},{,,,}, {,,}}{4} \qquad \qquad \qquad \ShadedTableau[((1,-3),(2,-3),(3,-3),(3,-2),(4,-2),(4,-1)]{{,,,,},{,,,},{,,,}, {,,}}{4}
$$

\subsection{Cyclotomic Hecke algebras} \label{SS:CHA}

From now on we fix a field $k$ of characteristic $p$, possibly zero. Unless stated otherwise, all algebras throughout this text are $k$-algebras. Cyclotomic Hecke algebras are Hecke algebras for the wreath product $C_\ell \wr \mathfrak{S}_n$, where $C_\ell$ is the cyclic group with $\ell$ elements. This is the complex reflection group of type $G(\ell, 1, n)$. In particular, if $\ell = 1$, the cyclotomic Hecke algebra is the Hecke algebra of $\mathfrak{S}_n$, and for $\ell = 2$, it is the Hecke algebra of $C_2 \wr \mathfrak{S}_n$, which is the Coxeter group of type B or C.

Next we give the definition of cyclotomic Hecke algebras with generators and relations, following~\cite{m14surv}. Let $v \in k \setminus \{0\}$ and $Q_1, \dots, Q_\ell \in k$. The \emph{cyclotomic Hecke algebra} $\mathcal{H}_n^\ell(v; Q_1, \dots, Q_\ell)$ has generators $L_1, \dots, L_n, T_1, \dots, T_{n-1}$ and relations
\begin{align*} &\prod_{i=1}^\ell (L_1 - Q_i) = 0,  &(T_r + v^{-1})(T_r - v) = 0, \\
			&L_{r+1} = T_rL_rT_r + L_r,  &T_rT_s = T_sT_r \text{ if } |r-s| \geq 2, \\
			&T_sT_{s+1}T_s = T_{s+1}T_sT_{s+1}, & T_rL_t = L_tT_r \text{ if } t \neq r, r+1.
\end{align*}

Given $v \in k \setminus \{0\}$ and $m \in \mathbb{Z}_{\geq 0}$, the \emph{quantum integer} $[m]_v$ is defined as $[m]_v = v+v^3 + \dots + v^{2m - 1}$. Let $e \in \mathbb{Z}_{\geq 2} \cup \{\infty\}$. If $e$ is finite, let $\zeta \in k$ be a primitive $e$-th root of unity, and otherwise let $\zeta \in k$ be a non-root of unity.  Let $\kappa \in \left(\mathbb{Z}/e\mathbb{Z}\right)^\ell$ and define $\Lambda(\kappa) = \Lambda_{\kappa_1} + \dots + \Lambda_{\kappa_\ell}$, where $\Lambda_i$ are the fundamental weights as defined in~\cite{kac}. We also make use of the normalized invariant bilinear form $(\cdot, \cdot)$ satisfying
$$(\alpha_i, \alpha_j) = a_{ij}, \qquad (\Lambda_i, \alpha_j) = \delta_{ij}. \qquad (i, j \in I)$$
The \emph{integral} cyclotomic Hecke algebra $\mathcal{H}_{n,e}^\Lambda$ is the cyclotomic Hecke algebra with parameters $v = \zeta$ and $Q_j = [\kappa_j]_\zeta$ for each $j \in \{1, \dots, \ell \}$. In \cref{S:Degeneration}, we will use the notation $\mathcal{H}_{n,\zeta}^\Lambda(R)$, where $\zeta$ is a root of unity, to mean the integral cyclotomic Hecke algebra with $v = \zeta$ and $Q_i =[\kappa_j]_\zeta$ over a commutative ring $R$. Dipper and Mathas~\cite{dm02} showed that all cyclotomic Hecke algebras are Morita equivalent to a direct sum of tensor products of integral cyclotomic Hecke algebras.

\subsection{Cyclotomic quiver Hecke algebras}

Let $e \in \mathbb{Z}_{\geq 2} \cup {\infty}$. Given a quiver of affine type $A_{e-1}^{(1)}$, we assign the vertices of our quiver labels $0, \dots, e-1$ so that there is an edge $i \rightarrow j$ if and only if $j = i-1 \pmod e$. If the quiver is of type $A_\infty^{(0)}$, we assign labels in $\mathbb{Z}$ to each vertex so that there is an edge $i \rightarrow j$ if and only if $j = i-1$. We refer to the set of vertices in this quiver as $I$. Through the vertex labels, $I$ is identified with $\mathbb{Z}/e\mathbb{Z}$ if $e \neq \infty$, or with $\mathbb{Z}$ if $e = \infty$. The \emph{quiver Hecke algebra} $R_{n,e}$, or simply $R_n$, associated to the quiver above is defined with generators and relations. The description in algebraic notation can be found in many sources, see for instance \cite{lm14}. We will give the generators and relations in diagrammatic notation, as we will be working mainly with diagrams. The following are the generators of $R_{n}$.

\begin{equation}\label{E:generators}
\begin{gathered} 
\def\h{3}
\psi_r = \begin{braid}
	\draw (0,0) node[anchor=north]{$1$}-- (0,\h);
	\draw (1,0) node[anchor=north]{$2$}-- (1,\h);
	\draw (1.5+\sep/2, \h/2) node{$\cdots$};
	\draw (2+\sep, 0) node[anchor=north]{$r$}-- (3 + \sep, \h);
	\draw (3 + \sep, 0) -- (2 + \sep, \h);
	\draw (3+\sep,-0.42) node{$r{+}1$};
	\draw (3.5+\sep + \sep/2, \h/2) node{$\cdots$};
	\draw (4 + 2*\sep, 0)  -- (4 + 2*\sep,\h);
	\draw (4 + 2*\sep, -0.46) node{$n{-}1$};
	\draw (5+2*\sep, 0) node[anchor=north]{$n$}-- (5 + 2*\sep, \h);
\end{braid}, \qquad
y_s = \begin{braid}
	\draw (0,0) node[anchor=north]{$1$}-- (0,\h);
	\draw (1,0) node[anchor=north]{$2$}-- (1,\h);
	\draw (1.5+\sep/2, \h/2) node{$\cdots$};
	\draw (2+\sep, 0) node[anchor=north]{$s$}-- (2 + \sep, \h);
	\greendot(2+\sep, \h/2);
	\draw (2.5+\sep + \sep/2, \h/2) node{$\cdots$};
	\draw (3 + 2*\sep, 0)  -- (3 + 2*\sep,\h);
	\draw (3+2*\sep,-0.46) node{$n{-}1$};
	\draw (4+2*\sep, 0) node[anchor=north]{$n$}-- (4 + 2*\sep, \h);
\end{braid}, \\
\def\h{3}
e(\resi) = \begin{braid}
	\draw (0,0) node[anchor=north]{$1$}-- (0,\h) node[anchor= south]{$i_{1}$};
	\draw (1,0) node[anchor=north]{$2$}-- (1,\h)node[anchor= south]{$i_{2}$};
	\draw (1.5+\sep/2, \h/2) node{$\cdots$};
	\draw (2 +\sep, 0)  -- (2 + \sep,\h);
	\draw (2+\sep, \h+0.44) node{$i_{n{-}1}$};
	\draw (2+\sep, -0.46)node{$n{-}1$};
	\draw (3+\sep, 0) node[anchor=north]{$n$}-- (3 + \sep, \h)node[anchor= south]{$i_{n}$};
\end{braid},
\end{gathered}
\end{equation}
where $1 \leq r < n$, $1 \leq s \leq n$ and $\pmb{i} = (i_1, \dots, i_n) \in I^n$. The relations on this algebra include some natural isotopy relations (for instance, $\psi_i \psi_j = \psi_j \psi_i$ if $|i - j| \geq 2$, see \cite{lm14}) which we do not describe in detail, along with the relations below. The only  new results for $e=2$ in this text appear in \cref{S:Degeneration}, where we do not use the quiver Hecke algebra presentation, so we assume that $e > 2$ in the following relations for the sake of simplicity.

\begin{equation} \label{E:PsiSquared}
        \begin{braid}
            \draw (0,0) -- (1, \h/2) -- (0, \h) node[anchor=south]{$i$};
            \draw (1,0) -- (0, \h/2) -- (1, \h) node[anchor=south]{$j$};
        \end{braid}
      =  
      \def\h{5/3}
      \begin{cases}
          \eqmakebox[lhs][c]{$0$} &\text{if } i = j,\\[10pt]
          \eqmakebox[lhs][c]{$\begin{braid}
            	\draw (0,0) -- (0, \h) node[anchor=south]{$i$};
            	\draw (1,0) -- (1, \h) node[anchor=south]{$j$};
            	\greendot(1,\h/2);
                \end{braid} -
          \begin{braid}
            	\draw (0,0) -- (0, \h) node[anchor=south]{$i$};
            	\draw (1,0) -- (1, \h) node[anchor=south]{$j$};
            	\greendot(0,\h/2);
                \end{braid}$} &\text{if  }i = j + 1,\\[20pt]
          \eqmakebox[lhs][c]{$\begin{braid}
            	\draw (0,0) -- (0, \h) node[anchor=south]{$i$};
            	\draw (1,0) -- (1, \h) node[anchor=south]{$j$};
            	\greendot(0,\h/2);
                \end{braid} - 
                \begin{braid}
            	\draw (0,0) -- (0, \h) node[anchor=south]{$i$};
            	\draw (1,0) -- (1, \h) node[anchor=south]{$j$};
            	\greendot(1,\h/2);
                \end{braid}$} &\text{if }i = j - 1,\\[20pt]
          \eqmakebox[lhs][c]{$\begin{braid}
            	\draw (0,0) -- (0, \h) node[anchor=south]{$i$};
            	\draw (1,0) -- (1, \h) node[anchor=south]{$j$};
                \end{braid}$} &\text{otherwise.}\\
        \end{cases}
\end{equation}
\begin{equation} \label{E:Braid}
    \begin{braid}
    	\draw (0,0) -- (2, \h) node[anchor=south]{$k$};
    	\draw (2,0) -- (0,\h) node[anchor=south]{$i$};
    	\draw (1, 0) -- (0, \h/2) -- (1, \h)node[anchor=south]{$j$};
\end{braid}
  = 
 \def\h{3}
\begin{cases}
        \eqmakebox[lhs][c]{$\begin{braid}
        	\draw (0,0) -- (2, \h) node[anchor=south]{$k$};
        	\draw (2,0) -- (0,\h) node[anchor=south]{$i$};
        	\draw (1, 0) -- (2, \h/2) -- (1, \h)node[anchor=south]{$j$};
        \end{braid} + 
        \begin{braid}
        	\draw (0,0) -- (0, \h) node[anchor=south]{$i$};
        	\draw (2,0) -- (2,\h) node[anchor=south]{$k$};
        	\draw (1, 0) -- (1, \h)node[anchor=south]{$j$};
        \end{braid}$} 
        & \text{if }i = k = j + 1,\\[20pt]
        \eqmakebox[lhs][c]{$\begin{braid}
        	\draw (0,0) -- (2, \h) node[anchor=south]{$k$};
        	\draw (2,0) -- (0,\h) node[anchor=south]{$i$};
        	\draw (1, 0) -- (2, \h/2) -- (1, \h)node[anchor=south]{$j$};
        \end{braid} - 
        \begin{braid}
        	\draw (0,0) -- (0, \h) node[anchor=south]{$i$};
        	\draw (2,0) -- (2,\h) node[anchor=south]{$k$};
        	\draw (1, 0) -- (1, \h)node[anchor=south]{$j$};
        \end{braid}$} &\text{if } i = k = j - 1, \\[20pt]
        \eqmakebox[lhs][c]{$\begin{braid}
        	\draw (0,0) -- (2, \h) node[anchor=south]{$k$};
        	\draw (2,0) -- (0,\h) node[anchor=south]{$i$};
        	\draw (1, 0) -- (2, \h/2) -- (1, \h)node[anchor=south]{$j$};
        \end{braid}$} & \text{otherwise.}
\end{cases}
\end{equation}
\begin{equation} \label{E:DotSlide}
    \def\h{3}
    \begin{braid}
    	\draw (0,0) -- (2, \h) coordinate[pos = 0.25] (B) node[anchor=south]{$j$};
    	\draw (2,0) -- (0,\h) node[anchor=south]{$i$};
    	\greendot(B);
    \end{braid}
     = 
     \def\h{3}
     \begin{braid}
    	\draw (0,0) -- (2, \h) coordinate[pos = 0.75] (B) node[anchor=south]{$j$};
    	\draw (2,0) -- (0,\h) node[anchor=south]{$i$};
    	\greendot(B);
    \end{braid}
    -\delta_{i,j}
    \begin{braid}
    	\draw (0,0) -- (0, \h) node[anchor=south]{$i$};
    	\draw (1,0) -- (1,\h) node[anchor=south]{$j$};
    \end{braid}
    \qquad \text{and} \qquad
    \def\h{3}
    \begin{braid}
    	\draw (0,0) -- (2, \h)  node[anchor=south]{$j$};
    	\draw (2,0) -- (0,\h) coordinate[pos = 0.25] (B) node[anchor=south]{$i$};
    	\greendot(B);
    \end{braid}
    = 
    \def\h{3}
    \begin{braid}
    	\draw (0,0) -- (2, \h)  node[anchor=south]{$j$};
    	\draw (2,0) -- (0,\h) coordinate[pos = 0.75] (B) node[anchor=south]{$i$};
    	\greendot(B);
    \end{braid}+\delta_{i,j}
    \begin{braid}
    	\draw (0,0) -- (0, \h) node[anchor=south]{$i$};
    	\draw (1,0) -- (1,\h) node[anchor=south]{$j$};
    \end{braid}.
\end{equation} 
Finally, the $e(\resi)$ form a set of mutually orthogonal idempotents.
\begin{equation} \label{E:idempotentrelations}
    e(\resi)e(\resj)= \delta_{\resi, \resj} e(\resi), \qquad \sum_{\resi \in I^n} e(\resi) = 1.
\end{equation}
While the sum above is not finite if $e = \infty$, it does become finite in the cyclotomic quotient described below, as $e(\resi) = 0$ for all but finitely many values under this quotient.

Quiver Hecke algebras can be given a grading as follows.
\begin{equation} \label{E:grading}
    \deg e(\resi) = 0, \qquad \deg y_r = 2, \qquad \deg \psi_re(\resi)=-a_{i_r, i_{r+1}}.
\end{equation}

Considering $\mathfrak{S}_n$ as a Coxeter group, let $w = s_{i_1}s_{i_2}\dots s_{i_m}$ be a word in $\mathfrak{S}_n$. That is, an element of the free group generated by the same Coxeter generators as the symmetric group. We use the notation $\psi_w = \psi_{i_1} \psi_{i_2} \dots \psi_{i_m}$.

To take a \emph{cyclotomic quotient} we first fix a positive integer $\ell$ (the level) and $\kappa = (\kappa_1, \dots, \kappa_{\ell}) \in I^\ell$. We then have $(\Lambda(\kappa), \alpha_i) = \#\{\kappa_j \mid \kappa_j = i\}$. We denote by $R_{n,e}^\Lambda$ the cyclotomic quotient of $R_{n,e}$ with dominant weight $\Lambda = \Lambda(\kappa)$. It is the quotient of $R_{n,e}$ by the relations $y_1^{(\Lambda, \alpha_{i_1})}e(\resi) = 0$ for all $\pmb{i} \in I^n$.

It was shown by Brundan and Kleshchev~\cite{bkisom} that, over a field, all integral cyclotomic Hecke algebras of type A are isomorphic to a cyclotomic quiver Hecke algebra. Since quiver Hecke algebras are graded, this isomorphism endows cyclotomic Hecke algebras with a non-trivial graded structure. The parameter $e$ is defined differently for the two algebras, but it is preserved under the isomorphism. It is called the \emph{quantum characteristic}. Brundan and Kleshchev's result justifies the notation $\mathcal{H}_{n,e}^\Lambda$ for cyclotomic Hecke algebras, as it proves that all cyclotomic Hecke algebras over a field with parameters $n, e$ and $\Lambda$ are all isomorphic to the same cyclotomic quiver Hecke algebra $R_{n, e}^\Lambda$, and hence they are all isomorphic to each other.

Given a single diagram $u \in R_{n,e}^\Lambda$, we assign positive integers to each string in increasing order from left to right at the bottom of the diagram as in~\cref{E:generators}, so that every string is labeled by a number in $\{1, \dots, n\}$. When we refer to string $s$ we mean the string labeled by the positive integer $s$. This labelling provides the set of strings with a total order, as strings can be compared through their integer labels.
\subsection{Specht modules} \label{SS:Specht}
Graded lifts of Specht modules for $\mathcal{H}_{n,e}^\Lambda$ were constructed by Brundan, Kleshchev and Wang~\cite{bkw11}. Kleshchev, Mathas and Ram~\cite{kmr} gave a description of Specht modules in terms of a single generator and homogeneous relations.

The Specht module $S^\blam$ is generated by $v_{\t^\blam} \in S^\blam$. The module $S^\blam$ is drawn by placing $v_{\t^\blam}$ at the top of an $R_{n,e}^\Lambda$-diagram. We associate the top of the diagram with the nodes of the Young diagram of $\blam$ in order, according to the initial tableau $\t^\blam$. Each string is given a residue according to the residue of the node it reaches. An \emph{$i$-string} is a string reaching a node of residue $i$.

For each tableau $\t$, there is a unique permutation $\sigma_\t \in \mathfrak{S}_n$ such that $\t = \t^\blam  \sigma_\t$. For each $\t \in \Std(\blam)$, we fix a reduced word $w_\t$ for the permutation $\sigma_\t$. This choice can be made in any number of ways. Given a tableau $\t \in \Std(\blam)$, a \emph{$321$-pattern} is any triple of natural numbers $i < j < k$ such that $\t^{-1}(i) > \t^{-1}(j) > \t^{-1}(k)$. With an eye towards \cref{T:MainStraight}, we choose $w_\t$ so that for each $321$-pattern in $\t$ where $\t^{-1}(i), \t^{-1}(j), \t^{-1}(k)$ have residues $i, i \pm 1, i$ respectively, the diagram for $w_\t$ looks as follows when ignoring all other strings.

$$
\begin{braid}
	\def\h{3}
    	\draw (0,0) -- (2, \h);
    	\draw (2,0) -- (0,\h);
    	\draw (1, 0) -- (0, \h/2) -- (1, \h);
\end{braid}
$$

For $\t \in \Std(\blam)$, we let $v_\t =  v_{\t^\blam} \psi_{w_\t}$. The elements $v_\t$ are well-defined up to a choice of reduced words $w_\t$. In fact, it is easy to see that the choice we have made is enough to determine $v_\t$. In the diagrams for Specht modules, we will draw brown boxes at the top of the diagram around nodes which are in the same row of $\blam$. According to \cite{kmr}, we need to quotient by the simple Specht relations
\begin{equation}\label{E:Specht}
    \def\h{3}
    \begin{braid}
    	\draw (0,0) -- (1, \h);
    	\draw (1,0) -- (0,\h);
        \draw[brown] (-1.7, \h) rectangle (2.7, \h + 1);
    \end{braid}
    = \quad 0 \qquad \text{and} \qquad
    \def\h{3}
    \begin{braid}
    	\draw (0,0) -- (0, \h) coordinate[pos=0.5] (C);
        \draw[brown] (-1.7, \h) rectangle (1.7, \h+1);
        \greendot(C);
    \end{braid}
    = \quad 0.
\end{equation}
Define $\resi^\blam$ to be the sequence of residues $\left(\res \left((\t^\blam)^{-1}(1)\right), \dots, \res \left((\t^\blam)^{-1}(n)\right) \right) \in I^n$. We also need to quotient by the relation
\begin{equation} \label{E:ResidueRelation}
v_{\t^\blam} e(\resi) = \delta_{\resi, \resi^\blam} v_{\t^\blam}
\end{equation}
and by the homogeneous Garnir relations, which are in general more complicated and significantly less intuitive in this graphical notation. However, in this text we will only need the simplest case, where 
\begin{equation} \label{E:Garnir}
\begin{braid}
	\def\a{7.5};
	\def\dashpart{0.9}
	
	\draw(0, 0) -- (0, \h) ;
	\draw (2, 0) node{$\cdots$};
	\draw(4, 0) -- (4, \h) ;
	\draw (2.5, 0);
	
	\draw (5, 0) -- (0.5+\a, \h) ;
	\draw (6.5, 0) node{$\cdots$};
	\draw (\a + 0.5, 0) -- (3.5 + \a, \h) ;

	\draw (1.5 + \a, 0) -- (5, \h);
	\draw (2.5+\a,0) node{$\cdots$};
	\draw (3.5 + \a, 0) -- (7, \h);
	
	\draw(\a + 4.5, 0) -- (\a+4.5, \h) ;
	\draw (\a + 5.5, 0) node{$\cdots$};
	\draw(\a + 6.5, 0) -- (\a + 6.5, \h) ;
	
	\draw[brown] (-0.5 , \h) rectangle (\a - 0.1, \h + 1);
    \draw[brown] (\a+0.1 , \h) rectangle (2*\a - 0.1, \h + 1);
	
	\draw (\a/2, \h+1) node[anchor=south]{$\lambda^{(m)}_r$};
	\draw (\a + 3.5, \h+1) node[anchor=south]{$\lambda^{(m)}_{r+1}$};
	\draw(\a+2.5,-0.4)node{$\underbrace{\hspace*{10mm}}_{<e}$};
    	\draw(8,-1.4)node{$\underbrace{\hspace*{25mm}}_{\lambda^{(m)}_r+1}$};

\end{braid} \quad = \quad 0.
\end{equation}

The Specht module $S^\blam$ has a homogeneous basis given by $\{v_\t \mid \t \in \Std(\blam)\}$ \cite[Section 3.4]{kmr}, where $\deg(v_\t) = \deg(\t)$. We refer to elements $v \in S^\blam$ which are given by acting on the Specht generator $v_{\t^\blam}$ by a monomial in elements $\psi_1, \dots, \psi_{n-1}, y_1, \dots, y_n$ simply as \emph{monomials}. Given a monomial $v \in S^\blam$, we write $\sigma_v$ for the permutation that the strings in $v$ form, and we let $\t_v = {\t^\blam} \sigma_v$. Suppose $v$ is given as $v_{\t^\blam}\psi_{w}$, where $w$ is a reduced word for a permutation $\sigma_w$ such that $\t^\blam \sigma_w$ is standard. Then we say that $v \in S^\blam$ is a \emph{standard monomial}. In particular, the standard basis given by $\{v_\t \mid \t \in \Std(\blam)\}$ is composed of standard monomials.

Sometimes it is convenient to apply Specht relations only for some of the nodes of a diagram. For instance, consider the following diagram.
$$\begin{braid}
	\def\h{12};
	\def\sep{2};
	\def\a{5+\sep};
	\def\dashpart{0.9};
	\draw (0,0) -- (0, 2);
	\draw (2, 0) node{$\dots$};
	\draw (4,0) -- (4,2);
	\draw[black, rounded corners = false] (-0.3,2) rectangle (4.3, 3.3);
	\draw[black] (2, 2.6) node[font = \fontsize{8}{10}\selectfont]{$B$};
	\draw (4, 3.3) -- (4, 9);
	\draw (2.5, 3.3) -- (2.5, 9);
	
	\draw( 0, 3.3) -- (0, 5.3);
	\draw(1.5, 3.3) -- (1.5, 5.3);
	\draw( 0, 6.6) -- (0, 9);
	\draw(1.5, 6.6) -- (1.5, 9);
	
	\draw (-2.5, 0) -- (-2.5, 5.3);
	\draw (-1, 0) -- (-1, 5.3);
	\draw(-2.5, 6.6) -- (-2.5, 9);
	\draw(-1, 6.6) -- (-1, 9);

	\draw (-1.75, 0) node{$\dots$};
	\draw[black, rounded corners = false] (-2.8,5.3) rectangle (1.8, 6.6);
	\draw[black] (-0.5, 5.9) node[font = \fontsize{8}{10}\selectfont]{$A$};

	\draw[brown] (-2.8, 9) rectangle (-0.7, 10);
	\draw (-1.75, 9.5);
	\draw[brown] (-0.3, 9) rectangle (1.8, 10);
	\draw (0.75, 9.5);
	\draw[brown] (2.2, 9) rectangle (4.3, 10);
	\draw (3.25, 9.5);
\end{braid}$$
Here $A$ and $B$ represent some crossings between and some dots on the involved strings. Sometimes we want to focus on $B$, perform some operations on it by applying the appropriate relations, and keep $A$ arbitrary. Continuously drawing $A$ can be cumbersome, so we want to stop drawing this part of the diagram. Specht relations may be applied only to the strings coming out of $B$ that do not enter $A$. We draw this as a \emph{mixed} Specht module diagram. 
$$\begin{braid}
	\def\h{12};
	\def\sep{2};
	\def\a{5+\sep};
	\def\dashpart{0.9};
	\draw (0,0) -- (0, 2);
	\draw (2, 0) node{$\dots$};
	\draw (4,0) -- (4,2);
	\draw[black, rounded corners = false] (-0.3,2) rectangle (4.3, 3.3);
	\draw[black] (2, 2.6) node[font = \fontsize{8}{10}\selectfont]{$B$};
	\draw (4, 3.3) -- (4, 6);
	\draw (2.5, 3.3) -- (2.5, 6);
	
	\draw( 0, 3.3) -- (0, 6);
	\draw(1.5, 3.3) -- (1.5, 6);

	\draw[brown] (2.2, 6) rectangle (4.3, 7);
\end{braid}$$
We will use mixed diagrams like the above extensively throughout the text.

\section{Stubborn Strings} \label{S:Stubborn}

\subsection{Accessible Nodes}
Stubborn strings were introduced in \cite{lm14}. Lyle and Mathas observed the following: given a monomial $v \in S^\blam$, it is possible to find its decomposition in terms of basis elements by applying quiver Hecke and Specht relations \emph{only in the direction where crossings are broken}. In this sense, no new crossings are created. Crossings are either preserved or undone after the application of a relation, but never created anew. Moreover, $i$ and $j$-strings never switch places, so that an $i$-string can only reach $i$-nodes. We fix a multipartition $\blam$ for the rest of the section. 

When a relation gives a summand where some $(i, i)$-crossing is no longer a crossing, we say that the relation \emph{cuts} that crossing. For instance, \cref{E:DotSlide,E:Braid} cut crossings. We also say that a dot cuts a crossing in \cref{E:DotSlide} and a string cuts a crossing in \cref{E:Braid}. Since $(i,i)$-crossings may only be cut as in
$\begin{braid}
        \draw (0,0) -- (1, \h/3);
        \draw (1,0) -- (0, \h/3);
    \end{braid}
    \rightarrow
    \begin{braid}
        \draw (0,0) -- (0.5, \h/6) -- (0, \h/3);
        \draw (1,0) -- (0.5, \h/6) -- (1, \h/3);
    \end{braid}$ and never as in $    \begin{braid}
        \draw (0,0) -- (1, \h/3);
        \draw (1,0) -- (0, \h/3);
\end{braid}
\rightarrow
\begin{braid}
    \draw (0,\h/3) -- (0.5, \h/6)-- (1, \h/3);
    \draw (0,0) -- (0.5, \h/6)-- (1, 0);
\end{braid}$, we can infer a restriction on the nodes at the top of the diagram which a string may reach after cutting some $(i,i)$-crossings, as follows: start at the bottom of string $s$ in the diagram for $v$ and continuously move upward. At every $(i, i)$-crossing, make a choice to continue on the current string or move on to the other string in the crossing, and continue moving towards the top of the diagram. If we color all possible paths from the bottom of string $s$ to the top of the diagram, we obtain a ``tree diagram''. We call the set of nodes we can reach by this procedure the \emph{accessible nodes} of string $s$ in monomial $v$, and denote this set by $A_v(s)$. Let $Y_{n,e}^\Lambda \subset R_{n,e}^\Lambda$ be the $k$-subalgebra generated by $y_1, y_2, \dots, y_n$. Let $\mathcal{V}_v$ be the set of monomials which appear as summands inside $v Y_{n,e}^\Lambda$ by applying relations without creating any new crossings. In particular, for a monomial $v' \in S^\blam$ for which string $s$ reaches a node $N \notin A_v(s)$, we have $v' \notin \mathcal{V}_v$. Finally, $\mathcal{W}_v \subset \mathcal{V}_v$ is the subset of standard monomials inside $\mathcal{V}_v$. We often omit the subscript in $A_v(s), \mathcal{V}_v$ and $\mathcal{W}_v$ when it is clear from the context. 

\begin{defn}
    We say a string $s$ in a monomial $v \in S^\blam$ is \emph{stubborn} if it reaches the smallest node in $A(s)$. We say a string $s$ is \emph{co-stubborn} if it reaches the greatest node in $A(s)$. 
\end{defn}

Note that the definition above describes a weaker condition than the definition of stubborn strings in~\cite{lm14}. 

\begin{eg} Let $\blam = ((1), (1), (1), (1), (1))$ and $\kappa = (0, 0, 0, 0, 0)$. In the following diagram for an element $v \in S^{\blam}$, we have labeled the nodes of $\blam$ at the top of the diagram. Usually we label the residues of the nodes instead, but in this case it is not necessary, as all strings share the same residue.
\begin{equation}
\begin{braid}
	\def\h{5};
	\def\sep{2}
	\def\a{3};
	\def\dashpart{0.9}
	\def\hrect{1.15}
	
	\path (0,0) -- (2, \h) coordinate[pos = 0.78] (A);
	\draw[color=black, thin] (0,0) -- (A) -- (1, \h)  node[anchor=south, color = green!60!black!]{$N_2$};
	\draw [color=black, thin](2,0) -- (4, \h)  node[anchor=south, color = black]{$N_5$};
	\draw[color=black, thin](3, 0) -- (0, \h) node[anchor=south, color = green!60!black!]{$N_1$};
	\draw [color=black, thin](4,0) --(A) --  (2, \h)  node[anchor=south, color = green!60!black!]{$N_3$};
	\draw [color=green!60!black!](1,0) node[anchor = north]{$s$} -- (3, \h)  coordinate[pos = 0.4] (B) coordinate[pos = 0.585] (C) node[anchor=south, color = green!60!black!]{$N_4$} ;
	\draw [color=green!60!black!] (B) -- (0, \h) coordinate[pos =0.33] (D);
	\draw[color=green!60!black!] (C) -- (A) -- (2, \h);
	\draw[color=green!60!black!] (D) -- (A) -- (1, \h);
	\draw[brown] (-0.4, \h) rectangle (0.4, \h + 1);
	\draw[brown] (0.6, \h) rectangle (1.4, \h + 1);
	\draw[brown] (1.6, \h) rectangle (2.4, \h + 1);
	\draw[brown] (2.6, \h) rectangle (3.4, \h + 1);
	\draw[brown] (3.6, \h) rectangle (4.4, \h + 1);
\end{braid}
\end{equation}
\sloppy We have drawn the Specht module diagram with thin black lines, and marked the top with labels $N_1, N_2, N_3, N_4, N_5$ for five nodes. We have drawn the tree diagram for string $s$ with thicker green lines superposed over the Specht module diagram. We see that the tree reaches nodes $N_1, N_2, N_3, N_4$ in green, and therefore $A(s) = \{N_1, N_2, N_3, N_4\}$. Since $s$ reaches the largest node in $A(s)$, we see that $s$ is co-stubborn.
\end{eg}

Next we prove a few simple lemmas concerning stubborn strings. Recall that strings in a diagram are totally ordered through their integer labels.

\begin{lem} \label{L:Stubborn0}
Let $v \in S^\blam$ be a standard monomial, and let $s$ be a string of residue $i$ in $v$. Then $s$ is stubborn if and only if all $i$-strings crossing $s$ are smaller than $s$. 
\end{lem}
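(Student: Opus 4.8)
The plan is to analyze the tree diagram of the string $s$ directly, using the combinatorial description of accessible nodes. Recall that a standard monomial $v$ has the form $v_{\t^\blam}\psi_w$ for a reduced word $w$ with $\t = \t^\blam\sigma_w$ standard, so the strings of $v$ never create new crossings and each $i$-string only ever reaches $i$-nodes. Fix the string $s$ of residue $i$, and let $N = \t^{-1}(\text{position of }s)$ be the node it reaches. I want to identify the smallest node in $A_v(s)$ with the top endpoint of the leftmost branch of the tree diagram of $s$: moving upward from the bottom of $s$, at every $(i,i)$-crossing we may switch strings, and to reach a node as far left (i.e.\ as small, under the order given by $\t^\blam$) as possible we should switch whenever doing so moves us onto a string whose top endpoint is further left. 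Since $i$ and $j$-strings with $i \neq j$ never switch, the only crossings relevant to building the tree are $(i,i)$-crossings involving $s$ and, recursively, the strings reachable from $s$.

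First I would establish the ``if'' direction. Suppose every $i$-string crossing $s$ is smaller than $s$. I claim $A_v(s) = \{N\}$, or at least that $N$ is the smallest element. The key point is that because $v$ is a \emph{standard} monomial, whenever two strings cross, the one that starts further left ends further left — except this is exactly what fails at crossings, so I need to be careful. Let me instead argue as follows: consider the branches of the tree. The first time the path for $s$ reaches an $(i,i)$-crossing, the other string in that crossing is, by hypothesis, some $i$-string $s' < s$. But in a standard monomial $v_{\t^\blam}\psi_w$ with $w$ reduced, strings $s' < s$ that cross $s$ must reach nodes that are... here I need the precise interaction between the reduced word choice and the tableau order. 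The cleaner route: a string $s'<s$ crossing $s$ in a standard monomial reaches a node larger than $N$ (this is the standard fact that in $v_\t$, the string starting at position $a$ ends at position $\sigma_\t(a)$, and crossings $a<b$ with $\sigma_\t(a)>\sigma_\t(b)$ correspond to inversions). So every string we can switch onto from $s$ reaches a node $>N$, and inductively every node in $A_v(s)$ other than $N$ is $>N$. Hence $N$ is minimal and $s$ is stubborn.

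For the ``only if'' direction, suppose some $i$-string $s' > s$ crosses $s$. Then at that crossing we may switch from $s$ onto $s'$; since $s' > s$ and $s'$ crosses $s$, in the standard monomial $s'$ reaches a node \emph{smaller} than $N$ (again the inversion correspondence: $s < s'$ crossing means $\sigma_\t$ inverts them, so $\t$ places the entry for $s'$ before that for $s$, i.e.\ at a smaller node). Following the branch that stays on $s'$ upward then reaches a node $< N$, so $N$ is not the smallest element of $A_v(s)$ and $s$ is not stubborn. Combining the two directions gives the equivalence.

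\textbf{Main obstacle.} The delicate step is making precise the correspondence ``$s' < s$ crosses $s$ in the standard monomial $v_{\t^\blam}\psi_{w_\t}$ $\iff$ $s'$ reaches a node larger than the node of $s$'', and ensuring this survives the specific choice of reduced words $w_\t$ fixed in Section~\ref{SS:Specht} (in particular the prescribed $321$-pattern resolution). I expect one must track, for the chosen reduced word, which pairs of strings actually cross — a priori a reduced word for $\sigma_\t$ realizes exactly the inversions of $\sigma_\t$ as crossings, and $\sigma_\t$ inverts $(a,b)$ with $a<b$ precisely when $\t$ lists the $b$-th node before the $a$-th node in the order of $\t^\blam$. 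Translating ``node'' versus ``position/entry'' consistently, and confirming that no $(i,i)$-crossing is hidden or duplicated under the $321$-choice, is where the real care is needed; the tree-diagram bookkeeping itself is then routine.
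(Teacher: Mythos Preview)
Your approach is essentially the paper's: use that in a reduced-word diagram any two strings cross at most once, so an $i$-string $s'$ crossing $s$ starts on one side and ends on the other, hence $s'<s$ iff $s'$ reaches a node larger than $N$. The paper dispatches the ``only if'' direction exactly as you do and declares the converse ``clear''.

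Two comments. First, your worry in the ``Main obstacle'' is unfounded: the set of pairs of strings that cross in a reduced word diagram for $\sigma$ is \emph{exactly} the inversion set of $\sigma$, independent of which reduced word is chosen. The $321$-pattern convention in \cref{SS:Specht} only affects the \emph{order} in which certain crossings occur, never which pairs cross. So the correspondence ``$s'<s$ crosses $s$ $\Leftrightarrow$ $s'$ ends at a larger node'' is automatic.

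Second, your ``inductively'' in the ``if'' direction is not a valid induction as written: once you switch onto $s'$, the hypothesis (that all $i$-strings crossing the current string are smaller) need not hold for $s'$. The clean argument is geometric and uses only the no-double-crossings property: after switching onto $s'$ you are strictly to the right of $s$; any path in the tree that ends at a node $M<N$ must at some height be on an $i$-string $t$ that lies to the left of $s$ from that height up to the top. Then $t$ crosses $s$, so by hypothesis $t<s$; but $t$ starts to the left of $s$ and ends to the left of $s$ while crossing $s$, forcing $t$ to cross $s$ twice, contradicting reducedness. This is presumably what the paper means by ``the converse is clear''.
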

\begin{proof}
Suppose $s$ reaches node $N$ in $v$. For $s$ to be stubborn, all strings which cross $s$ must reach a larger node than $N$. Since $v$ is a standard monomial, strings cannot cross $s$ twice in $v$. Therefore, only strings which are smaller than $s$ may cross it. The converse is clear.
\end{proof}

\begin{lem} \label{L:Stubborn1}
Let $v \in S^\blam$ be a standard monomial. Suppose $s$ is a stubborn string in $v$ reaching node $N$. Then for all $v' \in \mathcal{V}_v$, the string reaching $N$ is smaller than or equal to $s$. 
\end{lem}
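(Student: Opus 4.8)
The plan is to argue by contradiction, tracking how the string reaching $N$ can only ever decrease (in the string order) as we apply relations in the crossing-breaking direction. Suppose $s$ is stubborn in $v$ and reaches node $N$, but there is some $v' \in \mathcal{V}_v$ in which $N$ is reached by a string $t$ with $t > s$. Since every element of $\mathcal{V}_v$ is obtained from $v$ by a finite sequence of applications of quiver Hecke and Specht relations, none of which create crossings, it suffices to understand one relation step: I would show that if $N$ is reached by string $u$ in a monomial $w \in \mathcal{V}_v$, then in any summand $w'$ produced by a single relation, $N$ is reached by a string $u' \le u$. Iterating this from $v$ (where $N$ is reached by $s$) down to $v'$ then forces the string reaching $N$ in $v'$ to be $\le s$, contradicting $t > s$.

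The key observation for the single-step claim is the structure of the relations that cut crossings, namely \cref{E:DotSlide} and \cref{E:Braid}, together with the Specht relations \cref{E:Specht} and \cref{E:Garnir}. A dot slide or a braid relation replaces a local configuration by a sum of configurations in which the two (or three) strings involved either keep the same endpoints at the top or have one crossing removed; crucially, when a crossing between two $i$-strings is cut, the two strings swap which top node they reach relative to the uncut picture, but — and this is the point — the lower-indexed of the two $i$-strings is the one that can be "pushed past" to reach the smaller node, so reaching a fixed node $N$ can only pass from a string $u$ to a \emph{smaller} string, never a larger one. I would make this precise using the accessible-node tree: the set of strings that can reach $N$ across all of $\mathcal{V}_v$ is contained in $\{ u : N \in A_v(u)\}$, and by the definition of accessibility (paths that only move to the other string at an $(i,i)$-crossing, combined with the fact that in a standard monomial strings cross at most once, cf. \cref{L:Stubborn0}), if $N \in A_v(u)$ and $u$ reaches a node $\ne N$ in $v$ then $u$ crosses the string reaching $N$ in $v$, hence $u$ is comparable to $s$; stubbornness of $s$ (via \cref{L:Stubborn0}, all $i$-strings crossing $s$ are smaller than $s$) then gives $u < s$. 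The Specht and Garnir relations only kill summands or leave the relevant strings untouched at the top, so they cannot introduce a larger string at $N$.

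Concretely I would proceed as follows. First, fix notation: let $i = \res(s)$, and recall that throughout $\mathcal{V}_v$ only $i$-strings can reach $i$-nodes, so only $i$-strings are relevant at $N$. Second, establish the single-step lemma: for $w \in \mathcal{V}_v$ with string $u$ reaching $N$, and $w'$ a summand obtained by one relation, the string reaching $N$ in $w'$ is $\le u$ — checking \cref{E:PsiSquared}, \cref{E:Braid}, \cref{E:DotSlide} case by case, and noting the isotopy relations and \cref{E:Specht}, \cref{E:Garnir} either preserve the top configuration at $N$ or delete the summand. Third, iterate: starting from $v$, where the string at $N$ is $s$, conclude inductively that in every $w \in \mathcal{V}_v$ the string at $N$ is $\le s$; apply this to $v'$. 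The main obstacle I anticipate is the bookkeeping in the braid relation \cref{E:Braid}: there the summand with the "straightened" middle string involves three strings, and I need to verify that the string ending at $N$ in that summand is still $\le$ the string ending at $N$ before, which requires carefully identifying which of the three strings is the relevant $i$-string and invoking that in a standard monomial these three strings form a $321$-pattern with the prescribed reduced-word shape, so the crossing pattern is exactly the controlled one and no larger string sneaks into position $N$.
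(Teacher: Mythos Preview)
Your proposal has a genuine gap in its key step. In the second paragraph you assert that if $N \in A_v(u)$ with $u \ne s$, then $u$ must cross $s$ in $v$, and then invoke \cref{L:Stubborn0} to conclude $u < s$. This implication is false: the tree path from $u$ to $N$ may reach $s$'s path via intermediate strings without $u$ ever directly crossing $s$. Concretely, take three $i$-strings $s_1 < s_2 < s_3$ with $s_1 \to N_3$, $s_2 \to N_1$, $s_3 \to N_2$ (so the crossings are $(s_1,s_2)$ below $(s_1,s_3)$). Here $s_3$ is stubborn reaching $N_2$, and $N_2 \in A_v(s_2)$ via the tree path $s_2 \rightsquigarrow s_1 \rightsquigarrow s_3$, yet $s_2$ does \emph{not} cross $s_3$ in $v$. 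The conclusion $s_2 < s_3$ still holds, but not for the reason you give. Your first paragraph's single-step monotonicity approach is likewise not justified: once you have cut one crossing, the resulting $w'$ is no longer a standard monomial, \cref{L:Stubborn0} no longer applies to the string now reaching $N$, and the case analysis of \cref{E:Braid} and \cref{E:DotSlide} does not obviously give what you need.

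The paper's proof avoids all of this with a single clean idea: reflect the diagram across a horizontal line via the involution $\omega$ on $R_{n,e}^\Lambda$. Under $\omega$, the string $s$ (from bottom label $s$ to top node $N$) becomes a string $\omega(s)$ from bottom label $N$ to top label $s$, and the characterization in \cref{L:Stubborn0} shows that $s$ stubborn in $v$ is equivalent to $\omega(s)$ co-stubborn in $\omega(v)$. The question ``which string reaches $N$ in $v'$'' then dualizes to ``which top endpoint does $\omega(s)$ reach in $\omega(v')$'', and co-stubbornness says this is at most the maximum of $A_{\omega(v)}(\omega(s))$, namely $s$. This is the missing idea in your argument: rather than tracking an unknown, changing string up to a fixed node $N$, track the fixed string from $N$ in the reflected picture.
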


\begin{proof}
Consider $v \in S^{\blam}$ as an element of $R_{n,e}^\Lambda$. There is an involution $\omega: R_{n,e}^{\Lambda} \rightarrow R_{n,e}^{\Lambda}$ that performs a reflection across a horizontal line through a diagram. It is clear from the previous lemma that string $s$ in $v \in R_{n,e}^\blam$ is stubborn if and only if its associated string $\omega(s)$ in $\omega(v)$ is co-stubborn. Therefore,  for any $\omega(v')$ with $v' \in \mathcal{V}$, $\omega(s)$ may only reach an element at the top of the diagram to the left of the element that it reaches in $\omega(v)$. Consequently, for any $v' \in \mathcal{V}$, only a string that is smaller than or equal to $s$ may reach node $N$.
\end{proof}

The following lemma concerns standard monomials whose underlying permutation is fully commutative (that is, $321$-avoiding). We refer to these as \emph{fully commutative standard monomials}.

\begin{lem} \label{L:Stubborn2}
Let $v \in S^\blam$ be a fully commutative standard monomial. Then all strings in $v$ are either stubborn or co-stubborn.
\end{lem}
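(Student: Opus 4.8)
The statement says: if $v\in S^\blam$ is a fully commutative (i.e.\ $321$-avoiding) standard monomial, then every string of $v$ is either stubborn or co-stubborn. My plan is to fix a string $s$ of residue $i$ reaching a node $N$, and to characterize which $i$-strings can cross $s$. By Lemma \ref{L:Stubborn0}, $s$ is stubborn iff every $i$-string crossing $s$ is smaller than $s$, and by the reflection argument of Lemma \ref{L:Stubborn1} (applying $\omega$), $s$ is co-stubborn iff every $i$-string crossing $s$ is larger than $s$. So it suffices to show: in a fully commutative standard monomial, the set of $i$-strings crossing a fixed $i$-string $s$ cannot contain both a string smaller than $s$ and a string larger than $s$. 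Equivalently, I want to rule out the configuration where some $i$-string $a < s$ crosses $s$ and some $i$-string $b > s$ crosses $s$.

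The key step is to argue that such a configuration forces a $321$-pattern. Suppose $a<s<b$, with $a$ and $b$ both $i$-strings crossing $s$. Since $v$ is a standard monomial, no two strings cross twice, so $a$ crosses $s$ exactly once and $b$ crosses $s$ exactly once. Consider the starting positions (at the bottom) and ending positions (at the top). Because $a<s$ as integer labels and $a$ crosses $s$, the string $a$ starts to the left of $s$ and ends to the right of $s$ — wait, I should be careful: the labels are the bottom positions, and since $a$ crosses $s$ once, at the top their order is reversed, so $a$ ends right of $s$. Similarly $b$ starts right of $s$ (as $b>s$) and ends left of $s$. Now look at the three strings $a$, $s$, $b$: at the bottom the left-to-right order is $a, s, b$; at the top it is (reading off the reversals) $s$ is between... actually $b$ ends left of $s$ and $a$ ends right of $s$, so at the top the order of $\{a,s,b\}$ restricted is $b, s, a$ — a full reversal of $a,s,b$. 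A triple of strands whose relative order is completely reversed between bottom and top must, in any reduced expression, contain a $321$-pattern (one can pick the subword of transpositions among these three strands; a complete reversal of three elements is the longest element of $\mathfrak{S}_3$, which is not fully commutative). This contradicts the hypothesis that $\sigma_v$ is $321$-avoiding. Hence no such configuration exists, so all $i$-strings crossing $s$ lie on one side of $s$, and $s$ is stubborn or co-stubborn.

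I would then assemble these pieces: invoke Lemma \ref{L:Stubborn0} for the stubborn criterion, its $\omega$-reflected analogue (already used inside the proof of Lemma \ref{L:Stubborn1}) for the co-stubborn criterion, and the $321$-avoidance argument above to conclude that the crossing $i$-strings are all smaller than $s$, or all larger than $s$. The main obstacle I anticipate is making the ``complete reversal of three strands forces a $321$-pattern'' step rigorous in the diagrammatic setting: I need to be careful that ``crossing once'' genuinely reverses the relative order of the pair, that passing to the sub-permutation on the three relevant positions is legitimate, and that a non-$321$-avoiding sub-permutation of $\sigma_v$ indeed contradicts full commutativity of $\sigma_v$ (this is the standard fact that $\sigma$ is fully commutative iff it is $321$-avoiding, and that $321$-avoidance is inherited by patterns — but here I am extracting a pattern on three values that map in the reversed order, which is exactly a $321$-occurrence). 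A secondary subtlety is bookkeeping whether the relevant comparison should be on bottom labels or top positions; I would phrase everything in terms of the permutation $\sigma_v$ and its one-line notation to avoid diagrammatic ambiguity, then translate back.
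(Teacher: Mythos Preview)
Your proposal is correct and follows essentially the same approach as the paper's proof: assume $s$ is neither stubborn nor co-stubborn, extract an $i$-string $a<s$ and an $i$-string $b>s$ both crossing $s$, observe that in a standard monomial each crosses $s$ exactly once so the top order of $\{a,s,b\}$ is the reverse of the bottom order, and conclude this is a $321$-pattern contradicting full commutativity. Your worries about rigor are unnecessary here, since in the paper ``fully commutative'' is \emph{defined} as $321$-avoiding, and the triple $a<s<b$ with $\t_v^{-1}(a)>\t_v^{-1}(s)>\t_v^{-1}(b)$ is literally a $321$-pattern by the paper's definition---no passage to sub-permutations or appeal to an equivalence theorem is required.
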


\begin{proof}
Suppose that string $s$ in the diagram for $v$ is neither stubborn nor co-stubborn. Then there must be two strings crossing it from opposite directions: a string $s_1$ smaller than $s$ crosses it from bottom-left to top-right, and a string $s_2$ greater than $s$ crosses it from bottom-right to top-left. Since $v$ is a standard monomial, strings may not intersect twice. Therefore, $s_2, s, s_1$ make a $321$-pattern, which is in contradiction with the assumption that the underlying permutation for $v$ is fully commutative.
\end{proof}

\begin{lem} \label{L:Stubborn3}
Let $v \in S^\blam$ be a standard monomial. Suppose $s$ is a stubborn string with residue $i$ in $v$ and reaches node $N \in [\blam]$. Let $v' \in \mathcal{V}$ be any diagram where at least one of the $(i, i)$-crossings of $s$ has been cut. Then $N \notin A_{v'}(s)$.
\end{lem}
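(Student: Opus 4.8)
The plan is to compare the tree diagram of $s$ in $v$ with the tree diagram of $s$ in $v'$. By definition, $N$ is accessible for $s$ in $v'$ precisely when there is an upward path from the bottom of string $s$ in $v'$ that, at each $(i,i)$-crossing, either stays on the current string or switches, and terminates at $N$ at the top. Since $v'$ is obtained from $v$ by applying relations only in the crossing-breaking direction (no new crossings are created), the crossing diagram of $v'$ is, combinatorially, a ``sub-pattern'' of that of $v$: every crossing in $v'$ was already present in $v$, and at least one $(i,i)$-crossing on $s$ in $v$ is \emph{absent} in $v'$. So any accessible path for $s$ in $v'$ lifts to an accessible path for $s$ in $v$ that, additionally, is forbidden from switching at that particular cut crossing. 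I would formalize this by choosing the cut crossing to be the \emph{first} (lowest, say) $(i,i)$-crossing on $s$ that is cut, call it $X$, between $s$ and some $i$-string $t$.

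Next, I would use stubbornness of $s$ together with \cref{L:Stubborn0}. Since $s$ is stubborn, every $i$-string crossing $s$ in $v$ is smaller than $s$; in particular $t < s$. Now trace the accessible paths of $s$ in $v'$: before reaching the height of $X$, such a path is a path on string $s$ (or strings reachable from it via cut-or-uncut crossings below $X$) — the key point being that below $X$, in $v'$, string $s$ has no $(i,i)$-crossing with any string $\geq s$ still in place on that side, because $s$ was stubborn. More precisely, in $v$ all crossings of $s$ are with smaller $i$-strings, so any path from $s$'s bottom in $v'$ can only move to strings $\leq s$ along the way, i.e. $A_{v'}(s) \subseteq A_v(s')$ for strings $s' \leq s$; and since the crossing $X$ with $t$ has been removed in $v'$, the path is blocked from using $X$ to get back onto the segment of $s$ (or $t$) that reaches $N$. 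Since $N = A_v(s)$'s realized node and $s$ is stubborn, $N$ is the \emph{smallest} node in $A_v(s)$; I want to show no accessible path of $s$ in $v'$ lands on $N$.

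The cleanest way to close this is probably the symmetric argument used in \cref{L:Stubborn1}: apply the flip involution $\omega$, under which $s$ stubborn becomes $\omega(s)$ co-stubborn, and ``cutting a crossing of $s$'' becomes ``cutting a crossing of $\omega(s)$ from above.'' Then the statement becomes: a co-stubborn string which has lost one of its $(i,i)$-crossings can no longer reach the node it originally reached — because a co-stubborn string reaches the \emph{largest} node of $A(s)$, and that node is realized only by a path that uses every one of its $(i,i)$-crossings to keep climbing rightward past the strings that overtake it; removing any one such crossing forces the path strictly to the left, hence to a strictly smaller node. Combining this with \cref{L:Stubborn1} (which already tells us that in any $v' \in \mathcal{V}$ only a string $\leq s$ reaches $N$, with equality forcing $s$ itself to reach $N$ with all its structure intact), the cut crossing obstructs exactly the unique path that realized $N$, so $N \notin A_{v'}(s)$.

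The main obstacle I anticipate is making precise the claim that the node $N$ reached by a stubborn string is realized by a \emph{unique} accessible path, or at least that every accessible path realizing $N$ must traverse the cut crossing $X$ — this requires a careful planarity/non-crossing-twice argument (using that $v'$, and hence its crossing pattern, still has no string crossing $s$ twice, since $v' \in \mathcal{V}$ consists of monomials obtained without creating crossings and \cref{L:Stubborn0}-type reasoning applies). Once that uniqueness-of-path (or ``every realizing path uses $X$'') claim is nailed down, the conclusion is immediate. I would be careful to phrase the induction on the number of cut crossings, or simply reduce to a single cut crossing by monotonicity of accessibility ($A_{v''}(s) \subseteq A_{v'}(s)$ whenever $v''$ has strictly more crossings of $s$ cut than $v'$), so that it suffices to treat the case where exactly one $(i,i)$-crossing of $s$ is removed.
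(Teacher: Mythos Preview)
Your proposal has the right ingredients but overcomplicates matters and does not close the key step. The paper's proof is considerably more direct: by \cref{L:Stubborn0}, every $i$-string crossing $s$ in $v$ is smaller than $s$, and since $v$ is a standard monomial each such string crosses $s$ exactly once, from bottom-left to top-right. The actual trajectory of $s$ in $v$ is therefore the leftmost accessible path, with endpoint $N$. Now take any accessible path from the bottom of $s$ in $v'$. At each uncut $(i,i)$-crossing of $s$ the two options are ``stay on the original $s$-trajectory (go left)'' or ``switch (go right)''; at the lowest cut crossing the path is forced right. Once the path is strictly to the right of the original $s$-trajectory it can never return, because no $i$-string crosses $s$ from bottom-right to top-left in $v$, and no new crossings are created in $v'$. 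Hence no accessible path in $v'$ ends at $N$.

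Your detour through the involution $\omega$ is unnecessary, and your appeal to \cref{L:Stubborn1} is misplaced: that lemma concerns which string actually reaches $N$ in $v'$, not whether $N$ lies in $A_{v'}(s)$. The ``obstacle'' you flag (that every path realizing $N$ must traverse the cut crossing) is genuine in your framing, but the paper dissolves it by arguing directly that the cut forces the path to the right and planarity prevents any return. That is precisely the ``non-crossing-twice'' argument you gesture at in the last paragraph without carrying it through. The reduction to a single cut crossing is likewise unnecessary once you argue via the lowest cut.
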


\begin{proof}
By \cref{L:Stubborn0}, only smaller strings than $s$ may cross $s$ in $v$. If they do, then they must cross $s$ only once from bottom-left to top-right, as $v$ is a standard monomial. To reach $N$, string $s$ must travel a path through its tree diagram to reach $N$. 

Let $v' \in \mathcal{V}$. At each crossing of its own tree diagram, $s$ may continue to the left or right, depending on whether the crossing has been cut to obtain diagram $v'$ from diagram $v$. At the lowest $(i, i)$-crossing of $s$ in $v$ which has been cut, $s$ veers to the right in the diagram for $v'$. It is impossible for string $s$ to return to its original course in $v$, as it is situated to the right-hand side of the path that $s$ takes in $v$, and no $i$-string intersects $s$ from bottom-right to upper-left in $v$.
\end{proof}

The following lemma is clear from the definition of stubborn strings.
\begin{lem} 
Let $v \in S^\blam$ be a monomial, and suppose a string $s$ in the diagram for $v$ is stubborn. Then $v = \sum c_\t v_\t$, where the sum is over all standard $\blam$-tableaux and $c_\t \neq 0$ only if $\t^{-1}(s)$ is a node larger than or equal to $\t_v^{-1}(s)$.
\end{lem}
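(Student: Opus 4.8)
The plan is to decompose $v$ into the standard basis by the crossing-breaking procedure described at the start of \cref{S:Stubborn} and to track where string $s$ can land. The key observation is that, by definition, $s$ being stubborn means $s$ reaches the smallest node in $A_v(s)$, the set of accessible nodes of $s$ in $v$. Since every relation used in the decomposition only cuts crossings and never creates new ones, and since $i$-strings never swap with $j$-strings for $i \neq j$, any summand $v'$ obtained in the process has the property that string $s$ in $v'$ still reaches some node of $A_v(s)$.

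Concretely, I would argue as follows. Write $v = \sum_\t c_\t v_\t$ with the sum over $\Std(\blam)$, obtained by repeatedly applying quiver Hecke and Specht relations in the crossing-breaking direction; this is possible because the $v_\t$ form a basis of $S^\blam$ and the relations express $v$ in terms of them without ever increasing the number of crossings. Fix a standard tableau $\t$ with $c_\t \neq 0$. Then $v_\t$ appears (up to scalar) as one of the monomial summands produced in the decomposition, i.e. $v_\t \in \mathcal{V}_v$ — more precisely $v_\t \in \mathcal{W}_v$. Now in $v_\t$, string $s$ reaches the node $\t^{-1}(s)$, and since $v_\t \in \mathcal{V}_v$, this node lies in $A_v(s)$. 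Because $s$ is stubborn in $v$, the node it reaches in $v$, namely $\t_v^{-1}(s)$, is the \emph{smallest} element of $A_v(s)$. Hence $\t^{-1}(s) \geq \t_v^{-1}(s)$ in the order on nodes, which is exactly the claim.

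The one point that needs a little care — and which I expect to be the main (minor) obstacle — is justifying that the crossing-breaking decomposition of $v$ actually stays within $\mathcal{V}_v$, i.e. that one never needs to apply a relation in the direction that creates a crossing in order to reach the standard basis. This is the content of the Lyle--Mathas observation recalled at the opening of the section (the relations \cref{E:PsiSquared}, \cref{E:Braid}, \cref{E:DotSlide}, \cref{E:Garnir} and the Specht relations \cref{E:Specht} all, when read in the reducing direction, only cut crossings), together with the fact that the resulting monomials, once no further crossing can be cut and no dot survives, are standard monomials and hence lie in $\mathcal{W}_v \subset \mathcal{V}_v$. Granting that, the inclusion $v_\t \in \mathcal{V}_v$ for each $\t$ with $c_\t \neq 0$ follows, and then the definition of accessibility together with stubbornness of $s$ gives $\t^{-1}(s) \geq \t_v^{-1}(s)$ immediately. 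Everything else is bookkeeping with the order on nodes induced by $\t^\blam$.
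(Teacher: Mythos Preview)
Your proposal is correct and is precisely the argument the paper has in mind; indeed, the paper does not even write out a proof, stating only that the lemma ``is clear from the definition of stubborn strings.'' Your unpacking of this---that any standard monomial $v_\t$ with $c_\t \neq 0$ lies in $\mathcal{W}_v \subset \mathcal{V}_v$, so that $\t^{-1}(s) \in A_v(s)$, and then stubbornness gives $\t^{-1}(s) \geq \t_v^{-1}(s)$---is exactly the intended reasoning, and your identification of the Lyle--Mathas crossing-breaking observation as the one point requiring care is appropriate.
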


The next two definitions describe stronger conditions than stubbornness.

\begin{defn}
    We say a string $s$ in a monomial $v \in S^\blam$ is \emph{strongly (co-)stubborn} if $s$ is (co-)stubborn in all $v' \in \mathcal{V}_v$.
\end{defn}

\begin{defn}
We say that a string $s$ of residue $i$ in monomial $v \in S^\blam$ is \emph{immobile} if, for any monomial $v' \in \mathcal{V}$ where an $(i, i)$-crossing of string $s$ has been cut, we have $v' = 0$. We say that a node is immobile if the string reaching it is immobile. 
\end{defn}

In particular, strings which do not cross other strings with the same residue are immobile, as the condition above is vacuously true.

Given a subset $\mathfrak{A}$ of the strings of a particular residue in a monomial $v$, we will need a definition for $A(\mathfrak{A})$, the accessible sets of nodes for strings in $\mathfrak{A}$. We start with an example.

\begin{eg}
	Let $v \in S^\blam$ be the diagram below, where $\blam = ((1),(1),(1),(1),(1))$ and all strings share the same residue.
	\begin{equation}
	\begin{braid}
	\def\h{5};
	\def\sep{2}
	\def\a{3};
	\def\dashpart{0.9}
	\def\hrect{1.15}
	\path (0,0) -- (2, \h) coordinate[pos = 0.78] (A);
	\draw (0,0) node[anchor = north]{$s_1$} -- (A) -- (1, \h)  node[anchor=south, color = black]{$N_2$};
	\draw (1,0) node[anchor = north]{$s_2$} -- (3, \h)  node[anchor=south, color = black]{$N_4$} ;
	\draw (2,0) node[anchor=north]{$s_3$} -- (4, \h)  node[anchor=south, color = black]{$N_5$};
	\draw(3, 0) node[anchor = north]{$s_4$} -- (0, \h) node[anchor=south, color = black]{$N_1$};
	\draw (4,0) node[anchor = north]{$s_5$} --(A) --  (2, \h)  node[anchor=south, color = black]{$N_3$};
	\draw[brown] (-0.4, \h) rectangle (0.4, \h + 1);
	\draw[brown] (0.6, \h) rectangle (1.4, \h + 1);
	\draw[brown] (1.6, \h) rectangle (2.4, \h + 1);
	\draw[brown] (2.6, \h) rectangle (3.4, \h + 1);
	\draw[brown] (3.6, \h) rectangle (4.4, \h + 1);
	\end{braid}
	\end{equation}
	Then strings $s_1, s_2$ and $s_3$ are all co-stubborn, while $s_4$ and $s_5$ are stubborn. Moreover, $s_1$ is strongly co-stubborn, while $s_5$ is strongly stubborn. 

	Next we look at the accessible nodes of $s_2$ and $s_3$. We have $A(s_2) = \{N_1, N_2, N_3, N_4\}$ and $A(s_3) = \{N_1, N_2, N_3, N_4, N_5\}$, but note that $\{s_1,s_2\}$ can never reach the set $\{N_1, N_2\}$ in a diagram in $\mathcal{V}_v$. To explain the reason, we draw a thin red line through the middle of the diagram.
	\begin{equation}
	\begin{braid}
	\def\h{5};
	\def\sep{2}
	\def\a{3};
	\def\dashpart{0.9}
	\def\hrect{1.15}
	\path (6,0) -- (1, \h) coordinate[pos = 0.56] (A);
	\draw (0,0) -- (1, \h) node[anchor=south, color = black]{$N_2$};
	\draw (3,0) node[anchor = north]{$s_2$}-- (5, \h) node[anchor=south, color = black]{$N_4$};
	\draw (4,0) node[anchor = north]{$s_3$} -- (6, \h) node[anchor=south, color = black]{$N_5$};
	\draw(5, 0) -- (0, \h) node[anchor=south, color = black]{$N_1$};
	\draw (6,0) --(A) --  (4, \h) node[anchor=south, color = black]{$N_3$};
	\draw[thin, color=red] (2, 0) -- (2, \h);
	\draw[brown] (-0.4, \h) rectangle (0.4, \h + 1);
	\draw[brown] (0.6, \h) rectangle (1.4, \h + 1);
	\draw[brown] (3.6, \h) rectangle (4.4, \h + 1);
	\draw[brown] (4.6, \h) rectangle (5.4, \h + 1);
	\draw[brown] (5.6, \h) rectangle (6.4, \h + 1);
\end{braid}
	\end{equation}
	Consider the graph with vertices given by all $(i, i)$-crossings, all origins of $i$-strings at the bottom of the diagram, and all $i$-nodes at the top of the diagram; and edges given by the segments of $i$-strings connecting the vertices in the diagram above. Designate the starting points of strings $s_2$ and $s_3$ as sources and nodes $N_1, N_2$ as sinks. Then it is possible to cut off the sources from the sinks by cutting a single edge. By the max-flow/min-cut theorem, the maximal flow from $s_2, s_3$ to $N_1, N_2$ is at most $1$, so that only one of $s_2$ or $s_3$ may reach nodes $N_1, N_2$ in a single diagram. Therefore, it makes sense to define $A(\{s_2,s_3\})$ in such a way that  $\{N_1, N_2\} \notin A(\{s_2, s_3\})$.
\end{eg}

The argument above motivates the following definition and can easily be modified to prove the lemma below it.

\begin{defn} \label{D:AccessibleSets}
Let $v \in S^\blam$ be a monomial, $\mathfrak{A}$ be a set of $m$ strings in $v$ with the same residue. Define the graph of residue $i$ associated with the diagram of $v$ as in the example above. Then $A(\mathfrak{A})$ is the set of all $\mathcal{N} \subset [\blam]$ of size $m$ such that the maximal flow from the start of the strings of $\mathfrak{A}$ to the nodes in $\mathcal{N}$ in diagram $v$ is $m$.
\end{defn}

\begin{lem} \label{L:AccessibleSets}
Let $v \in S^\blam$ be a monomial and let $\mathfrak{A}$ be a set of strings in $v$ with the same residue. Let $v' \in \mathcal{V}_v$, and let $\mathcal{N}$ be the set of nodes reached by strings $\mathfrak{A}$ in $v'$. Then $\mathcal{N} \in A(\mathfrak{A})$.
\end{lem}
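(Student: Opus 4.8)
The plan is to adapt the max-flow/min-cut reasoning used for a single string in \cref{L:Stubborn3} and sketched in the preceding example to the case of a set of strings. First I would recall the setup: fix the monomial $v$, let $\mathfrak{A}$ be a set of $m$ strings of residue $i$, and build the directed graph $G$ of residue $i$ from the diagram of $v$ as in \cref{D:AccessibleSets}, with vertices the $(i,i)$-crossings, the bottom endpoints of $i$-strings and the top $i$-nodes, and with edges the $i$-string segments oriented upward. The key observation is that passing from $v$ to a summand $v' \in \mathcal{V}_v$ only ever \emph{cuts} $(i,i)$-crossings (by the discussion opening \cref{S:Stubborn} and the fact that relations are applied only in the crossing-breaking direction), and cutting a crossing corresponds, at the level of $G$, to rerouting: each of the two incoming segments at that crossing is matched to the segment directly above it on the same side rather than being allowed to switch. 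Thus the strings of $v'$ trace out $m$ vertex-disjoint (indeed edge-disjoint) directed paths in $G$ from the $m$ sources (bottoms of strings in $\mathfrak{A}$) to the $m$ nodes of $\mathcal{N}$.

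The main step is then purely graph-theoretic. Orient and attach a super-source $\widetilde s$ with an edge of capacity $1$ to each source of $\mathfrak{A}$, and a super-sink $\widetilde t$ with an edge of capacity $1$ from each node of $\mathcal{N}$; give every internal edge capacity $1$ as well. Because the $m$ paths traced by the strings of $v'$ are edge-disjoint, they furnish an integral flow of value $m$ from $\widetilde s$ to $\widetilde t$. Hence the maximum flow from the sources of $\mathfrak{A}$ to $\mathcal{N}$ in $v$ is at least $m$; since there are only $m$ sources each of capacity $1$, it is exactly $m$. By \cref{D:AccessibleSets} this says precisely that $\mathcal{N} \in A(\mathfrak{A})$, which is what we want.

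The one point that needs a little care — and which I expect to be the main (though minor) obstacle — is checking that the rerouting induced by cutting a crossing really does produce \emph{edge-disjoint} paths, i.e. that one cannot, by a sequence of relation applications, have two strings of $v'$ traverse the same segment of $G$. This follows because in $v$ itself the $i$-strings are honest strings of a braid-like diagram: distinct strings occupy distinct segments, and a cut at a crossing only reconnects the four incident segments in the non-crossing way, so the partition of the edge set of $G$ into strings is preserved (the number of strings and their residues are invariants of all elements of $\mathcal{V}_v$, as noted at the start of \cref{S:Stubborn}). Consequently the $m$ strings of $v'$ restricted to residue $i$ remain pairwise edge-disjoint, and one can formalize this by induction on the number of cuts needed to pass from $v$ to $v'$, the base case being $v' = v$ and each inductive step replacing a single crossing by its uncrossed reconnection. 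Once this is in place the flow argument goes through verbatim, mirroring the example, and the lemma is proved.
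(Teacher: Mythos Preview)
Your proposal is correct and is precisely the argument the paper has in mind: the paper does not give a separate proof of this lemma but simply states that the max-flow/min-cut argument from the preceding example ``can easily be modified'' to prove it, and your write-up carries out exactly that modification. The only additional care you take---checking that cutting crossings yields edge-disjoint paths, via induction on the number of cuts---is the natural way to make the implicit step explicit.
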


We introduce a partial order between sets of nodes of the same size. Let $\mathcal{N}, \mathcal{M} \subset [\blam]$ be two sets of nodes such that $|\mathcal{N}|=|\mathcal{M}|$. Then we write $\mathcal{N} \succ \mathcal{M}$ if the greatest node of $\mathcal{N}$ is greater than the greatest node of $\mathcal{M}$.

We define two notions of stubbornness for sets of strings.

\begin{defn}
	Let $v\in S^\blam$ be a monomial. Let $\mathfrak{A}$ be a set of strings with residue $i$ reaching the set of nodes $\mathcal{N}$ in $v$. Then we say that $\mathfrak{A}$ is \emph{stubborn} if $\mathcal{N}$ is minimal in $A(\mathfrak{A})$. We say that $\mathfrak{A}$ is \emph{strongly stubborn} if $\mathfrak{A}$ is stubborn for all $v' \in \mathcal{V}_v$.
\end{defn}

\begin{eg}
	Consider the following diagram, where all strings are assumed to be of the same residue.
	\begin{equation*}
	v =
	\begin{braid}
	\def\h{5};
	\def\sep{2}
	\def\a{2+\sep};
	\def\dashpart{0.9}
	\def\hrect{1.15}
	
	\draw (0,0) node[anchor=north]{$l_1$} -- (\a, \h) node[anchor=south]{$L_1$};
	\draw (\sep/2, 0) node{$\cdots$};
	\draw (\a + \sep/2, \h, 0) node{$\cdots$};
	\draw (\sep, 0) node[anchor=north]{$l_n$}-- (\a + \sep, \h) node[anchor=south]{$L_n$};
	\draw (\a, 0) node[anchor=north]{$r_1$} -- (0, \h) node[anchor=south]{$R_1$};
	\draw (\a + \sep/2, 0) node{$\cdots$};
	\draw (\sep/2, \h, 0) node{$\cdots$};
	\draw (\a + \sep, 0) node[anchor=north]{$r_m$} -- (\sep, \h) node[anchor=south]{$R_m$};
	\end{braid}	
	\end{equation*}
	Write $\mathcal{L} = \{l_1, \dots, l_n\}$ and $\mathcal{R} = \{r_1, \dots, r_m\}$. Note that all strings in $\mathcal{L}$ are co-stubborn and all strings in $\mathcal{R}$ are stubborn. Then we claim that $\mathcal{R}$ is strongly stubborn. 
	
	Let $v' \in \mathcal{V}_v$. Let $l_x$ be the rightmost string in $\mathcal{L}$ such that at least one of its crossings in $v$ has been cut to reach diagram $v'$. Then we argue that the rightmost node reached by a string in $\mathcal{R}$ is $L_x$. Indeed, after we cut some crossing for string $l_x$, $L_x$ is no longer in $A(l_x)$. Therefore, some other string $s$ must reach node $L_x$ in $v'$. All strings $l_y > l_x$ reach $L_y$, so we must show that $s \in \mathcal{R}$. If we assume the contrary, then $s \in \mathcal{L}$, and moreover $s > l_x$ because all the strings in $\mathcal{L}$ are co-stubborn. But this leads to a contradiction: by assumption, we have not cut any crossings for strings larger than $l_x$, so any string larger than $l_x$ cannot reach $L_x$ in $v'$.
	
	In a further diagram $v'' \in \mathcal{V}_{v'}$, the rightmost node reached by a string in $\mathcal{R}$ is either $L_x$ or a node $L_y > L_x$, where  $l_y$ is the rightmost string in $\mathcal{L}$ such that at least one of its crossings in $v'$ has been cut to reach diagram $v''$.  We conclude that $\mathcal{R}$ is strongly stubborn.
\end{eg}

The argument above can be used to prove the following more general lemma.

\begin{lem} \label{L:StronglyStubbornSet}
	Let $v \in S^\blam$ be a standard monomial. Let $\mathcal{R}$ be a set of strings in $v$, and denote the rest of the strings in the diagram by $\mathcal{L}$. Suppose any two strings $r \in \mathcal{R}$ and $l \in \mathcal{L}$ may only intersect if $r > l$. Suppose further that the strings in $\mathcal{L}$ do not intersect each other. Then $\mathcal{R}$ is strongly stubborn.
\end{lem}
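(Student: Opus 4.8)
The plan is to carry over and strengthen the argument from the example immediately preceding the statement. Since $\mathcal V_v$, the accessible sets $A(\cdot)$ and the stubbornness of a set of strings only depend on the strings of the common residue $i$ of $\mathcal R$, I would first discard all strings of residue $\neq i$ and assume every string of $v$ has residue $i$, with $\mathcal L$ then denoting the residue-$i$ strings outside $\mathcal R$. For $v'\in\mathcal V_v$ write $X(v')\subseteq\mathcal L$ for the set of strings of $\mathcal L$ that have had at least one crossing cut on the way from $v$ to $v'$, and for $l\in\mathcal L$ let $\phi(l)$ be the node that $l$ reaches in $v$. Two easy preliminary facts: every $l\in\mathcal L$ is co-stubborn in $v$ (it meets no other string of $\mathcal L$ and only strings of $\mathcal R$ larger than itself, so cutting any $(i,i)$-crossing of $l$ moves $l$ strictly leftward, whence $l$ already reaches the greatest node of $A_v(l)$), and the map $\phi$ is order-preserving because the strings of $\mathcal L$ are pairwise non-crossing in $v$. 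I would also record that $A_{v'}(\mathcal R)\subseteq A_v(\mathcal R)$ for $v'\in\mathcal V_v$: resolving a crossing only removes routing options, so every flow in the residue-$i$ graph of Definition~\ref{D:AccessibleSets} for $v'$ is one for $v$.

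With this in hand I would prove, by induction on the number of crossings of $v'$ (base case $v'=v$), the \textbf{Claim}: the greatest node reached by $\mathcal R$ in $v'$ equals $\max\{\phi(l):l\in X(v')\}$ when $X(v')\neq\varnothing$, and the greatest node reached by $\mathcal R$ in $v$ otherwise. In the inductive step, cutting an $(i,i)$-crossing replaces two strands by their bounced versions, swapping their portions above the crossing. A crossing between two strings of $\mathcal R$ (or of $\mathcal L$) leaves the \emph{set} of nodes reached by $\mathcal R$ unchanged --- in the $\mathcal R$--$\mathcal R$ case the two new $\mathcal R$-strands simply exchange destinations; the $\mathcal L$--$\mathcal L$ case does not touch $\mathcal R$ --- so the Claim persists provided the bookkeeping of $X$ is set up consistently. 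A crossing between $r\in\mathcal R$ and $l\in\mathcal L$ (necessarily $r>l$) makes the strand rooted at position $r$ acquire the node $\phi$ previously reached by $l$; the heart of the matter is that $\phi$ cannot be reached by an $\mathcal L$-rooted strand in the new diagram --- by co-stubbornness and order-preservation such a strand would either have to move rightward, or be larger than $l$ and hence outside $X$ and still at its own distinct destination --- so $\phi$ remains reached by $\mathcal R$, one checks no $\mathcal R$-strand overshoots it, and $\max\{\phi(l):l\in X\}$ is updated correctly.

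Granting the Claim, strong stubbornness follows quickly. Fix $v'\in\mathcal V_v$ and let $\mathcal N'$ be the node set reached by $\mathcal R$ there; I must show $\mathcal N'$ is $\preceq$-minimal in $A_{v'}(\mathcal R)$. Given $\mathcal M\in A_{v'}(\mathcal R)$, by realizability of flows (the converse of \cref{L:AccessibleSets}: an $|\mathcal R|$-valued flow in the residue-$i$ graph is realized by an actual sequence of cuts) there is $v''\in\mathcal V_{v'}\subseteq\mathcal V_v$ in which $\mathcal R$ reaches $\mathcal M$; since $v''$ has undergone at least the cuts producing $v'$, we get $X(v'')\supseteq X(v')$, so the Claim applied to $v''$ and $v'$ together with order-preservation of $\phi$ gives $\max\mathcal M\geq\max\mathcal N'$. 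Hence $\mathcal N'$ is minimal in $A_{v'}(\mathcal R)$, which is exactly what strong stubbornness of $\mathcal R$ asks for.

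The main obstacle is the Claim, and within it the $\mathcal R$--$\mathcal L$ case of the inductive step: after several cuts the strands rooted at $\mathcal R$- and $\mathcal L$-positions are concatenations of old $\mathcal R$- and $\mathcal L$-pieces, so the clean picture ``$\mathcal L$ rigid, $\mathcal R$ sliding one way past it'' is no longer literally true, and the inductive hypothesis must be formulated so that co-stubbornness of the as-yet-untouched $\mathcal L$-strings, order-preservation of their destinations, and the one-directional motion of $\mathcal R$ past $\mathcal L$ all survive each individual cut --- in particular when a cut later creates an $\mathcal L$-position/$\mathcal L$-position crossing. I also expect to need the realizability statement used above; it is implicit in the setup of this section but is not isolated as a lemma in the excerpt, so I would state and prove it (via Menger's theorem, translating $|\mathcal R|$ edge-disjoint paths back into a sequence of relation applications that only break crossings).
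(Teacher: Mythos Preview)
Your approach is essentially the paper's: the paper gives no proof beyond ``the argument above can be used,'' referring to the example that tracks the rightmost $\mathcal L$-string whose crossings have been cut, and your Claim is exactly a formalisation of that argument.

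One point deserves comment. You invoke a converse to Lemma~\ref{L:AccessibleSets} (``realizability of flows'') to pass from the Claim to minimality in $A_{v'}(\mathcal R)$, and you are right that this is not stated in the paper. The paper's example has the same gap: it argues about what nodes $\mathcal R$ reaches in diagrams $v''\in\mathcal V_{v'}$, but stubbornness is defined via max-flow in the graph of $v'$, not via reachability under further cuts. Your proposal to prove realizability via Menger is reasonable; alternatively you can avoid it with a direct min-cut argument. Once the Claim gives $\max\mathcal N'=\phi(l_x)$ for $l_x$ the rightmost element of $X(v')$, observe that in the residue-$i$ graph of $v'$ every $l\in\mathcal L$ with $l>l_x$ still traces its original path (none of its crossings has been cut), and these paths are pairwise non-crossing. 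A vertical line immediately left of $\phi(l_x)$ therefore meets the graph of $v'$ in exactly as many edges as there are bottom endpoints to its left, namely the $\mathcal L$-strings $l<l_x$ together with those $\mathcal R$-strings already reaching nodes $<\phi(l_x)$ in $v$; this is strictly fewer than $|\mathcal R|$ plus the number of nodes $<\phi(l_x)$ not in $\mathcal M$, which bounds the max-flow to any $\mathcal M$ with $\max\mathcal M<\phi(l_x)$ below $|\mathcal R|$. Either route closes the gap; the paper leaves it implicit.

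Your caution about the inductive step when later cuts produce crossings between $\mathcal L$-rooted strands is well placed but manageable: the invariant to carry is that every $l\notin X(v')$ still follows its original path in $v'$ (since none of the crossings on that path has been resolved), which preserves both co-stubbornness of the untouched $\mathcal L$-strings and the order-preservation of $\phi$ restricted to them.
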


Stubborn sets of strings satisfy the following property, which generalizes \cref{L:Stubborn3} and can be proved in a similar manner.

\begin{lem} \label{L:Stubborn3Sets}
Let $v \in S^\blam$ be a standard monomial. Suppose $\mathfrak{A}$ is a stubborn set of $i$-strings in $v$ reaching the set of nodes $\mathcal{N} \subset [\blam]$. Let $v' \in \mathcal{V}$ be any diagram where at least one of the $(i, i)$-crossings of a string $s \in \mathfrak{A}$ with a string $s' \notin \mathfrak{A}$ has been cut. Then $\mathcal{N} \notin A_{v'}(\mathfrak{A})$.
\end{lem}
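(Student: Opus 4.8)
The plan is to mimic the proof of \cref{L:Stubborn3}, but tracking the collective position of the strings in $\mathfrak{A}$ rather than a single string. First I would set up the combinatorial picture: since $v$ is a standard monomial, any $(i,i)$-crossing involving a string $s \in \mathfrak{A}$ and a string $s' \notin \mathfrak{A}$ occurs exactly once, and by the definition of a stubborn set, $\mathcal{N}$ is minimal in $A(\mathfrak{A})$ under the order $\succ$ (together with the finer information that $\mathcal{N}$ is the actual set reached, not merely minimal in greatest element). Passing to $v' \in \mathcal{V}$ amounts, in the flow-graph language of \cref{D:AccessibleSets}, to deleting some of the edges of the residue-$i$ graph that correspond to cut crossings; the set of nodes reached by $\mathfrak{A}$ in $v'$ is determined by how the $|\mathfrak{A}|$ string-paths are rerouted through the surviving crossings.

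Next I would argue that cutting a crossing of $s \in \mathfrak{A}$ with $s' \notin \mathfrak{A}$ strictly increases the set reached by $\mathfrak{A}$ in the $\succ$-order, which immediately gives $\mathcal{N} \notin A_{v'}(\mathfrak{A})$ by minimality. The key observation, exactly as in \cref{L:Stubborn3}, is directional: the strings of $\mathfrak{A}$ collectively occupy the ``leftmost feasible'' positions, so any string $s'\notin\mathfrak{A}$ that crosses a string of $\mathfrak{A}$ must enter from the bottom-left and exit top-right (it cannot cross from bottom-right to top-left, else minimality of $\mathcal{N}$ would be violated — a smaller node configuration would be reachable). Therefore, when the lowest such crossing is cut, the string of $\mathfrak{A}$ that was on the left veers right onto $s'$'s strand and can never return to its original course, because nothing crosses it back from the right. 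Consequently at least one string of $\mathfrak{A}$ ends at a strictly larger node than in $v$, while the others end at nodes $\geq$ their positions in $v$; so the set of nodes reached by $\mathfrak{A}$ in $v'$ is $\succ$-strictly above $\mathcal{N}$, hence not equal to any element $\preceq \mathcal{N}$, and in particular $\mathcal{N}\notin A_{v'}(\mathfrak{A})$.

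The main obstacle is making the ``collective rerouting'' rigorous: unlike the single-string case, after cutting one crossing several strings of $\mathfrak{A}$ may be affected, and I must ensure the net effect on the \emph{set} $\mathcal{N}$ is a strict increase rather than a mere permutation of which string reaches which node. The cleanest route is probably the max-flow/min-cut formulation already used in the examples: show that after deleting the edge corresponding to the cut crossing, the min cut separating the sources $\mathfrak{A}$ from $\mathcal{N}$ drops below $|\mathfrak{A}|$ (the cut edge lies on every maximal collection of disjoint source-to-$\mathcal{N}$ paths, precisely because $\mathcal{N}$ was minimal and $s'\notin\mathfrak{A}$ supplied the only ``detour''), so the full flow can no longer be realized and $\mathcal{N}\notin A_{v'}(\mathfrak{A})$. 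One should double-check the edge case where $\mathfrak{A}$ has a string that does not participate in any cross-$\mathfrak{A}$ crossing, but such strings are unaffected and their target nodes stay fixed, so they do not interfere with the argument.
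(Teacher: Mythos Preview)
Your plan matches what the paper intends: it says only that the lemma ``generalizes \cref{L:Stubborn3} and can be proved in a similar manner,'' and you are proposing exactly that adaptation, with the max-flow formulation of \cref{D:AccessibleSets} as the rigorous vehicle.

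There is, however, a concrete gap in your directional argument. You claim that any $s' \notin \mathfrak{A}$ crossing some $s \in \mathfrak{A}$ must enter from bottom-left and exit top-right, because otherwise a smaller configuration would be reachable. But the order $\succ$ compares only the \emph{greatest} node of each set, so this reasoning fails whenever $s$ is not the string of $\mathfrak{A}$ reaching the maximum of $\mathcal{N}$. For instance, take three $i$-strings with $s_1 \to N_2$, $s_2 \to N_1$, $s_3 \to N_3$ (so $s_1$ and $s_2$ cross once and $s_3$ is straight) and set $\mathfrak{A} = \{s_1, s_3\}$. Then $\mathfrak{A}$ is stubborn, since every accessible set has maximum $N_3$; yet $s_2 \notin \mathfrak{A}$ crosses $s_1$ from bottom-right to top-left, and after cutting that crossing $\mathfrak{A}$ reaches $\{N_1, N_3\}$, which is not $\succ$-strictly above $\{N_2, N_3\}$. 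The lemma still holds in this example, but not for the reason you give. Your max-flow justification (``the cut edge lies on every maximal collection of disjoint paths because $s'$ supplied the only detour'') inherits the same flaw, since it leans on the directional picture. To close the gap you need an argument that does not presuppose which side $s'$ approaches from---for example, tracking how the cut re-pairs the four half-edges at the crossing and arguing directly, from planarity of the residue-$i$ graph, that no collection of $|\mathfrak{A}|$ vertex-disjoint paths from the $\mathfrak{A}$-sources to $\mathcal{N}$ can survive in the graph of $v'$.
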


\subsection{String activity}

Let $v \in S^\blam$ be a monomial, and suppose $s$ is a string in $v$ with residue $i$ such that all other strings have residue $j$ with $|i-j| \geq 2$. We refer to such strings as \emph{inert} strings. Then \cref{E:PsiSquared,E:Braid,E:DotSlide} simplify to

\begin{equation*} 
    \def\h{3}
 \begin{braid}
            \draw (0,0) node[anchor=north]{$s$} -- (1, \h/2) -- (0, \h) node[anchor=south]{$i$};
            \draw (1,0) -- (0, \h/2) -- (1, \h) node[anchor=south]{$j$};
        \end{braid}
        =
        \begin{braid}
            	\draw (0,0) node[anchor=north]{$s$} -- (0, \h) node[anchor=south]{$i$};
            	\draw (1,0) -- (1, \h) node[anchor=south]{$j$};
                \end{braid}
    \text{,} \quad 
        \begin{braid}
    	\draw (0,0)  -- (2, \h) node[anchor=south]{$k$};
    	\draw (2,0) node[anchor=north]{$s$} -- (0,\h) node[anchor=south]{$i$};
    	\draw (1, 0) -- (0, \h/2) -- (1, \h)node[anchor=south]{$j$};
\end{braid}
=
\begin{braid}
        	\draw (0,0)  -- (2, \h) node[anchor=south]{$k$};
        	\draw (2,0) node[anchor=north]{$s$} -- (0,\h) node[anchor=south]{$i$};
        	\draw (1, 0) -- (2, \h/2) -- (1, \h)node[anchor=south]{$j$};
        \end{braid}
        \text{,} \quad 
        \begin{braid}
    	\draw (0,0)  -- (2, \h) node[anchor=south]{$k$};
    	\draw (2,0)  -- (0,\h) node[anchor=south]{$j$};
    	\draw (1, 0) node[anchor=north]{$s$}-- (0, \h/2) -- (1, \h)node[anchor=south]{$i$};
\end{braid}
=
\begin{braid}
        	\draw (0,0)  -- (2, \h) node[anchor=south]{$k$};
        	\draw (2,0) -- (0,\h) node[anchor=south]{$j$};
        	\draw (1, 0) node[anchor=north]{$s$}  -- (2, \h/2) -- (1, \h)node[anchor=south]{$i$};
        \end{braid}
                \quad \text{and} \quad
                \begin{braid}
    	\draw (0,0)  -- (2, \h) coordinate[pos = 0.25] (B) node[anchor=south]{$j$};
    	\draw (2,0) node[anchor=north]{$s$} -- (0,\h) node[anchor=south]{$i$};
    	\greendot(B);
    \end{braid}
     = 
     \begin{braid}
    	\draw (0,0) -- (2, \h) coordinate[pos = 0.75] (B) node[anchor=south]{$j$};
    	\draw (2,0) node[anchor=north]{$s$} -- (0,\h) node[anchor=south]{$i$};
    	\greendot(B);
    \end{braid}.
\end{equation*} 

The above relations become simple isotopy relations when $s$ is removed from the diagrams:

\begin{equation*} 
    \def\h{3}
 \begin{braid}
            \draw (1,0) -- (0, \h/2) -- (1, \h) node[anchor=south]{$j$};
        \end{braid}
        =
        \begin{braid}
            	\draw (1,0) -- (1, \h) node[anchor=south]{$j$};
                \end{braid}
    \text{,} \quad 
        \begin{braid}
    	\draw (0,0)  -- (2, \h) node[anchor=south]{$k$};
    	\draw (1, 0) -- (0, \h/2) -- (1, \h)node[anchor=south]{$j$};
\end{braid}
=
\begin{braid}
        	\draw (0,0)  -- (2, \h) node[anchor=south]{$k$};
        	\draw (1, 0) -- (2, \h/2) -- (1, \h)node[anchor=south]{$j$};
        \end{braid}
        \text{,} \quad 
        \begin{braid}
    	\draw (0,0)  -- (2, \h) node[anchor=south]{$k$};
    	\draw (2,0)  -- (0,\h) node[anchor=south]{$j$};
\end{braid}
=
\begin{braid}
        	\draw (0,0)  -- (2, \h) node[anchor=south]{$k$};
        	\draw (2,0) -- (0,\h) node[anchor=south]{$j$};
        \end{braid}
                \quad \text{and} \quad
                \begin{braid}
    	\draw (0,0)  -- (2, \h) coordinate[pos = 0.25] (B) node[anchor=south]{$j$};
    	\greendot(B);
    \end{braid}
     = 
     \begin{braid}
    	\draw (0,0) -- (2, \h) coordinate[pos = 0.75] (B) node[anchor=south]{$j$};
    	\greendot(B);
    \end{braid}
\end{equation*} 

Since all relations involving $s$ are invariant when the string disappears, we may simply stop drawing $s$. The extra terms in \cref{E:Braid} and \cref{E:DotSlide} which do not appear for inert strings are called \emph{error terms}.

The following lemma is clear from the definitions.

\begin{lem} \label{L:ImmobileInactive}
Let $v \in S^\blam$ be a monomial, and suppose $s$ is an immobile string in $v$ with residue $i$. Then the relations
\begin{equation*} 
    \def\h{3}
        \begin{braid}
    	\draw (0,0)  -- (2, \h) node[anchor=south]{$k$};
    	\draw (2,0) node[anchor=north]{$s$} -- (0,\h) node[anchor=south]{$i$};
    	\draw (1, 0) -- (0, \h/2) -- (1, \h)node[anchor=south]{$j$};
\end{braid}
=
\begin{braid}
        	\draw (0,0)  -- (2, \h) node[anchor=south]{$k$};
        	\draw (2,0) node[anchor=north]{$s$} -- (0,\h) node[anchor=south]{$i$};
        	\draw (1, 0) -- (2, \h/2) -- (1, \h)node[anchor=south]{$j$};
        \end{braid}
                \quad \text{and} \quad
                \begin{braid}
    	\draw (0,0)  -- (2, \h) coordinate[pos = 0.25] (B) node[anchor=south]{$j$};
    	\draw (2,0) node[anchor=north]{$s$} -- (0,\h) node[anchor=south]{$i$};
    	\greendot(B);
    \end{braid}
     = 
     \begin{braid}
    	\draw (0,0) -- (2, \h) coordinate[pos = 0.75] (B) node[anchor=south]{$j$};
    	\draw (2,0) node[anchor=north]{$s$} -- (0,\h) node[anchor=south]{$i$};
    	\greendot(B);
    \end{braid}
\end{equation*} 
hold for string $s$.
\end{lem}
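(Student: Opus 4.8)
The plan is to reduce the statement directly to the definition of an immobile string by isolating, in each of the two quiver Hecke relations, the extra summand (the \emph{error term}) that makes \cref{E:Braid} and \cref{E:DotSlide} differ from the displayed identities, and then showing that this summand vanishes when evaluated inside $v$.

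First I would record precisely when error terms can arise: in \cref{E:Braid} an extra summand is produced only in the cases $i = k = j \pm 1$, and in \cref{E:DotSlide} only when $i = j$. In every one of these cases the string $s$, which has residue $i$, crosses a second string that also has residue $i$, so the left-hand side of the relation, viewed as a local piece of the diagram for $v$, contains an $(i,i)$-crossing of $s$; in the corresponding error term this crossing is resolved and the two $i$-strings run past one another without meeting. Consequently, when the relation is applied inside $v$, the error term is (a scalar multiple of) a monomial obtained from $v$ by cutting exactly that $(i,i)$-crossing of $s$, possibly together with isotopy moves involving the remaining string, whose residue is not $i$ and which therefore do not affect the $(i,i)$-crossings of $s$. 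In particular, in the error term the string $s$ reaches an $i$-node different from the one it reaches in $v$.

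Such a monomial lies in $\mathcal{V}_v$, because it appears as a summand of $v \in vY_{n,e}^\Lambda$ after applying relations that create no new crossings, and an $(i,i)$-crossing of $s$ has been cut in it; hence by the definition of immobility it is zero, so the error term vanishes and the displayed relations hold for $s$, exactly as for inert strings. The only delicate point — the main, if mild, obstacle — is the bookkeeping in the middle step: one must check carefully that in the error-term diagram the string still labelled $s$ is genuinely detached from the $i$-node it occupied in $v$, so that the passage from $v$ to the error term really does cut an $(i,i)$-crossing of $s$ rather than merely performing an isotopy or cutting a crossing between two other strings. Once this is confirmed, immobility applies verbatim and nothing further is needed.
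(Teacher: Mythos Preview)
Your proposal is correct and is exactly the argument the paper has in mind: the paper simply states that the lemma is ``clear from the definitions'' and gives no further proof. Your unpacking of this---that the only error terms in \cref{E:Braid} and \cref{E:DotSlide} occur precisely when the crossing being manipulated is an $(i,i)$-crossing of $s$, and that the error term is then a monomial in $\mathcal{V}_v$ in which that crossing has been cut, hence zero by immobility---is the intended reasoning.
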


In this sense, immobile strings are partially inactive: dots and strings can be pulled through their crossings without leaving error terms. On the other hand, they differ from inert strings, as \cref{E:PsiSquared} is not simplified, and sliding immobile strings past crossings may leave error terms. 

Sometimes we want to show that a certain monomial is non-zero, but carrying out a full decomposition into basis elements is computationally challenging. If the monomial features stubborn or co-stubborn strings, then the next lemma can be helpful.

\begin{lem} \label{L:ImmobileTrick}
Let $v \in S^\blam$ be a monomial, and suppose $s$ is a (co-)stubborn string in $v$. Substitute the usual relations involving $s$ for the relations in \cref{L:ImmobileInactive}, and compute a ``simplified'' decomposition of $v$ into basis elements by successively applying the usual quiver Hecke and Specht relations, except for this substitution. If the result is non-zero, then the actual decomposition of $v$ into basis elements is also non-zero. Moreover, if a basis element appears in the simplified decomposition, then it also appears in the actual decomposition with the same coefficient.
\end{lem}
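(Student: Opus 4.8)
The plan is to track, through every application of a relation, the difference between the ``actual'' computation (with the genuine quiver Hecke relations) and the ``simplified'' computation (where the relations of \cref{L:ImmobileInactive} replace the genuine slide/braid relations for the string $s$). The key observation is that the simplified computation produces a \emph{subset} of the terms of the actual computation: whenever a relation is applied, the simplified version either produces exactly the same summands, or it produces the same summands minus the error terms that involve $s$. So at every stage of the computation, the actual partial result equals the simplified partial result plus a correction which is a sum of diagrams in which string $s$ has had (at least) one of its crossings cut.

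First I would set up the bookkeeping. Assume without loss of generality that $s$ is stubborn (the co-stubborn case follows by applying the involution $\omega$ from the proof of \cref{L:Stubborn1}, under which stubborn strings become co-stubborn and the roles of the relevant relations are mirrored). Fix the sequence of relations used to reduce $v$ to the standard basis in the simplified computation, and apply the \emph{same} sequence of moves in the actual computation. I would prove by induction on the number of moves applied that the actual partial sum has the form $w + (\text{error})$, where $w$ is the simplified partial sum and ``error'' is a $k$-linear combination of monomials $v'$, each of which is reached from $v$ without creating new crossings and in each of which string $s$ has had one of its $(i,i)$-crossings cut. The inductive step is a case check on the relation being applied: \cref{E:PsiSquared} and \cref{E:idempotentrelations} and the Specht/Garnir relations are identical in both computations, so they act the same way on $w$ and on the error term; the slide and braid relations \cref{E:Braid}, \cref{E:DotSlide} applied to a crossing \emph{not} involving $s$ are also identical; and the only discrepancy is when such a relation is applied at a crossing involving $s$, in which case the actual computation produces an extra error term in which an $(i,i)$-crossing of $s$ has been cut. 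Crucially, once $s$ has a crossing cut, by \cref{L:Stubborn3} string $s$ can no longer return to node $N$, so the cut propagates — all descendants of that error summand still have $s$ diverted — and they therefore stay inside the ``error'' bucket under all subsequent moves.

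The payoff is then immediate. When the simplified computation terminates, $w$ is a linear combination of standard basis elements $v_\t$. The actual computation, after the same moves, gives $v = w + (\text{error})$, but the error is a combination of (not necessarily standard) monomials $v'$ in which $s$ reaches some node $N' \ne N$ with $N' \in A_v(s)$, hence $N' > N$ since $s$ is stubborn and $N$ is the \emph{smallest} node of $A_v(s)$. Continuing to reduce the error terms to the standard basis can only produce $v_\t$ with $\t^{-1}(s) > N$ (using the earlier lemma that a stubborn string forces $c_\t \ne 0$ only for tableaux placing $s$ at a node $\ge N$, applied to each error monomial, combined with the fact that the error monomials already have $s$ strictly beyond $N$). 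In particular no $v_\t$ with $\t^{-1}(s) = N$ can arise from the error. But every basis element $v_\t$ appearing in $w$ has $\t^{-1}(s) = N$, since the simplified relations never move $s$ off its original node (they are pure isotopy for $s$). Hence the coefficient of each such $v_\t$ in $v$ equals its coefficient in $w$, and if $w \ne 0$ then $v \ne 0$.

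The main obstacle I anticipate is making the inductive claim robust against the fact that the reduction algorithm is not canonical: a single relation can be applied at many places, the ``error terms'' branch the computation, and I need to be sure that applying ``the same sequence of moves'' in both computations is well-defined and that no move in the actual computation is ever \emph{blocked} or forced to differ except by spawning an $s$-cut error term. I would handle this by phrasing the argument at the level of the set $\mathcal{V}_v$ and the directed nature of the rewriting (relations only cut crossings, never create them, as recalled at the start of \cref{S:Stubborn}): this guarantees termination and guarantees that every monomial appearing anywhere in the actual computation lies in $\mathcal{V}_v$, so \cref{L:Stubborn3} and the stubbornness of $s$ can be invoked uniformly. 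A secondary subtlety is the interaction of \cref{E:PsiSquared} in the case $i = j \pm 1$, which replaces a crossing of $s$ with \emph{another} string of adjacent residue by a difference of dotted strings — but such a crossing is not an $(i,i)$-crossing of $s$, so it neither appears in the simplified substitution nor affects the accessible-node argument, and the two computations agree on it verbatim.
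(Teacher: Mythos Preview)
Your proposal is correct and follows essentially the same approach as the paper. The paper's proof is terser---it simply observes that the simplified relations discard precisely the error terms in which an $(i,i)$-crossing of $s$ has been cut, invokes \cref{L:Stubborn3} to conclude such terms can never contribute a basis element with $s$ at node $N$, and notes the simplified computation therefore captures exactly the $v_\t$ with $\t^{-1}(s)=N$---while you spell out the same argument as an explicit induction tracking ``actual $=$ simplified $+$ error'' through the rewriting.
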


\begin{proof}
Suppose string $s$ reaches node $N$ in $v$. Let us compute the terms in the decomposition of $v$ that feature string $s$ reaching node $N$. Let $v' \in \mathcal{V}$ be any monomial in $\mathcal{V}$ where string $s$ has been cut at least once. Then $N \notin A_{v'}(s)$. Therefore, no monomial where $s$ reaches $N$ can be obtained from $v'$, and we may discard $v'$ in our current computation.

On the other hand, by substituting the usual relations by those in \cref{L:ImmobileInactive} we are discarding exactly all terms $v'$ where string $s$ has been cut. It follows that we will obtain all terms in the decomposition of $v$ that feature string $s$ reaching node $N$. The consequences in the statement follow easily.
\end{proof}

If we apply the above lemma to a string, we may also say that we \emph{immobilize} that string. In computations, we draw both immobile and immobilized strings with green dashed lines. As long as we do not slide strings over green dashed lines, creating double crossings, and we do not slide crossings past green dashed lines, these lines can be ignored. If several green dashed lines are next to each other, we may bundle them together and draw them as a single line.

\begin{eg}
Let $v \in S^\blam$ be the diagram below, where $\blam = ((1),(1),(1),(1),(1),(1))$ and all strings have the same residue.
\begin{equation}
\begin{braid}
	\draw (0, 0) -- (3, \h) ;
	\draw (1, 0) -- (4, \h);
	\draw (2, 0) -- (5, \h) coordinate[pos=0.1] (A);
	\greendot(A);
	\draw (3, 0) -- (0, \h);
	\draw (4, 0) -- (1, \h);
	\draw (5, 0) -- (2, \h);
	\draw[brown] (-0.4, \h) rectangle (0.4, \h + 1);
	\draw[brown] (0.6, \h) rectangle (1.4, \h + 1);
	\draw[brown] (1.6, \h) rectangle (2.4, \h + 1);
	\draw[brown] (2.6, \h) rectangle (3.4, \h + 1);
	\draw[brown] (3.6, \h) rectangle (4.4, \h + 1);
	\draw[brown] (4.6, \h) rectangle (5.4, \h + 1);
\end{braid}
\end{equation}
Since the fourth and fifth strings are stubborn, we may immobilize them, and since they are right next to each other we draw them as a single string.
\begin{equation}
\begin{braid}
	\draw (0, 0) -- (3, \h) ;
	\draw (1, 0) -- (4, \h);
	\draw (2, 0) -- (5, \h) coordinate[pos=0.1] (A);
	\greendot(A);
	\draw[green!60!black!,dashed] (4, 0) -- (1, \h);
	\draw (5, 0) -- (2, \h);
	\draw[brown] (1.6, \h) rectangle (2.4, \h + 1);
	\draw[brown] (2.6, \h) rectangle (3.4, \h + 1);
	\draw[brown] (3.6, \h) rectangle (4.4, \h + 1);
	\draw[brown] (4.6, \h) rectangle (5.4, \h + 1);
\end{braid}
\end{equation}
Next, we may slide the green dot up through the third string. The resulting diagram when the dot reaches the top of the string equals zero due to \cref{E:Specht}, so the above simplifies to
\begin{equation}
-\begin{braid}
	\draw (0, 0) -- (3, \h) ;
	\draw (1, 0) -- (4, \h);
	\path (2, 0) -- (5, \h) coordinate[pos=0.1] (A) coordinate[pos=0.5] (B);
	\draw (2,0) -- (B) -- (2,\h);
	\draw[green!60!black!,dashed] (4, 0) -- (1, \h);
	\draw (5, 0) -- (B) -- (5, \h);
	\draw[brown] (1.6, \h) rectangle (2.4, \h + 1);
	\draw[brown] (2.6, \h) rectangle (3.4, \h + 1);
	\draw[brown] (3.6, \h) rectangle (4.4, \h + 1);
	\draw[brown] (4.6, \h) rectangle (5.4, \h + 1);
\end{braid}.
\end{equation}
By \cref{L:ImmobileTrick}, this term is guaranteed to appear in the full decomposition of the original diagram with the same multiplicity. 
\end{eg}

We are especially interested in the case where $v \in S^\blam$ is a fully commutative standard monomial. Then all strings of $v$ are stubborn or co-stubborn by \cref{L:Stubborn2}. Let $s$ be one such string. By the fully commutative condition, no string can cross $s$ twice in any monomial $v' \in \mathcal{V}_v$ where the crossings of $s$ have not been cut. In particular, the following holds.
\begin{lem} \label{L:ImmobileIgnore}
Suppose $v \in S^\blam$ is a fully commutative standard monomial. Let $s$ be an immobile or immobilized string in $v$. Then, $s$ does not intersect any string more than once in any non-zero monomial $v' \in \mathcal{V}_v$. 
\end{lem}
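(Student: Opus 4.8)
The plan is to reduce the statement to the fully commutative setting via the reflection involution $\omega$, and then argue by the max-flow/min-cut perspective introduced earlier in the section. First I would observe that by \cref{L:Stubborn2}, every string in a fully commutative standard monomial is stubborn or co-stubborn, so in particular the string $s$ is one of these two. Since $s$ is immobile or immobilized, the relevant fact is that no $(i,i)$-crossing of $s$ is ever cut in a non-zero member of $\mathcal{V}_v$: either this is automatic (immobile), or it is forced by \cref{L:ImmobileTrick}, under which the only terms we retain are exactly those where $s$ has not been cut.

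Next I would treat the two cases symmetrically. Suppose $s$ is stubborn; the co-stubborn case follows by applying $\omega$ as in the proof of \cref{L:Stubborn1}. Let $v' \in \mathcal{V}_v$ be non-zero and suppose, for contradiction, that some string $t$ crosses $s$ at least twice in $v'$. Since no $(i,i)$-crossing of $s$ is cut when passing from $v$ to $v'$, every crossing of $s$ in $v'$ is the image of a crossing of $s$ already present in $v$; so if $t$ crosses $s$ twice in $v'$ then the corresponding pair of strings already crossed twice in $v$. But $v$ is a standard monomial, so two strings cross at most once; hence $t$ must have residue $j$ with $|i-j|\ge 2$ and the ``crossings'' of $t$ past $s$ in $v'$ must have been produced by the error terms of \cref{E:Braid}. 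Here I would invoke the fully commutative hypothesis: by \cref{L:Stubborn0} only strings smaller than $s$ cross $s$ in $v$, and they do so from bottom-left to top-right; as in the proof of \cref{L:Stubborn3}, once any new crossing is introduced on the right-hand side of the path $s$ takes in $v$, the string $s$ cannot veer back across its original course, because no $i$-string meets $s$ from bottom-right to top-left. A second crossing with the same string would require exactly such a return, giving a $321$-pattern among $t, s$ and the relevant smaller string, contradicting full commutativity.

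The main obstacle I anticipate is bookkeeping the distinction between genuine $(i,i)$-crossings of $s$ and the spurious crossings that can appear through error terms in \cref{E:Braid} when $s$ passes a string of a nearby residue: one must make precise that ``$s$ does not intersect any string more than once'' is a statement about the geometric diagram $v'$, not just about $i$-strings, and show that error-term crossings cannot accumulate on $s$. I would handle this by noting that any such crossing moves $s$ strictly off the monotone path it followed in $v$ (in the direction consistent with stubbornness), so the sequence of crossings $s$ participates in is monotone and hence each partner string appears at most once. A clean way to organize this is to pass to $\omega(v)$ and use co-stubbornness of $\omega(s)$ together with the observation from \cref{L:Stubborn3} that $s$ cannot return to its original course; the fully commutative condition guarantees there is no partner on the ``wrong'' side to enable a return. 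Once these geometric facts are assembled, the lemma follows immediately.
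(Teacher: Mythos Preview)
Your sketch contains the right core observation but then veers into unnecessary territory based on a misunderstanding of the relations. The key fact, stated at the very beginning of \cref{S:Stubborn}, is that elements of $\mathcal{V}_v$ are obtained from $v$ by applying relations \emph{only in the direction where crossings are broken}: crossings are either preserved or undone, never created anew. This holds for \emph{all} crossings, regardless of residues. Consequently, for any $v' \in \mathcal{V}_v$ the multiset of crossing pairs in $v'$ is a sub-multiset of that in $v$; since $v$ is a standard monomial (hence a reduced word), no pair of strings crosses twice in $v$, and therefore no pair crosses twice in $v'$. The paper's proof is precisely this one-sentence observation immediately preceding the lemma.

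The implication you write --- ``since no $(i,i)$-crossing of $s$ is cut, every crossing of $s$ in $v'$ is the image of a crossing of $s$ already present in $v$'' --- is a non sequitur as stated: the hypothesis concerns only $(i,i)$-crossings, while the conclusion is about all crossings of $s$. The correct justification is simply the no-new-crossings principle above, and it does not rely on immobility at all. Your subsequent worry about ``spurious crossings that can appear through error terms in \cref{E:Braid}'' is therefore misplaced: the error diagrams in \cref{E:Braid} and \cref{E:DotSlide} have \emph{fewer} crossings than the main terms (the error term in \cref{E:Braid} has no crossings at all among the three strings involved). The scenario you spend the second half of the sketch ruling out --- a string $t$ of far residue acquiring extra crossings with $s$ via error terms --- simply cannot occur, and the appeals to \cref{L:Stubborn0}, \cref{L:Stubborn3}, the involution $\omega$, and $321$-patterns are all unnecessary here.
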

In this case,  there is no danger of creating double crossings when applying the relations in \cref{L:ImmobileInactive} to immobile or immobilized strings. Consequently, we may stop drawing those strings and perform our computations without considering them.

\section{Change of rings} \label{S:Degeneration}

Let $r$ be a prime number, $a \in \mathbb{Z}_{\geq 1}$, $e = r^a$ and let $\zeta$ be a primitive $e$-th root of unity. Let $M, N$ be $H_{n,\zeta}^\Lambda(\mathbb{Z}[\zeta])$-modules. Then $M_\mathbb{C} \coloneq \mathbb{C} \otimes_{\mathbb{Z}[\zeta]} M$ is an $H_{n,\zeta}^\Lambda(\mathbb{C})$-module, and according to~\cite[Theorem 2.38]{CR1}, we have
\begin{equation}
\Hom_{H_{n,\zeta}^\Lambda(\mathbb{C})}(M_\mathbb{C}, N_\mathbb{C}) \cong \mathbb{C} \otimes_{\mathbb{Z}[\zeta]}\Hom_{H_{n,\zeta}^\Lambda(\mathbb{Z}[\zeta])}(M, N).
\end{equation}
In particular, it is clear that $\Hom_{H_{n,\zeta}^\Lambda(\mathbb{C})}(M_\mathbb{C}, N_\mathbb{C}) \neq 0$ if and only if $\Hom_{H_{n,\zeta}^\Lambda(\mathbb{Z}[\zeta])}(M, N) \neq 0$.

Recall that the cyclotomic polynomial $\Phi_e(x)$ satisfies $\Phi_e(1) = r$, so that the map $\phi: \mathbb{Z}[\zeta] \twoheadrightarrow \mathbb{F}_{r}$ defined by $\phi(\zeta) = 1$ is a homomorphism of rings with kernel $\p = (1 - \zeta)$.

\begin{prop} \label{T:Degen1}
There exists a functor $\mathcal{F}: H_{n,\zeta}^\Lambda(\mathbb{Z}[\zeta])\text{-}\mmod \rightarrow H_{n,1}^\Lambda(\mathbb{F}_r)\text{-}\mmod$ defined as $M \mapsto M/\p M$, and $\mathcal{F}(\psi) \neq 0$ if and only if $\psi \in \Hom_{H_{n,\zeta}^\Lambda(\mathbb{Z}[\zeta])}(M, N) \setminus \p\Hom_{H_{n,\zeta}^\Lambda(\mathbb{Z}[\zeta])}(M, N)$. In particular, if $\Hom_{H_{n,\zeta}^\Lambda(\mathbb{Z}[\zeta])}(M, N) \neq \{0\}$ then it follows that $\Hom_{H_{n,1}^\Lambda(\mathbb{F}_r)}(\mathcal{F}(M), \mathcal{F}(N)) \neq \{0\}$.
\end{prop}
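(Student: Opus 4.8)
The plan is to construct $\mathcal{F}$ by base change along the surjection $\phi\colon \mathbb{Z}[\zeta]\twoheadrightarrow \mathbb{F}_r$, $\zeta\mapsto 1$, with kernel $\p=(1-\zeta)$, and then verify that this base change is compatible with the defining relations of the cyclotomic Hecke algebra so that it lands in $H_{n,1}^\Lambda(\mathbb{F}_r)\text{-}\mmod$. Concretely, for an $H_{n,\zeta}^\Lambda(\mathbb{Z}[\zeta])$-module $M$ I would set $\mathcal{F}(M)=M/\p M\cong \mathbb{F}_r\otimes_{\mathbb{Z}[\zeta]}M$, which carries an action of $\mathbb{F}_r\otimes_{\mathbb{Z}[\zeta]}H_{n,\zeta}^\Lambda(\mathbb{Z}[\zeta])$. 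The first step is to identify this base-changed algebra: since $H_{n,\zeta}^\Lambda(\mathbb{Z}[\zeta])$ is presented by generators $L_1,\dots,L_n,T_1,\dots,T_{n-1}$ and the relations listed in \cref{SS:CHA} with $v=\zeta$ and $Q_j=[\kappa_j]_\zeta$, applying $\phi$ to the structure constants sends $v=\zeta\mapsto 1$ and $Q_j=[\kappa_j]_\zeta\mapsto [\kappa_j]_1=\kappa_j$ (the quantum integer degenerating to the ordinary integer), which is precisely the parameter set defining $H_{n,1}^\Lambda(\mathbb{F}_r)$. Hence $\mathbb{F}_r\otimes_{\mathbb{Z}[\zeta]}H_{n,\zeta}^\Lambda(\mathbb{Z}[\zeta])\cong H_{n,1}^\Lambda(\mathbb{F}_r)$, so $\mathcal{F}(M)$ is naturally an $H_{n,1}^\Lambda(\mathbb{F}_r)$-module and $\mathcal{F}$ is a functor (it is $\mathbb{F}_r\otimes_{\mathbb{Z}[\zeta]}(-)$, hence well-defined on morphisms and right exact).

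Next I would analyze what $\mathcal{F}$ does to a homomorphism. Given $\psi\in\Hom_{H_{n,\zeta}^\Lambda(\mathbb{Z}[\zeta])}(M,N)$, the map $\mathcal{F}(\psi)\colon M/\p M\to N/\p M$ is the induced map on quotients. It is the zero map precisely when $\psi(M)\subseteq \p N$. The key point is that since $\p=(1-\zeta)$ is a principal ideal generated by a single element, $\psi(M)\subseteq\p N$ is equivalent to $\psi\in\p\cdot\Hom_{H_{n,\zeta}^\Lambda(\mathbb{Z}[\zeta])}(M,N)$; that is, $\psi=(1-\zeta)\psi'$ for some $\psi'$ in the Hom-group. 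The nontrivial direction is: if $\psi(M)\subseteq(1-\zeta)N$, then the rule $m\mapsto (1-\zeta)^{-1}\psi(m)$ defines an element $\psi'$ of $\Hom_{H_{n,\zeta}^\Lambda(\mathbb{Z}[\zeta])}(M,N)$ — this uses that $1-\zeta$ is a non-zero-divisor in $\mathbb{Z}[\zeta]$ (so division is unambiguous) and that $N$, being a module over the order $\mathbb{Z}[\zeta]$, has no $(1-\zeta)$-torsion in the relevant sense, so $\psi'$ is well-defined and $\mathbb{Z}[\zeta]$-linear and $H$-linear. This establishes the ``if and only if'' in the statement of the proposition.

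Finally, for the ``in particular'' clause: if $\Hom_{H_{n,\zeta}^\Lambda(\mathbb{Z}[\zeta])}(M,N)\neq\{0\}$, pick a nonzero $\psi$ in it. I would argue there is a choice of $\psi$ not lying in $\p\Hom_{H_{n,\zeta}^\Lambda(\mathbb{Z}[\zeta])}(M,N)$ by a ``clearing denominators'' argument: if the $\mathbb{Z}[\zeta]$-module $\Hom(M,N)$ is finitely generated (which it is, as $M,N$ are finitely generated over the Noetherian ring $\mathbb{Z}[\zeta]$ and $\Hom$ is then finitely generated), then by Nakayama-type reasoning with respect to the prime $\p$, not every element can be divisible by $1-\zeta$ unless $\Hom(M,N)_\p=0$; but $\Hom(M,N)$ is nonzero and torsion-free over the domain $\mathbb{Z}[\zeta]$, hence its localization at $\p$ is nonzero. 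So some $\psi\notin\p\Hom(M,N)$ exists, and then $\mathcal{F}(\psi)\neq 0$, giving $\Hom_{H_{n,1}^\Lambda(\mathbb{F}_r)}(\mathcal{F}(M),\mathcal{F}(N))\neq\{0\}$.

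The main obstacle I anticipate is the step showing $\psi(M)\subseteq\p N\iff \psi\in\p\Hom(M,N)$ cleanly — one must be careful that $N$ has no $(1-\zeta)$-torsion (it does, being torsion-free as a $\mathbb{Z}[\zeta]$-module when $M,N$ arise from Specht modules, which are free over $\mathbb{Z}[\zeta]$) and that the constructed $\psi'$ genuinely commutes with the $H$-action, which follows formally from $H$-linearity of $\psi$ and $\mathbb{Z}[\zeta]$-linearity. The compatibility of parameters under $\phi$, namely $[\kappa_j]_\zeta\mapsto\kappa_j$ and the Hecke quadratic relation $(T_r+\zeta^{-1})(T_r-\zeta)\mapsto (T_r+1)(T_r-1)$, is a routine check but is what makes the whole change-of-rings meaningful.
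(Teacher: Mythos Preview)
Your approach is essentially the same as the paper's: both reduce modulo $\p=(1-\zeta)$ and use that $\ker\phi=\p$ to factor a homomorphism killed by $\mathcal{F}$ as $(1-\zeta)$ times another homomorphism. The paper carries this out concretely by picking a matrix $\Psi$ for $\psi$ (so it implicitly assumes $M,N$ are free over $\mathbb{Z}[\zeta]$, the same hypothesis you isolate as torsion-freeness of $N$), observing that $\phi(\Psi)=0$ forces each entry to lie in $(1-\zeta)\mathbb{Z}[\zeta]$, and writing $\Psi=(1-\zeta)\Psi'$; your version is the module-theoretic rephrasing of exactly this division step. Your treatment is in fact more thorough in two places: you explicitly identify $\mathbb{F}_r\otimes_{\mathbb{Z}[\zeta]}H_{n,\zeta}^\Lambda(\mathbb{Z}[\zeta])\cong H_{n,1}^\Lambda(\mathbb{F}_r)$ via the specialization of parameters, and you supply the Nakayama/torsion-free argument for the ``in particular'' clause, which the paper leaves to the reader.
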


\begin{proof}
Let $\Psi$ be the matrix associated with $\psi$, considered as a linear transformation. It is clear that $\phi({\Psi})$, the matrix obtained by applying $\phi$ to each entry of $\Psi$, represents a homomorphism from $M/\p M$ to $N / \p N$, which confirms that $\mathcal{F}$ is a functor. 

It is clear that, if $\psi \in \p\Hom_{H_{n,\zeta}^\Lambda(\mathbb{Z}[\zeta])}(M, N)$, then $\mathcal{F}(\psi) = 0$. Now, suppose that $\mathcal{F}(\psi) = 0$, so that $\phi(\Psi) = 0$. We show that then $1 - \zeta$ divides all entries of $\Psi$. Let $x \in \mathbb{Z}[\zeta]$ be one such entry, so that $\phi(x) = 0$. Writing $x = \sum_{i = 0}^k a_i \zeta^i$, it follows that $\sum_{i=0}^k a_i$ is a multiple of $r$. But we also have $x = \left(\sum_{i=0}^k a_i \right) + \left( \sum_{i = 0}^k a_i(\zeta^i - 1) \right)$. The number $1 - \zeta$ clearly divides each summand in the second sum, and it divides the first sum by~\cite[4.38]{CR1}. Therefore $1 - \zeta$ divides $x$, and we see that $\psi \in \p\Hom_{H_{n,\zeta}^\Lambda(\mathbb{Z}[\zeta])}(M, N)$.
\end{proof}

We write $M_{\mathbb{F}_r} \coloneq \mathcal{F}(M) = M/\mathfrak{p}M$.

\begin{cor} \label{C:DegenHecke}
Let $M, N$ be $H_{n,\zeta}^\Lambda(\mathbb{Z}[\zeta])$-modules and suppose that $\Hom_{H_{n,\zeta}^\Lambda(\mathbb{C})}(M_\mathbb{C}, N_\mathbb{C}) \neq \{0\}$. Then $\Hom_{H_{n,1}^\Lambda(\mathbb{F}_r)}(M_{\mathbb{F}_r}, N_{\mathbb{F}_r}) \neq \{0\}$.
\end{cor}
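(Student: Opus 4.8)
The plan is to chain together the two results that immediately precede this corollary. The hypothesis $\Hom_{H_{n,\zeta}^\Lambda(\mathbb{C})}(M_{\mathbb{C}}, N_{\mathbb{C}}) \neq \{0\}$ is, by the displayed consequence of \cite[Theorem 2.38]{CR1} at the start of the section, equivalent to $\Hom_{H_{n,\zeta}^\Lambda(\mathbb{Z}[\zeta])}(M, N) \neq \{0\}$. So the first step is simply to invoke that equivalence to upgrade the $\mathbb{C}$-statement to a $\mathbb{Z}[\zeta]$-statement.

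Once we know $\Hom_{H_{n,\zeta}^\Lambda(\mathbb{Z}[\zeta])}(M, N) \neq \{0\}$, the second and final step is to apply the last sentence of \cref{T:Degen1} verbatim: it asserts precisely that a nonzero $\Hom$ over $\mathbb{Z}[\zeta]$ forces $\Hom_{H_{n,1}^\Lambda(\mathbb{F}_r)}(\mathcal{F}(M), \mathcal{F}(N)) \neq \{0\}$. Unwinding the notation $M_{\mathbb{F}_r} = \mathcal{F}(M) = M/\mathfrak{p}M$ and likewise for $N$ gives exactly the claimed conclusion $\Hom_{H_{n,1}^\Lambda(\mathbb{F}_r)}(M_{\mathbb{F}_r}, N_{\mathbb{F}_r}) \neq \{0\}$.

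There is essentially no obstacle here: the corollary is a formal consequence of combining the change-of-scalars isomorphism with \cref{T:Degen1}. The only thing worth a sentence of care is to note that \cref{T:Degen1} does not require the $\Hom$ group to be free or to survive reduction generically — it only needs it to be nonzero, since the functor $\mathcal{F}$ detects any homomorphism not lying in $\mathfrak{p}\Hom$, and over the Dedekind (indeed PID, localized at $\mathfrak p$) domain $\mathbb{Z}[\zeta]$ a nonzero submodule of a finitely generated module cannot be entirely contained in $\mathfrak{p}\Hom$ by Nakayama. Thus the proof is: combine the opening display with \cref{T:Degen1}.

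\begin{proof}
By the isomorphism displayed at the beginning of this section, which follows from \cite[Theorem 2.38]{CR1}, the assumption $\Hom_{H_{n,\zeta}^\Lambda(\mathbb{C})}(M_{\mathbb{C}}, N_{\mathbb{C}}) \neq \{0\}$ is equivalent to $\Hom_{H_{n,\zeta}^\Lambda(\mathbb{Z}[\zeta])}(M, N) \neq \{0\}$. Applying the last assertion of \cref{T:Degen1} to a nonzero element of this $\Hom$ group yields $\Hom_{H_{n,1}^\Lambda(\mathbb{F}_r)}(\mathcal{F}(M), \mathcal{F}(N)) \neq \{0\}$. Since $\mathcal{F}(M) = M/\mathfrak{p}M = M_{\mathbb{F}_r}$ and $\mathcal{F}(N) = N_{\mathbb{F}_r}$, this is the desired conclusion.
\end{proof}
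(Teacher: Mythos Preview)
Your proof is correct and follows exactly the approach intended by the paper: the corollary is stated without proof there, as an immediate consequence of combining the equivalence displayed just before \cref{T:Degen1} with the final sentence of \cref{T:Degen1}. Your added remark about Nakayama (ensuring a nonzero $\Hom$ group is not entirely contained in $\mathfrak{p}\Hom$) is a welcome bit of care that the paper leaves implicit in the ``In particular'' clause of \cref{T:Degen1}.
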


The Brundan--Kleshchev isomorphism yields an analogous result for Specht modules of cyclotomic quiver Hecke algebras.
\begin{cor} \label{C:Degen}
Let $r$ be a prime number, $a \in \mathbb{Z}_{\geq 1}$ and $e = r^a$. Let $M, N$ be $R_{n,e}^\Lambda(\mathbb{C})$-modules and suppose that $\Hom_{R_{n,e}^\Lambda(\mathbb{C})}(M, N) \neq  \{0\}$. Then $\Hom_{R_{n,r}^\Lambda(\mathbb{F}_r)}(M_{\mathbb{F}_r}, N_{\mathbb{F}_r}) \neq \{0\}$. \end{cor}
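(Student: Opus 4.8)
The plan is to reduce Corollary \ref{C:Degen} to Corollary \ref{C:DegenHecke} via the Brundan--Kleshchev isomorphism theorem. First I would observe that, for $e = r^a$, the cyclotomic quiver Hecke algebra $R_{n,e}^\Lambda(\mathbb{C})$ is isomorphic to the integral cyclotomic Hecke algebra $\mathcal{H}_{n,\zeta}^\Lambda(\mathbb{C})$ with $\zeta$ a primitive $e$-th root of unity in $\mathbb{C}$, and that $R_{n,r}^\Lambda(\mathbb{F}_r)$ is isomorphic to $\mathcal{H}_{n,1}^\Lambda(\mathbb{F}_r)$, since over $\mathbb{F}_r$ a primitive $r$-th root of unity is $1$ (the quantum characteristic in characteristic $r$ with parameter $v=1$ is exactly $r$). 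These are precisely the two algebras appearing in Corollary \ref{C:DegenHecke}. So the key point is that the functor $\mathcal{F}$ of Proposition \ref{T:Degen1} and the change of scalars $M \mapsto M_\mathbb{C}$ can be transported through these isomorphisms.

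The main step is the following: given $R_{n,e}^\Lambda(\mathbb{C})$-modules $M, N$ with a nonzero hom, I need to produce $\mathcal{H}_{n,\zeta}^\Lambda(\mathbb{Z}[\zeta])$-modules $\tilde M, \tilde N$ with $\tilde M_\mathbb{C} \cong M$, $\tilde N_\mathbb{C} \cong N$ as $\mathcal{H}_{n,\zeta}^\Lambda(\mathbb{C})$-modules (using the isomorphism with $R_{n,e}^\Lambda(\mathbb{C})$). For the application we care about, $M = S^\blam$ and $N = S^\bmu$ are Specht modules, and these are defined integrally: the construction of Kleshchev--Mathas--Ram via a single generator and homogeneous relations \cite{kmr}, and the original construction of Brundan--Kleshchev--Wang \cite{bkw11}, both make sense over $\mathbb{Z}[\zeta]$ (equivalently, the integral cyclotomic Hecke algebra $\mathcal{H}_{n,\zeta}^\Lambda(\mathbb{Z}[\zeta])$ has integrally-defined Specht modules with $\mathbb{Z}[\zeta]$-free underlying module and $\tilde M_\mathbb{C} = \mathbb{C} \otimes_{\mathbb{Z}[\zeta]} \tilde M \cong M$). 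Then apply Corollary \ref{C:DegenHecke} to $\tilde M, \tilde N$ to get $\Hom_{\mathcal{H}_{n,1}^\Lambda(\mathbb{F}_r)}(\tilde M_{\mathbb{F}_r}, \tilde N_{\mathbb{F}_r}) \neq \{0\}$, and finally transport back along the isomorphism $\mathcal{H}_{n,1}^\Lambda(\mathbb{F}_r) \cong R_{n,r}^\Lambda(\mathbb{F}_r)$, identifying $\tilde M_{\mathbb{F}_r}$ with $M_{\mathbb{F}_r}$ in the quiver Hecke setting (again using that the Specht modules match under the isomorphism, which is part of the Brundan--Kleshchev--Wang correspondence).

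I expect the main obstacle to be purely bookkeeping: making precise that "$M_{\mathbb{F}_r}$" on the quiver Hecke side of the corollary coincides with "$\tilde M_{\mathbb{F}_r} = \tilde M / \mathfrak{p}\tilde M$" on the Hecke side, which amounts to checking that the integral forms of Specht modules are compatible across the three versions of the Brundan--Kleshchev isomorphism (over $\mathbb{Z}[\zeta]$, over $\mathbb{C}$, and over $\mathbb{F}_r$) and that reduction mod $\mathfrak{p}$ commutes with the isomorphism. Since the isomorphism theorem is known to hold over arbitrary rings and the Specht modules are defined functorially in the ground ring, this is routine but should be stated carefully. If one wishes to avoid even this, one can simply phrase the corollary as applying to those $R_{n,e}^\Lambda(\mathbb{C})$-modules that arise by base change from $\mathbb{Z}[\zeta]$-forms, which includes all Specht modules, and that is all we need for the proof of the main theorem in Section \ref{S:Straight}.
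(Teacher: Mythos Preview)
Your proposal is correct and follows essentially the same approach as the paper: the paper's proof is a single sentence invoking the Brundan--Kleshchev isomorphism to transport Corollary~\ref{C:DegenHecke} to the quiver Hecke setting. You are in fact more careful than the paper about the need for integral $\mathbb{Z}[\zeta]$-forms in order for $M_{\mathbb{F}_r}$ to make sense, and you correctly note that this is unproblematic for Specht modules, which is the only case used later; the paper's phrasing ``for Specht modules of cyclotomic quiver Hecke algebras'' tacitly makes the same restriction.
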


Informally, let $p$ be the characteristic of the field and $e$ be the length of the quiver $A_{e-1}^{(1)}$ corresponding to a cyclotomic quiver Hecke algebra. Then, if we have an $R_n^\Lambda$-module homomorphism under $(p, e) = (0, r^a)$ for some integer $a$, it follows that we have an $R_n^\Lambda$-module homomorphism under $(p, e) = (r, r)$.

\begin{eg} \label{Eg:Degen} 
Let $R_{5, 4}^{\Lambda_0}(\mathbb{C})$ be the cyclotomic quiver Hecke algebra with parameters $n = 5, e = 4, p = 0$. The main result in~\cite{lm14} gives a non-zero homomorphism from $S^\lambda$ to $S^\mu$, where $\lambda = (3, 2), \mu = (5)$. This is because the two shapes differ by a single horizontal strip of length less than $e = 4$ with matching residues:
\begin{equation}
\lambda = \ShadedTableau[(1, -1), (2, -1)]{{0,1,2},{3,0}}{3}, \quad \mu = \ShadedTableau[(4, 0), (5, 0)]{{0,1,2,3,0}}{1}
\end{equation}
Then an application of \cref{C:Degen} shows that there is a non-zero homomorphism in $\Hom_{R_{5,2}^{\Lambda_0}(\mathbb{F}_2)}(S^\lambda_{\mathbb{F}_2}, S^\mu_{\mathbb{F}_2})$, which cannot be directly deduced from the main result in \cite{lm14}. The existence of a homomorphism can also be seen by applying \cref{T:Carter-Payne}. 
\end{eg}

In fact, the application of \cref{C:Degen} to the main theorem in~\cite{lm14} yields a full generalization of the classical Carter--Payne theorem for symmetric groups (\cref{T:Carter-Payne}). We do not write the details because our main theorem is a further generalization.

A result analogous to \cref{C:DegenHecke} for decomposition matrices can be found in~\cite[Section 6.2]{Mathas}. In the above discussion we did not make use of the generators and relations of the cyclotomic Hecke algebra, so the result can be made more general. For instance, the cyclotomic $q$-Schur algebra may be considered instead of the cyclotomic Hecke algebra.

\begin{rem} 
Our proof of \cref{C:Degen} used the Hecke algebra setting. A proof in the \emph{quiver} Hecke algebra setting is highly desirable for several reasons. Firstly, the degree of a homomorphism is in general not preserved under \cref{C:Degen}, unless $a = 1$. For instance, in \cref{Eg:Degen}, the homomorphism where $e = 4, p = 0$ has degree $+1$, as the description of the degree in \cite{lm14} shows. However, the homomorphism where $e = 2, p = 2$ has degree $0$. Secondly, it is likely that a finer result holds for quiver Hecke algebras, where a homomorphism between $R_n^\Lambda$-modules under $(p, e) = (r, r^a)$ implies the existence of a homomorphism of $R_n^\Lambda$-modules under $(p, e) = (r, r^{a-1})$. However, Hecke algebras do not exist for $(p,e)=(r,r^a)$ unless $a = 1$, so the finer result cannot be obtained via the Brundan--Kleshchev isomorphism. A proof of \cref{C:Degen} directly in the quiver Hecke algebra setting is likely to shine light on both of these phenomena.
\end{rem}

\section{One-row Carter--Payne homomorphisms} \label{S:Rows}

If $\blam \vdash n$ and $\bnu \vdash n + \gamma$ are multipartitions such that $[\blam] \subset [\bnu]$, define $\t_\blam^\bnu$ to be the {unique maximal tableau in the dominance order such that $(\t_\blam^\bnu)_{\downarrow n} = \t^\blam$. A \emph{row-strip} $[\xi] \subset [\blam]$ is a contiguous set of horizontally-adjacent nodes inside a component of $[\blam]$ . It is said to be \emph{removable} if $[\blam] \setminus [\xi]$ is the shape of a partition.
 
Let $\blam$ and $\bmu$ be multipartitions of~$n$ and suppose that
  $\bmu\gdom\blam$, that $\bnu$ is a multipartition of
  $n+\gamma$ such that $[\bnu] = [\blam] \cup [\bmu]$, and $[\bnu]\backslash[\blam]$ and $[\bnu]\backslash[\bmu]$ are both
  removable row-strips of the same length~$\gamma < e$ and with the same sequence of residues $J=\res(\bnu\backslash\blam)=\res(\bnu\backslash\bmu)$. Write $[\lambda^*] = [\bnu] \backslash [\bmu]$ and $[\mu^*] = [\bnu] \backslash [\blam]$, and label the maximal removable row-strips which are subshapes of $[\mu^*]$ between $[\mu^*]$ and $[\lambda^*]$ as $[\mu^*] = [\xi^0] \prec [\xi^1] \prec \dots \prec [\xi^{c-1}] \prec [\xi^c] = [\lambda^*]$. That is, within the set of removable row-strips which are subshapes of $[\mu^*]$, we consider only those which are maximal under set containment, so that we do not take $[\xi]$ if there exists $[\rho]$ with $[\xi] \subset [\rho]$ (see \cref{Eg:Rows}). We assume throughout the section that $R^\Lambda_n$ is defined over a field of characteristic $p$, possibly zero, and with respect to a quiver $A_{e-1}^{(1)}$ with $e \in \mathbb{Z}_{\geq 2} \cup \{\infty\}$.

\begin{thm}\emph{\textbf{(\cite[Theorem 3.12]{lm14})}} \label{T:MainOneRow}
Under the conditions above, there exists a homomorphism $\theta \in \Hom_{R_n^\Lambda}(S^\blam\<a-b+2c\>,S^\bmu)$, where

\begin{equation*}
  \begin{aligned}
    a&=\#\{N \in \Add(\bnu)\mid\res(N) \in J \text{ and } \row(\bnu\backslash\blam) < \row(N) \leq \row(\bnu\backslash\bmu)\},\\
   b&=\#\{N \in \Rem(\bnu)\mid\res(N) \in J \text{ and } \row(\bnu\backslash\blam) <\row(N) \leq \row(\bnu\backslash\bmu)\}.\\
 \end{aligned}
\end{equation*}
Moreover, $a-b+2c>0$ and $\theta(v_{\t^\blam})$ is a basis element of $S^\bmu$ (for the basis described in \cref{SS:Specht}).
\end{thm}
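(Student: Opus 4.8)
The plan is to write down $\theta(v_{\t^\blam})$ explicitly and then check that it is killed by every defining relation of $S^\blam$, since a homomorphism out of a cyclic module is determined by the image of its generator.

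\emph{The candidate.} I would first produce a standard $\bmu$-tableau $\s$ whose residue sequence equals that of $\t^\blam$ and set $\theta(v_{\t^\blam})=v_\s$. This is where the chain $[\mu^*]=[\xi^0]\prec[\xi^1]\prec\dots\prec[\xi^c]=[\lambda^*]$ enters: $\s$ is obtained from $\t^\blam$ by sliding the $\gamma$ entries that occupy $[\lambda^*]$ up to $[\mu^*]$ one legal position at a time, each slide also bumping the entries of the row-strip it crosses, so that standardness is preserved at every step. The hypotheses $\gamma<e$ (the strip carries no repeated residue) and $\bmu\gdom\blam$ (the strip genuinely moves to an earlier position) are exactly what make this cascade well-defined and terminate in a standard tableau; and since $J=\res(\bnu\backslash\blam)=\res(\bnu\backslash\bmu)$, corresponding nodes of $[\lambda^*]$ and $[\mu^*]$ carry equal residues, so the residue sequences of $\s$ and $\t^\blam$ coincide. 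In particular $\theta(v_{\t^\blam})=v_\s$ is manifestly a single standard basis element, which is the last assertion of the theorem, and the permutation $\sigma_\s$ with $\s=\t^\bmu\sigma_\s$ factors as a product of the ``sliding cycles'' attached to the $[\xi^i]$, which is the form we use in the computations below.

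\emph{Well-definedness.} It remains to verify that $v_\s$ satisfies, respectively is killed by, each relation defining the generator of $S^\blam$. The residue relation \cref{E:ResidueRelation} is immediate from the equality of residue sequences. The relations \cref{E:Specht} split into two kinds: for a same-row pair of $\t^\blam$ both of whose entries still lie in a single row of $\s$ — the generic case — the vanishing is just the corresponding instance of \cref{E:Specht} for $\bmu$; the delicate case is the ``split'' pair, where the two entries of a same-row pair of $\t^\blam$ end up one in a row $\le\row(\bnu\backslash\blam)$ and one in a row $\ge\row(\bnu\backslash\bmu)$ of $\s$, and there one must compute $v_\s\psi_r$ directly — the two strings already cross in $v_\s$, so a second crossing is created and removed using \cref{E:PsiSquared}, and the resulting dotted terms must be shown to vanish in $S^\bmu$ by sliding the dots upward (via \cref{E:DotSlide}, keeping track of error terms) into a Specht box. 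The homogeneous Garnir relations are treated similarly: because $\gamma<e$ every Garnir belt meeting the displaced strip has length $<e$, so after being pushed through the cascade it becomes an instance of \cref{E:Garnir} for $\bmu$ together with a family of error terms which, using the stubborn- and immobile-string lemmas of \cref{S:Stubborn}, either vanish or cancel in pairs.

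\emph{Degree.} Finally I would evaluate $\deg\s-\deg\t^\blam$ from the recursion \cref{E:Degree}, bookkeeping the contributions $d_N$ of the nodes that are displaced: the addable and removable $J$-nodes $N$ of $\bnu$ with $\row(\bnu\backslash\blam)<\row(N)\le\row(\bnu\backslash\bmu)$ account for the term $a-b$, while each of the $c$ slides through a maximal intermediate row-strip contributes $+2$, giving $+2c$ altogether; the strict positivity of $a-b+2c$ I would isolate as a short combinatorial lemma, observing that each slide makes a strictly positive net contribution. The main obstacle is the well-definedness step, and within it the Garnir relations: one must be sure that every error term produced when the displaced strip meets a Garnir belt — or the split Specht relation — either vanishes in $S^\bmu$ or is cancelled by another, and it is precisely for this bookkeeping that the stubborn-string theory of \cref{S:Stubborn} is built. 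Constructing $\s$ and checking standardness along the cascade is also not completely routine, but it is a finite combinatorial verification controlled by the hypotheses $\gamma<e$ and $\bmu\gdom\blam$.
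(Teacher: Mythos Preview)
Your approach is genuinely different from the paper's, and the difference is worth spelling out.

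The paper does \emph{not} verify the defining relations of $S^\blam$ one by one. Instead it works inside the larger Specht module $S^\bnu$ (with $\bnu=\blam\cup\bmu$) and invokes \cref{L:EllersMurray}: one exhibits a single element $L=y_{n+1}^{\,c}\,e_\bmu^\blam\in R_{n+\gamma}^\Lambda$ which commutes with $R_n^\Lambda$, and then checks the three filtration conditions \textbf{(A)}, \textbf{(B)}, \textbf{(C)} on $v_\t L$. The commutation with $R_n^\Lambda$ is what delivers well-definedness for free, so no Specht or Garnir relation of $S^\blam$ ever has to be checked directly. The actual work is then a stubborn-string argument showing where string $n{+}1$ can land after $c$ dots (\cref{L:OneRowImmobile}), together with the local computation \cref{L:MainOneRow} to identify the surviving term as a single standard basis element. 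The explicit tableau you describe is exactly the paper's $\t_\bmu^*$.

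Your direct route --- fix $v_\s$ and kill every relation --- is in principle legitimate, but the place you gloss over is precisely the obstacle that the Ellers--Murray machinery is designed to avoid. The homogeneous Garnir relations of $S^\blam$ are not in general the short ones of \cref{E:Garnir}; the paper explicitly notes that only the simplest case is ever needed in its arguments, and that is because it never has to verify a Garnir relation for $S^\blam$ at all. Your claim that ``because $\gamma<e$ every Garnir belt meeting the displaced strip has length $<e$'' is not right: Garnir belts are determined by $\blam$, not by the strip, and a belt that meets $[\lambda^*]$ can be arbitrarily long. Pushing a general Garnir element through the cascade and tracking all error terms is a substantial computation that the stubborn-string lemmas of \cref{S:Stubborn} do not obviously organise; this is a real gap, not just missing detail. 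The paper's indirection through $S^\bnu$ buys exactly this: one trades infinitely many relation checks for three containment checks about a single element $L$.
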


We define $\t^*_\bmu \in \Std(\bmu)$ so that $v_{\t^*_\bmu}$ is the basis element described in the theorem. This tableau is equal to the tableau $\t_{z+1}$ in~\cite[Section 3]{lm14}, but we construct it slightly differently.

Let $j \in \{1, \dots, c\}$, and suppose that $\xi^j$ is a row-strip of length $l \leq \gamma$. Let $\sigma_{\mu^*, \xi^j}$ be the permutation that exchanges the numbers appearing in $[\xi^j]$ in the tableau $\t^\bnu_\blam$ with the numbers $\{n+1, \dots, n+l\}$, in order. Then we have $\t_\bmu^* = \left(\t_\blam^\bnu \sigma_{\mu^*, \xi^1} \dots \sigma_{\mu^*, \xi^{c-1}} \sigma_{\mu^*, \lambda^*}\right)_{\downarrow n}$. Recall that permutations act on tableaux by permuting the entries of the tableaux. 

\begin{eg} \label{Eg:Rows}
Let $p = 0$, $e = 4$ and $\Lambda = (0, 2, 1)$. Suppose $\blam = (\varnothing, (3,2,2), (4,3)) \vdash 14$ and $\bmu = ((3), (3,2,2), (4)) \vdash 14$, so that $\bnu =  ((3), (3,2,2), (4,3)) \vdash 17$. We draw $\bnu$ with the residues corresponding to each node.
\begin{equation*}
    \left(\,\ShadedTableau[(1,0),(2,0),(3,0)]{{0,1,2}}{4}, \quad \ShadedTableau{{2,3,0},{1,2},{0,1}}{4}, \quad\ShadedTableau[(1,-1),(2,-1),(3,-1)]{{1,2,3,0},{0,1,2}}{4}\,\right)
\end{equation*}
Here $[\mu^*]$ and $[\lambda^*]$ have been shaded in. It is easy to see that $a = 2, b = 4$ and $c = 4$, so that $\deg \theta = a - b + 2c = 6$. Let $\theta \in \Hom_{R_n^\Lambda}(S\<6\>, S^\bmu)$ be the homomorphism given by the theorem. To compute $\theta(v_{\t^\blam})$, we first write
\begin{equation*}
\t_\blam^\bnu = \left(\,\ShadedTableau[(1,0),(2,0),(3,0)]{{15,16,17}}{4}, \quad \ShadedTableau[(3,0),(1,-2),(2,-2)]{{1,2,3},{4,5},{6,7}}{4}, \quad\ShadedTableau[(1,-1),(2,-1),(3,-1),(4,0)]{{8,9,10,11},{12,13,14}}{4}\,\right) \in \Std(\bmu),
\end{equation*}
where we have shaded in the shapes $[\mu^*] = [\xi^0] \prec \dots \prec [\xi^4] = [\lambda^*]$.  Entries of $[\t_{\blam}^\bnu]$ outside every $[\xi^j]$ are fixed under the actions of the permutations $\sigma_{\mu^*, \xi^j}$. Next we apply the permutations $\sigma_{\mu^*, \xi^j}$ in order. As an example, we have
\begin{equation*}
\t_\blam^\bnu \sigma_{\mu^*, \xi^1} = \left(\,\ShadedTableau[(1,0)]{{3,16,17}}{4}, \quad \ShadedTableau[(3,0)]{{1,2,15},{4,5},{6,7}}{4}, \quad\ShadedTableau{{8,9,10,11},{12,13,14}}{4}\,\right) \in \Std(\bmu).
\end{equation*}
If we apply all permutations, we obtain
\begin{equation*}
\t_\blam^\bnu \sigma_{\mu^*, \xi^1} \dots \sigma_{\mu^*, \lambda^*} = \left(\,\ShadedTableau[(1,0),(2,0),(3,0)]{{3,7,14}}{4}, \quad \ShadedTableau[(3,0),(1,-2),(2,-2)]{{1,2,6},{4,5},{11,13}}{4}, \quad\ShadedTableau[(1,-1),(2,-1),(3,-1),(4,0)]{{8,9,10,12}, {15,16,17}}{4}\,\right) \in \Std(\bmu).
\end{equation*}
The tableau $\t_\bmu^*$ is obtained simply by restricting to the first $n=14$ positive integers. 
\begin{equation*}
\t_\bmu^* = \left(\,\ShadedTableau[(1,0),(2,0),(3,0)]{{3,7,14}}{4}, \quad \ShadedTableau[(3,0),(1,-2),(2,-2)]{{1,2,6},{4,5},{11,13}}{4}, \quad\ShadedTableau[(4,0)]{{8,9,10,12}}{4}\,\right) \in \Std(\bmu).
\end{equation*}
The associated basis element $v_{\t_\bmu^*}$ is given by the diagram
\begin{equation} \label{E:egDiagramRow}
 \begin{braid}
	\def\h{6};
	\def\sep{1}
	\def\a{15+\sep};
	\def\dashpart{0.9}
	
	\draw (0, 0) -- (3 + \sep, \h) node[anchor = south]{$2$};
	\draw (1, 0) -- (4 + \sep, \h) node[anchor = south]{$3$};
	\path (2, 0) -- (5 + \sep, \h) coordinate[pos = 0.73] (A);
	\draw (2, 0) -- (A) -- (0, \h) node[anchor = south]{$0$} ;
	\draw (3+\sep, 0) -- (6 + 2*\sep, \h) node[anchor = south]{$1$};
	\draw (4+\sep, 0) -- (7 + 2*\sep, \h) node[anchor = south]{$2$};
	\path (5+2*\sep, 0) -- (8 + 3*\sep, \h) coordinate[pos = 0.52] (B);
	\draw (5+2*\sep, 0) -- (B) -- (A) -- (5+\sep,\h) node[anchor = south]{$0$};
	\path (6+2*\sep, 0) -- (9 + 3*\sep, \h) coordinate[pos=0.525] (C);
	\draw (6+2*\sep, 0) -- (C) -- (1, \h) node[anchor = south]{$1$};
	
	\draw (7 + 3*\sep, 0) -- (10 + 4*\sep, \h) node[anchor = south]{$1$};
	\draw (8 + 3*\sep, 0) -- (11 + 4*\sep, \h) node[anchor = south]{$2$};
	\draw (9 + 3*\sep, 0) -- (12 + 4*\sep, \h) node[anchor = south]{$3$};
	
	\path (11 + 4*\sep, 0) -- (14 + 5*\sep, \h) coordinate[pos=0.19] (F);
	\path (11 + 4*\sep, 0) -- (F) -- (0,\h) coordinate[pos = 0.35] (G);
	
	\path (12 + 4*\sep, 0) -- (15 + 5*\sep, \h) coordinate[pos=0.19] (E);
	\path (12 + 4*\sep, 0) -- (E) -- (1,\h) coordinate[pos = 0.35] (H);
	\draw (12 + 4*\sep, 0) -- (E) -- (H) -- (9+3*\sep,\h) node[anchor = south]{$1$};
	\path (13 + 4*\sep, 0) -- (16 + 5*\sep, \h) coordinate[pos=0.19] (D);
	\draw (13 + 4*\sep, 0) -- (D) -- (2, \h) node[anchor = south]{$2$};
	\path (10 + 3*\sep, 0) -- (13 + 4*\sep, \h) coordinate[pos=0.27] (I) node[anchor = south]{$0$};
	\draw (10 + 3*\sep, 0) -- (I) -- (G) -- (8+3*\sep,\h) node[anchor = south]{$0$};
	\draw (11 + 4*\sep, 0) -- (F) -- (I) --  (13 + 4*\sep, \h) node[anchor = south]{$0$};
	
	\draw[brown] (-0.5, \h) rectangle (2.5 , \h + 1);
	\draw[brown] (2.5+\sep, \h) rectangle (5.5 + \sep, \h + 1);
	\draw[brown] (5.5 + 2*\sep, \h) rectangle (7.5 + 2*\sep , \h + 1);
	\draw[brown] (7.5 + 3*\sep, \h) rectangle (9.5 + 3*\sep , \h + 1);
	\draw[brown] (9.5 + 4*\sep, \h) rectangle (13.5 + 4*\sep , \h + 1);
\end{braid}.
\end{equation}
Finally, \cref{C:Degen} shows that if we set $e = p = 2$, there is another homomorphism $\overline{\theta} \in \Hom_{R_n^\Lambda}(S^\blam, S^\bmu)$. Computing this homomorphism in the cyclotomic quiver Hecke algebra setting requires using the Brundan--Kleshchev isomorphism twice to switch between the cyclotomic Hecke algebra and the cyclotomic quiver Hecke algebra settings, which is a lengthy computation. The homomorphism space can instead be computed directly, and in this case we find that there is a homomorphism of degree 7 which sends $v_{\t^{\blam}}$ to $v_{\t'}$, where $\t' \in \Std(\bmu)$ is defined as
\begin{equation*}
\t' = \left(\,\ShadedTableau{{3,7,14}}{4}, \quad \ShadedTableau[(3,0),(2,-1)]{{1,2,5},{4,6},{11,13}}{4}, \quad\ShadedTableau{{8,9,10,12}, {15,16,17}}{4}\,\right).
\end{equation*}
Here we have elected to shade the nodes $N$ such that $\t_\bmu^{*}(N) \neq \t'(N)$. In this case $\overline{\theta}$ sends the Specht generator $v_{\t^{\blam}}$ to a monomial, but this does not happen in general, even if $\theta$ does send the Specht generator to a monomial.
\end{eg}

The proof of \cref{T:MainOneRow} we give here differs slightly from the one given in~\cite{lm14}. On the one hand, this will make it possible to prove the explicit description of the homomorphisms given above, which is a new result. On the other hand, the proof here resembles more closely the proof of the more general statement in the next section.

Recall our definitions for $\blam, \bmu, \bnu$ at the beginning of the section and let $\t \in \Std(\bnu)$. Let $\bsig$ be either $\blam$ or $\bmu$. Let $\t_\bsig \in \Std(\bnu)$ be a standard tableau such that $(\t_\bsig)_{\downarrow n} = \t^\bsig$. The following notation is borrowed from~\cite{lm14}.
\begin{align*}
\Std_\t^\gamma(\bnu) &=\{\s\in\Std(\bnu)\mid\s^{-1}(n+g)=\t^{-1}(n+g)\text{ for }1\le g\le\gamma\}, \\
\StdCheck &=\{\s\in\Std(\bnu)\mid\Shape(\s_{\downarrow n}) \gdom \bsig\}, \\
\StdHat & =\{\s\in\Std(\bnu)\mid \Shape(\s_{\downarrow n}) \not\ledom \bsig\},\\
\Std_n(\bnu)&=\{\t\in\Std(\bnu)\mid\t_{\downarrow n}=\t^{\Shape(\t_{\downarrow n})}\},\\
\IniNu[\pmb{\bsig}]&=\{\t\in\Std_n(\bnu)\mid\res_\t(n+g)=\res_{\t_{\bsig}^\bnu}(n+g),
               \text{ for }1\le g\le\gamma\},
\end{align*}
and
\begin{align*}
  \check{S}^\bnu_{\t_{\bsig}} &= \<v_\s \mid \s\in\Std_{\t_{\bsig}}^\gamma(\bnu)\cup\StdCheck[\bsig]\>_\Z, &
\check{S}^\bnu_\bsig &= \<v_\s \mid \s\in\StdCheck[\bsig]\>_\Z, \\
\hat{S}^{\bnu}_{\t_{\bsig}} &= \<v_\s \mid \s\in\Std_{\t_{\bsig}}^\gamma(\bnu)\cup\StdHat[\bsig]\>_\Z, &
\hat{S}^{\bnu}_{\bsig} &= \<v_\s \mid \s\in\StdHat[\bsig]\>_\Z.
\end{align*}

Embed $R_n$ in $R_{n+\gamma}$ in the natural way, so that $v \in R_n$ is mapped to the image of $v \otimes 1$ under the map $R_{n} \otimes R_{\gamma} \hookrightarrow R_{n + \gamma}$. Informally, $R_n$ acts on the first $n$ strings of $R_{n+\gamma}$.  Moreover, the embedding descends to the cyclotomic quotients $R_{n}^\Lambda \subset R_{n+\gamma}^\Lambda$. To prove \cref{T:MainOneRow} we will need the following lemma from \cite{lm14}, based on work by Ellers and Murray~\cite{EM}.

\begin{lem}\emph{\textbf{(\cite[Lemma 2.24]{lm14})}} \label{L:EllersMurray} 
For $\bnu\in \Multipart_{n+\gamma}^\ell$ and $\blam,\bmu\in \Multipart_{n}^\ell$  with $\blam,\bmu\subset\bnu$, let $\t_{\blam},\t_{\bmu}\in\Std(\bnu)$
  be standard tableaux such that $(\t_{\blam})_{\downarrow n}=\tlam$,
  $(\t_{\bmu})_{\downarrow n}=\tmu$ and $\t_{\bmu}\gdom\t_{\blam}$. Suppose that there
  exists a homogeneous element $L\in R_{n+\gamma}^\Lambda$ such that $L$ commutes with $R_n^\Lambda$ and
 $$\textbf{(A)} \quad \check{S}^\bnu_{\t_{\blam}}L\subseteq \hat{S}^{\bnu}_{\t_{\bmu}}, \quad \textbf{(B)} \quad
   \check{S}^\bnu_\blam L\subseteq \hat{S}^{\bnu}_\bmu\quad\text{and}\quad \textbf{(C)} \quad
   v_{\t_{\blam}} L\notin \hat{S}^{\bnu}_\bmu.$$
 Then $\Hom_{R_n^\Lambda}(S^\blam\<d\>,S^\bmu)\ne0$, where
 $d=(\deg\t_{\blam}-\deg\tlam) -(\deg\t_{\bmu} - \deg\tmu) +\deg L$.
 \end{lem}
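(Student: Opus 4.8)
The plan is to obtain the statement as the graded refinement of the Ellers--Murray argument~\cite{EM}: I would show that right multiplication by $L$ induces a non-zero $R_n^\Lambda$-module homomorphism between two subquotients of the restriction of $S^\bnu$ to $R_n^\Lambda$, one of which is (a degree shift of) $S^\blam$ and the other (a degree shift of) $S^\bmu$.

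The first and main step is the branching structure. For $\bsig\in\{\blam,\bmu\}$ I would check that $\check S^\bnu_\bsig\subseteq\check S^\bnu_{\t_\bsig}$ and $\hat S^\bnu_\bsig\subseteq\hat S^\bnu_{\t_\bsig}$ are graded $R_n^\Lambda$-submodules of $S^\bnu$. This should follow from the triangularity of the standard basis with respect to the action of $R_n^\Lambda$ on the first $n$ strings: for $\s\in\Std(\bnu)$ and $h\in R_n^\Lambda$, the element $v_\s h$ is a $\mathbb{Z}$-combination of basis elements $v_{\s'}$ for which either $\Shape(\s'_{\downarrow n})\gdom\Shape(\s_{\downarrow n})$, or $\Shape(\s'_{\downarrow n})=\Shape(\s_{\downarrow n})$ and $\s'$ agrees with $\s$ on the entries $n+1,\dots,n+\gamma$ --- and in the second case the coefficients are precisely those describing the action of $R_n^\Lambda$ on $S^{\Shape(\s_{\downarrow n})}$. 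Since the tableaux in $\Std_{\t_\bsig}^\gamma(\bnu)$ all have restricted shape $\bsig$ and index exactly the extra basis elements of $\check S^\bnu_{\t_\bsig}$ over $\check S^\bnu_\bsig$ (and of $\hat S^\bnu_{\t_\bsig}$ over $\hat S^\bnu_\bsig$), a dimension count together with this triangularity should yield isomorphisms of graded $R_n^\Lambda$-modules
\begin{equation*}
\check S^\bnu_{\t_\bsig}/\check S^\bnu_\bsig\;\cong\;S^\bsig\<\deg\t_\bsig-\deg\t^\bsig\>\;\cong\;\hat S^\bnu_{\t_\bsig}/\hat S^\bnu_\bsig,
\end{equation*}
each sending the class of $v_{\t_\bsig}$ to the Specht generator of $S^\bsig$; the degree shift records the constant contribution of the common top $\gamma$ nodes. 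This is the graded branching rule for Specht modules, and I expect the verification that these quotients really are copies of $S^\bsig$ --- i.e. that the defining relations of $S^\bsig$ survive the quotient --- to be the main obstacle.

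Granting Step 1, the rest is formal. Hypothesis (A) says right multiplication by $L$ carries $\check S^\bnu_{\t_\blam}$ into $\hat S^\bnu_{\t_\bmu}$, and (B) says it carries the submodule $\check S^\bnu_\blam$ into $\hat S^\bnu_\bmu$, so $L$ descends to a map $\bar L\colon\check S^\bnu_{\t_\blam}/\check S^\bnu_\blam\to\hat S^\bnu_{\t_\bmu}/\hat S^\bnu_\bmu$; this is $R_n^\Lambda$-linear since $L$ commutes with $R_n^\Lambda$, and homogeneous of degree $\deg L$ since $L$ is. Composing with the identifications of Step 1, $\bar L$ becomes an $R_n^\Lambda$-homomorphism $S^\blam\<\deg\t_\blam-\deg\t^\blam\>\to S^\bmu\<\deg\t_\bmu-\deg\t^\bmu\>$ of degree $\deg L$, which after the appropriate shift is an element of $\Hom_{R_n^\Lambda}(S^\blam\<d\>,S^\bmu)$ with $d$ as stated. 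Finally I would check it is non-zero: the class of $v_{\t_\blam}$ generates $\check S^\bnu_{\t_\blam}/\check S^\bnu_\blam$ and corresponds to $v_{\tlam}$, and its image $v_{\t_\blam}L+\hat S^\bnu_\bmu$ lies in $\hat S^\bnu_{\t_\bmu}$ by (A) but is non-zero modulo $\hat S^\bnu_\bmu$ precisely by (C). Hence $\bar L\neq0$, and the corresponding element of $\Hom_{R_n^\Lambda}(S^\blam\<d\>,S^\bmu)$ is non-zero. (The role of the hypothesis $\t_\bmu\gdom\t_\blam$ is to make the containments (A)--(C) compatible in this orientation.)
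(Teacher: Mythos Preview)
The paper does not supply its own proof of this lemma: it is quoted verbatim from \cite[Lemma~2.24]{lm14} (with attribution to the Ellers--Murray method~\cite{EM}) and used as a black box. Your outline is the standard argument and matches what \cite{lm14} does: establish that the four spaces $\check S^\bnu_\bsig,\check S^\bnu_{\t_\bsig},\hat S^\bnu_\bsig,\hat S^\bnu_{\t_\bsig}$ are graded $R_n^\Lambda$-submodules via the dominance triangularity of the standard basis, identify both subquotients with a shifted copy of $S^\bsig$, and then observe that right multiplication by $L$ descends to the required non-zero map. Your degree bookkeeping and the use of condition~(C) to conclude non-vanishing are correct.
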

 
\begin{rem}
	In this text we use $\t_\blam^\bnu$ and $\t_\bmu^\bnu$ as the tableaux $\t_\blam, \t_\bmu$ in the lemma.
\end{rem}

From now on we label the residues in $[\mu^*]$ and $[\lambda^*]$ as $i_1, i_2, \dots, i_\gamma$, so that $i_{j} = i_{j-1} + 1 \text{ (mod $e$)}$.
\begin{lem} \label{L:OneRowImmobile}
Let $v \in S^\bnu$ be a standard monomial such that $\t_v \in \Std_\blam^e(\bnu)$. Suppose that string $s$ of residue $i_1$ reaches a node $N$ which is not in a shape $[\xi^j]$ for any $j \in \{0, \dots, c\}$. Then $s$ is an immobile string.
\end{lem}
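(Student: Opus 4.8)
The plan is to show that whenever one cuts an $(i_1,i_1)$-crossing of $s$, the resulting monomial is zero, which is exactly the definition of an immobile string. First I would analyze which strings can cross $s$ at all. Since $v$ is a standard monomial with $\t_v \in \Std_\blam^e(\bnu)$, strings of residue $i_1$ cross $s$ at most once, and by the dominance constraints coming from the shape of the tableaux $\Std_\blam^e(\bnu)$, the only $i_1$-strings that may cross $s$ are those reaching nodes in $[\mu^*]$ together with the $i_1$-nodes lying between $[\mu^*]$ and the node $N$. I would examine the tree diagram $A_v(s)$: the nodes accessible to $s$ are precisely those reachable by veering at the $(i_1,i_1)$-crossings. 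The key observation is that because $N$ lies \emph{outside} every shape $[\xi^j]$, the node $N$ sits strictly between two consecutive maximal row-strips $[\xi^{j-1}]$ and $[\xi^j]$, so the $i_1$-node of the row-strip $[\xi^j]$ (which is the next accessible $i_1$-node above $N$) is separated from $N$ by a run of nodes whose residues form a contiguous block ending at $i_1$; in fact the residues encountered force the presence of the configuration governed by the Garnir relation \cref{E:Garnir}.

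The second step, and the heart of the argument, is to use \cref{L:Stubborn3} together with the Garnir relation. By the structure just described, $s$ is in fact a stubborn string reaching the smallest node of $A_v(s)$ (all $i_1$-strings crossing it are smaller, by \cref{L:Stubborn0}). Hence, by \cref{L:Stubborn3}, in any $v' \in \mathcal{V}$ obtained by cutting at least one $(i_1,i_1)$-crossing of $s$, the node $N$ is no longer accessible to $s$, so $s$ must reach a node strictly larger than $N$; because $s$ can only reach $i_1$-nodes and the next available $i_1$-node upward is the top node of some $[\xi^j]$, string $s$ after cutting must travel past the intervening nodes. The residues of these intervening nodes, read in order, are $i_1+1, i_1+2, \dots, i_1$ (a full contiguous run of length less than $e$, returning to $i_1$), and together with the brown row-box of the row in which this run sits, this is precisely the left-hand side of the Garnir relation \cref{E:Garnir}. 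Applying that relation forces $v' = 0$.

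The main obstacle I anticipate is bookkeeping the residues and the relative positions of the nodes carefully enough to be sure that the configuration produced really matches the hypotheses of the Garnir relation \cref{E:Garnir} — in particular that the relevant strip of residues has length strictly less than $e$ (which should follow from the $e$-smallness built into the setup of \cref{S:Rows}, where $\gamma < e$) and that one genuinely lands in the \emph{simplest} Garnir case rather than a more complicated one. A secondary subtlety is handling the dependence on $v' \in \mathcal{V}$ uniformly: one must check that \emph{every} way of cutting an $(i_1,i_1)$-crossing of $s$ lands in this situation, not merely the lowest one, but this follows from \cref{L:Stubborn3}, since cutting any crossing already removes $N$ from $A_{v'}(s)$ and forces $s$ upward into the Garnir configuration. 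Once these points are pinned down, the conclusion that $s$ is immobile is immediate from the definition.
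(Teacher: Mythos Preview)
Your proposal has a direction error that makes the argument fail. You claim that $s$ is stubborn and invoke \cref{L:Stubborn3}, but in fact $s$ is co-stubborn, not stubborn. Since $\t_v \in \Std_\blam^e(\bnu) \subset \Std_n(\bnu)$, the basic strings do not cross one another; the only $i_1$-string that can cross $s$ is the extra string $n+1$, which is \emph{larger} than $s$. By \cref{L:Stubborn0} this means $s$ is not stubborn. Consequently, when the $(i_1,i_1)$-crossing of $s$ with $n+1$ is cut, string $s$ moves to a \emph{smaller} node (namely the $i_1$-node of $[\mu^*]$ that $n+1$ was heading to), not a larger one, and your Garnir configuration never arises.

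The paper's argument looks at the other string in the crossing: after the cut it is string $n+1$ that reaches $N$, and in that upper portion of the diagram $n+1$ has no further $i_1$-crossings, so it is immobile at $N$. But $N$ lies in none of the $[\xi^j]$, so there is no standard $\bnu$-tableau with $n+1$ occupying $N$ (the entries $n+1,\dots,n+\gamma$ would have to fill a removable row-strip which is a subshape of $[\mu^*]$, and $N$ is not the $i_1$-node of any such strip). Hence every monomial arising from this cut is zero, which is exactly immobility of $s$. The fix to your approach is therefore not a matter of bookkeeping residues for a Garnir relation, but of tracking the correct string after the cut and using standardness of the resulting tableau directly.
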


\begin{proof} Draw a diagram of all $i_{1}$-strings in $v$. The condition $\t_v \in \Std_\blam^e(\bnu)$ implies that when we draw only the strings of residue $i_{1}$ of the diagram for $v$ we obtain
\begin{equation}
    \begin{braid}
	\def\h{5};
	\def\sep{2}
	\def\a{7+\sep};
	\def\dashpart{0.9}
	\def\hrect{1.15}
	
	\draw (0, 0) -- (0, \h);
	\draw (\sep/2, 0) node{$\cdots$};
	\draw (\sep/2, \h) node{$\cdots$};
	\draw (\sep, 0) -- (\sep, \h);
	\draw (\sep+1, 0) --(\sep+2, \h);
	\draw(2*\sep+1, 0) -- (2*\sep+2, \h)node[color=black, anchor=south]{$N$};
	\draw(\sep + 1 + \sep/2, 0) node{$\cdots$};
	\draw(\sep + 2 + \sep/2, \h) node{$\cdots$};
	\draw(3*\sep + 1, 0) -- (3*\sep+2, \h) ;
	\draw(2.5*\sep + 1, 0) node{$\cdots$};
	\draw(2.5*\sep + 2, \h) node{$\cdots$};
	\draw(3*\sep+2, 0) node[anchor=north] {$n+1$} -- (\sep + 1, \h);
    \draw(\sep+1+\sep/2,-0.5)node{$\underbrace{\hspace*{10mm}}_{k}$};
\end{braid}.
\end{equation}
Consider only the upper part of the diagram, so that the lowest crossing is the one between $s$ and $n+1$. Then if the crossing is broken, string $n+1$ will be clearly immobile (as it does not intersect other strings in this part of the diagram) and reach node $N$. But $N$ is not inside a shape of $[\xi^j]$ for any $j$, so that no tableau with $n+1$ in node $N$ can be standard. The diagram must then equal zero. It is clear that if the upper part of the diagram equals zero, then the whole diagram must equal zero. It follows that breaking the $(i_{1}, i_{1})$-crossing of string $s$ always gives zero, or in other words, $s$ is immobile.
\end{proof}

The following lemma is the main computational tool in the proof of \cref{T:MainOneRow}. 

\begin{lem} \label{L:MainOneRow} Let $k < e$. Let $i_1, \dots, i_k \in I$ be residues so that $i_{j} = i_{j-1} + 1$ \emph{(mod $e$)}. Then
\begin{equation*} 
    \begin{braid}
	\def\h{5};
	\def\sep{2}
	\def\a{5+\sep};
	\def\dashpart{0.9}
	
	\path (1,0) coordinate (A) -- (1+\a, \h) coordinate (B);
	\path (A) -- (B) coordinate[pos=0.5] (C);
	\path (1+\a,0) coordinate (D) -- (1, \h) coordinate (E);
	\path (D) -- (E) coordinate[pos=0.5] (F);
	\draw[rounded corners] (D) -- (C) -- (B) node[anchor=south]{$i_{1}$};
	\draw(2, 0) -- (2 + \a, \h) node[anchor=south]{$i_2$} ;
	\draw (2.5 + \sep/2, 0) node {$\cdots$};
	\draw (2.5 + \sep/2, \h) node[ anchor = south] {$\cdots$};
	
	\draw (3+\sep, 0) -- (3+\sep + \a, \h) node[anchor=south]{$i_{k - 1}$} ;
	\draw (4+\sep, 0) -- (4+\sep + \a, \h) node[anchor=south]{$i_{k}$} ;

	\draw (2 + \a, 0) -- (2, \h) node[anchor=south]{$i_2$} ;
    \draw (3 + \a + \sep, 0) -- (3+\sep, \h) node[anchor=south]{$i_{k-1}$} ;
    \draw (4 + \a + \sep, 0) -- (4+\sep, \h) node[anchor=south]{$i_k$} ;
	
	\draw (2.5 +\a+ \sep/2, 0) node {$\cdots$};
	\draw (2.5 +\a+ \sep/2, \h) node[ anchor = south] {$\cdots$};
	
	\draw[brown] (\a + 0.5, \h) rectangle (2*\a + \sep - 0.5 , \h + 1);

\end{braid} \quad =\quad
    \begin{braid}
	\def\h{5};
	\def\sep{2}
	\def\a{5+\sep};
	\def\dashpart{0.8}
	
	\draw(1+\a, 0) -- (1 + \a, \h) node[anchor=south]{$i_1$} ;
	\draw(2+\a, 0) -- (2 + \a, \h) node[anchor=south]{$i_2$} ;
	\draw (2.5 + \sep/2, 0) node {$\cdots$};
	\draw (2.5 + \sep/2, \h) node[ anchor = south] {$\cdots$};
	
	\draw (3+\a+\sep, 0) -- (3+\sep + \a, \h) node[anchor=south]{$i_{k - 1}$} ;
	\draw (4+\a+\sep, 0) -- (4+\sep + \a, \h) node[anchor=south]{$i_{k}$} ;

	\draw (2, 0) -- (2, \h) node[anchor=south]{$i_2$} ;
    \draw (3+\sep, 0) -- (3+\sep, \h) node[anchor=south]{$i_{k-1}$} ;
    \draw (4+\sep, 0) -- (4+\sep, \h) node[anchor=south]{$i_{k}$} ;
	
	\draw (2.5 +\a+ \sep/2, 0) node{$\cdots$};
	\draw (2.5 +\a+ \sep/2, \h) node[ anchor = south] {$\cdots$};
	
	\draw[brown] (\a + 0.5, \h) rectangle (2*\a + \sep - 0.5 , \h + 1);

\end{braid}.
\end{equation*}
\end{lem}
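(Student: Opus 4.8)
The plan is to transform the left-hand side into the right-hand side using the quiver Hecke relations \cref{E:PsiSquared,E:Braid,E:DotSlide} together with the Specht relations \cref{E:Specht} and the simple Garnir relation \cref{E:Garnir}, the argument being an induction, say on $k$ (or on the number of crossings in the picture). Two features of the hypotheses carry the proof. First, because $k < e$ the residues $i_1, \dots, i_k$ are pairwise distinct and satisfy $i_j \equiv i_{j'} \pm 1 \pmod e$ only when $|j-j'| = 1$; equivalently $a_{i_j, i_{j'}} = 0$ whenever $|j-j'| \ge 2$. Second, the $k$ top nodes carrying the residues $i_1, \dots, i_k$ all lie in one row (the brown box), so every crossing between two of those strings and every dot on one of them is zero by \cref{E:Specht}, and the Garnir relation \cref{E:Garnir} for that row is available.

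Concretely, I would straighten the left-hand picture one boundary string at a time --- for instance by dragging the ``bulging'' $i_1$-string (whose leftward bulge comes from our choice of reduced words for the nested $321$-patterns of residue type $i_1{+}1, i_1, i_1{+}1$) across to its vertical position using \cref{E:Braid}, resolving any coincident double crossings with \cref{E:PsiSquared} and sliding dots with \cref{E:DotSlide}. By the first structural fact, pulling a string of residue $i_j$ past one of residue $i_{j'}$ with $|j-j'|\ge 2$ produces no error term (these relations are applied in their error-free case $a_{i_j,i_{j'}}=0$), so error terms arise only when an $i_j$-string is pulled across a braid whose outer two strings both have residue $i_{j\pm1}$; each such error term cuts an $(i_j,i_{j\pm1})$-crossing into two vertical strings. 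I would then check that every such error term vanishes: the cut forces one of the neighbouring-residue strings either to carry a dot while inside the brown box, or to cross another string inside the brown box --- annihilated by \cref{E:Specht} --- or else to occupy a top node of the row out of increasing order, so that the term is killed by \cref{E:Garnir}. What remains after discarding the error terms is a picture with strictly fewer crossings, to which the inductive hypothesis applies; iterating straightens everything and yields the right-hand side.

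The main obstacle is precisely this error-term bookkeeping: one must fix the order in which the crossings are resolved so that, whenever an error term appears, the two new vertical strings genuinely sit in the configuration needed to invoke \cref{E:Specht} or \cref{E:Garnir}, and one must make sure no error term escapes as a new basis element. This is where both hypotheses are essential --- $k < e$ keeps the non-adjacent interactions error-free and prevents the residues from wrapping around (which would destroy the validity of the simple Garnir relation), while the single-row condition is exactly what makes \cref{E:Specht} and \cref{E:Garnir} available to kill the errors. Alternatively, one can identify the left-hand side at once as $v_\s$ for the explicit non-standard $\bnu$-tableau $\s$ obtained by this block interchange and obtain the right-hand side as its Garnir straightening, which in this simplest case reduces to a single term; verifying that it is this single term comes down to the same analysis.
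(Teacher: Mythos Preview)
Your plan is essentially the paper's: induction plus braid relations, with error terms killed by the row Specht relation and non-adjacent residues handled by the trivial case of \cref{E:PsiSquared}. The paper's execution is slightly tighter than your sketch: the induction is on an internal index $s\in\{1,\dots,k\}$ (straightening one residue at a time from $i_1$ outward), each step is a single application of \cref{E:Braid} whose sole error term dies by \cref{E:Specht} alone, and no dots ever appear --- so \cref{E:DotSlide} and \cref{E:Garnir} are not invoked.
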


\begin{proof} We argue by induction on $s$ that 
\begin{equation}
    \begin{gathered}
    \begin{braid}
	\def\h{5};
	\def\sep{2}
	\def\a{5+\sep};
	\def\dashpart{0.9}
	
	\path (1,0) coordinate (A) -- (1+\a, \h) coordinate (B);
	\path (A) -- (B) coordinate[pos=0.5] (C);
	\path (1+\a,0) coordinate (D) -- (1, \h) coordinate (E);
	\path (D) -- (E) coordinate[pos=0.5] (F);
	\draw[rounded corners] (D) -- (C) -- (B) node[anchor=south]{$i_{1}$};
	\draw(2, 0) -- (2 + \a, \h) node[anchor=south]{$i_2$} ;
	\draw (2.5 + \sep/2, 0) node {$\cdots$};
	\draw (2.5 + \sep/2, \h) node[ anchor = south] {$\cdots$};
	
	\draw (3+\sep, 0) -- (3+\sep + \a, \h) node[anchor=south]{$i_{k - 1}$} ;
	\draw (4+\sep, 0) -- (4+\sep + \a, \h) node[anchor=south]{$i_{k}$} ;

	\draw (2 + \a, 0) -- (2, \h) node[anchor=south]{$i_2$} ;
    \draw (3 + \a + \sep, 0) -- (3+\sep, \h) node[anchor=south]{$i_{k-1}$} ;
    \draw (4 + \a + \sep, 0) -- (4+\sep, \h) node[anchor=south]{$i_k$} ;
	
	\draw (2.5 +\a+ \sep/2, 0) node {$\cdots$};
	\draw (2.5 +\a+ \sep/2, \h) node[ anchor = south] {$\cdots$};
	
	\draw[brown] (\a + 0.5, \h) rectangle (2*\a + \sep - 0.5 , \h + 1);

    \end{braid}
    = \\
    \begin{braid}
	\def\h{5};
	\def\sep{2}
	\def\a{9+\sep};
	\def\dashpart{0.9}
	\def\hrect{1.15}

	\draw (2,0) -- (2,\h) node[anchor=south]{$i_{2}$};
	
	\draw (2.5 + \sep/2, 0) node {$\cdots$};
	\draw (2.5 + \sep/2, \h) node[anchor=south] {$\cdots$};
	\draw (5.5 + \sep + \sep/2, 0) node {$\cdots$};
	\draw (5.5 + \sep + \sep/2, \h) node[anchor=south] {$\cdots$};
	
	\draw (3+\sep,0) -- (3+\sep,\h) node[anchor=south]{$i_{s-1}$}; 
	
	\draw (4+\sep,0) -- (4+\a+\sep,\h) node[anchor=south]{$i_{s}$}; 
	\draw (4+\a+\sep,0) -- (4+\sep,\h) node[anchor=south]{$i_{s}$}; 
	
	\draw (5+\sep,0) -- (5+\a+\sep,\h) node[anchor=south]{$i_{s+1}$}; 
	\draw (5+\a+\sep,0) -- (5+\sep,\h) node[anchor=south]{$i_{s+1}$}; 

	\draw (2.5 + \a + \sep/2 , 0) node {$\cdots$};
	\draw (2.5 + \a + \sep/2, \h) node[anchor=south] {$\cdots$};
	\draw (5.5 + \a + \sep + \sep/2 , 0) node {$\cdots$};
	\draw (5.5 + \a + \sep + \sep/2, \h) node[anchor=south] {$\cdots$};
	
	\draw (6+2*\sep,0) -- (6+\a+2*\sep,\h) node[anchor=south]{$i_{k}$}; 
	\draw (6+\a+2*\sep,0) -- (6+2*\sep,\h) node[anchor=south]{$i_{k}$}; 
	
	\path (1, 0) -- (1 + \a, \h) coordinate[pos = 0.5] (A);
	\draw (1 + \a, 0) -- (A) -- (1 + \a, \h) node[anchor=south]{$i_{1}$};
	
	\path (2, 0) -- (2 + \a, \h) coordinate[pos = 0.5] (A);
	\draw (2 + \a, 0) -- (A) -- (2 + \a, \h) node[anchor=south]{$i_{2}$};
	
	\path (3+\sep, 0) -- (3 + \sep + \a, \h) coordinate[pos = 0.5] (A);
	\draw (3 + \sep + \a, 0) -- (A) -- (3 + \sep + \a, \h) node[anchor=south]{$i_{s-1}$};

	\draw (\a+0.1, \h/2) node[red, circle, draw, scale = 1.3]{};
	\draw[brown] (\a + 0.5, \h) rectangle (2*\a + \sep - 0.5 , \h + \hrect);

\end{braid}.
    \end{gathered}
\end{equation}

If $s = 1$, there is nothing to prove. We follow with the induction step. Applying a braid relation at the red circle gives a sum of two diagrams. In one of the diagrams, the $i_{s-1}$-string pulls past the $(i_s, i_s)$-crossing. Then the $i_{s-1}$-string can be pulled past every further crossing of the co-stubborn $i_{s+1}$-string, where \cref{E:Specht} shows that this diagram equals zero. It follows that the diagram above equals
\begin{equation} 
	\begin{braid}
	\def\h{5};
	\def\sep{2}
	\def\a{9+\sep};
	\def\dashpart{0.9}
	\def\hrect{1.15}
	
	\draw (2,0) -- (2,\h) node[anchor=south]{$i_{2}$};
	
	\draw (2.5 + \sep/2, 0) node {$\cdots$};
	\draw (2.5 + \sep/2, \h) node[anchor=south] {$\cdots$};
	\draw (5.5 + \sep + \sep/2, 0) node {$\cdots$};
	\draw (5.5 + \sep + \sep/2, \h) node[anchor=south] {$\cdots$};
	
	\draw (3+\sep,0) -- (3+\sep,\h) node[anchor=south]{$i_{s-1}$}; 
	
	\path (4+\sep,0) -- (4+\a+\sep,\h) coordinate[pos=0.5] (A); 
	\draw (4+\a+\sep,0) -- ($(A) - (0.3, 0)$) -- (4+\a + \sep,\h) node[anchor=south]{$i_{s}$}; 
	\draw (4+\sep,0) -- ($(A) - (0.4, 0)$) -- (4 + \sep,\h) node[anchor=south]{$i_{s}$}; 
	\draw (3 + \sep + \a, 0) -- ($(A) - (0.6, 0)$) -- (3 + \sep + \a, \h) node[anchor=south]{$i_{s-1}$};

	\draw (5+\sep,0) -- (5+\a+\sep,\h) node[anchor=south]{$i_{s+1}$}; 
	\draw (5+\a+\sep,0) -- (5+\sep,\h) node[anchor=south]{$i_{s+1}$}; 

	\draw (2.5 + \a + \sep/2 , 0) node {$\cdots$};
	\draw (2.5 + \a + \sep/2, \h) node[anchor=south] {$\cdots$};
	\draw (5.5 + \a + \sep + \sep/2 , 0) node {$\cdots$};
	\draw (5.5 + \a + \sep + \sep/2, \h) node[anchor=south] {$\cdots$};
	
	\draw (6+2*\sep,0) -- (6+\a+2*\sep,\h) node[anchor=south]{$i_{k}$}; 
	\draw (6+\a+2*\sep,0) -- (6+2*\sep,\h) node[anchor=south]{$i_{k}$}; 
	
	\path (1, 0) -- (1 + \a, \h) coordinate[pos = 0.5] (A);
	\draw (1 + \a, 0) -- (A) -- (1 + \a, \h) node[anchor=south]{$i_{1}$};
	
	\path (2, 0) -- (2 + \a, \h) coordinate[pos = 0.5] (A);
	\draw (2 + \a, 0) -- (A) -- (2 + \a, \h) node[anchor=south]{$i_{2}$};

	\draw (\a+0.1, \h/2) node[red, circle, draw, scale = 1.3]{};
	\draw[brown] (\a + 0.5, \h) rectangle (2*\a + \sep - 0.5 , \h + \hrect);

\end{braid}
\end{equation}
Since $k < e$, the residues $i_s$ and $i_t$ are not adjacent in the quiver $\Gamma$ for any $t \in {1, \dots, s - 2}$. Therefore, we can use \cref{E:PsiSquared} to slide the $i_s$-string to the left through the strings with residues $i_{s-2}, \dots, i_{1}$. Induction is then complete. Taking $s = k$ proves the lemma. 
\end{proof}

Following the same convention as \cite{lm14}, we define $e_\bmu^\blam = \sum_{j \in I^n} e(j_1, \dots, j_n, i_1, \dots, i_\gamma)$, where $i_h = \res_{t_\blam^\bnu}(n+h)$. This corresponds to the image of $e(i_1, \dots, i_\gamma)$ under $R_\gamma \hookrightarrow R_{n+\gamma} \twoheadrightarrow R_{n+\gamma}^\Lambda$. Letting $R_n^\Lambda \subset R_{n+\gamma}^\Lambda$ as above, $e_\bmu^\blam$ commutes with $R_n^\Lambda$.

\begin{proof}[Proof of \cref{T:MainOneRow}] We will prove the theorem by applying \cref{L:EllersMurray}. We argue that $L = y_{n+1}^c e_\bmu ^\blam$ satisfies all three conditions of the lemma. 

First we will check conditions \textbf{(A)} and \textbf{(B)} of \cref{L:EllersMurray}. Let $v_\t \in S^\bnu$ be the basis element of $S^\bnu$ corresponding to tableau $\t \in \Std(\bnu)$. Since $L$ commutes with the natural inclusions of $R_n$ and $R_\gamma$ into $R_{n+\gamma}$, we may assume that $\t \in \Std_{n}(\bnu)$. Since $L$ includes the idempotent $e^\blam_\bmu$, we can further assume that $\t \in \Std_\blam^e(\bnu)$, as $v_\t L = 0$ otherwise.

Next, draw only the $i_1$-strings in $v_\t$ that reach nodes inside a subshape of $[\mu^*]$. Note that there may exist subshapes of $[\mu^*]$ different from $[\xi^0], \dots, [\xi^c]$. Due to \cref{L:OneRowImmobile}, the $i_1$-strings that we do not draw are immobile.
\begin{equation} \label{E:CuttableKnots}
    \begin{braid}
	\def\h{5};
	\def\sep{2}
	\def\a{7+\sep};
	\def\dashpart{0.9}
	\def\hrect{1.15}
	
	\draw (0, 0) -- (0, \h);
	\draw (\sep/2, 0) node{$\cdots$};
	\draw (\sep/2, \h) node{$\cdots$};
	\draw (\sep, 0) -- (\sep, \h);
	\draw (\sep+1, 0) --(\sep+2, \h);
	\draw(2*\sep+1, 0) -- (2*\sep+2, \h)node[color=black, anchor=south]{$N$};
	\draw(\sep + 1 + \sep/2, 0) node{$\cdots$};
	\draw(\sep + 2 + \sep/2, \h) node{$\cdots$};
	\draw(3*\sep + 1, 0) -- (3*\sep+2, \h) ;
	\draw(2.5*\sep + 1, 0) node{$\cdots$};
	\draw(2.5*\sep + 2, \h) node{$\cdots$};
	\draw(3*\sep+2, 0) node[anchor=north] {$n+1$} -- (\sep + 1, \h);
    \draw(\sep+1+\sep/2,-0.5)node{$\underbrace{\hspace*{10mm}}_{k}$};
\end{braid}
\end{equation}
Here $N \coloneq (\t_\bmu^\bnu)^{-1}(n+1)$, and we define $k$, as in the figure, to be the number of strings crossing string $n+1$ which reach a node $N' \leq N$ in $v_\t$ and which are not immobile according to \cref{L:OneRowImmobile}. Clearly, for $v_\t\in \check{S}^\bnu_{\blam}$ we have $k < c$ and for $v_\t\in \check{S}^\bnu_{\t_\blam^\bnu}$ we have $k \leq c$. In particular, for $v_{\t_\blam^\bnu}$ we have $k = c$. 

Note that string $n+1$ is strongly stubborn, so it must reach accessible nodes that are further and further to the right as its crossings are cut. If $k < c$ then for all non-zero summands in the right-hand side of $v_\t L = v_\t y_{n+1}^c e_\bmu^\blam =  \sum c_{\t'} v_{\t'}$, node $N' = (\t')^{-1}(n+1)$ must be to the right of $N$, and therefore $\Shape(\t')_{\downarrow n} \not\ledom \bmu$. This proves that condition \textbf{(B)} of \cref{L:EllersMurray} holds. If $k = c$, then for all non-zero summands in the right-hand side of $v_\t L = \sum c_{\t'} v_{\t'}$, node $N' = (\t')^{-1}(n+1)$ is to the right of or equal to $N$. If $N'$ is to the right of $N$ then $v_{\t'} \in \hat{S}^\bnu_\bmu$. If $N = N'$ in $\t'$, then strings $n+1, \dots, n+\gamma$ must all reach a node in the same row as $N$ and to the right of $N$ due to standardness. Indeed, $N$ is the leftmost node of $[\lambda^*]$, so all the other nodes in $[\lambda^*]$ must be reached by the strings $n+2, \dots, n+\gamma$. In this case, $\t' \in \Std^\gamma_{\t_\bmu^\bnu}(\bnu)$, so that $v_{\t'} \in \hat{S}^\bnu_{\t_\bmu^\bnu}$. This proves that condition \textbf{(A)} of \cref{L:EllersMurray} holds.

Finally we discuss condition \textbf{(C)}. We will show that $v_{\t_\blam^\bnu} L \notin \hat{S}^\bnu_\bmu$. For $v_{\t_\blam^\bnu}$ we have $k = c$, so all ways to cut the crossings of string $n+1$ in diagram \cref{E:CuttableKnots} are in $\hat{S}^\bnu_\bmu$ except the case where we cut the uppermost $c$ intersections. We show that this summand is not in $\hat{S}^\bnu_\bmu$, and this implies that condition \textbf{(C)} of \cref{L:EllersMurray} is satisfied because it is the only summand not in $\hat{S}^\bnu_\bmu$.

We first consider the outcome of cutting a single $(i_{1}, i_{1})$-crossing where string $n+1$ meets with a string reaching the first node in $[\xi^j]$, where $j \in [c]$. Suppose that $[\xi^j]$ has $l$ nodes. Then the situation after cutting the crossing is as follows.

\begin{equation} \label{E:redboxdiagramA1}
\begin{braid}
	\def\h{5};
	\def\sep{2}
	\def\a{10.5+\sep};
	\def\dashpart{0.9}
	\def\hrect{1.15}
	
	\path (1,0) coordinate (A) -- (1+\a, \h) coordinate (B);
	
	\draw (1.5 + \sep/2, 0) node {$\cdots$};
	\draw (2.5+\sep/2, \h) node[anchor=south] {$\cdots$};

	\path (3 + \sep, 0) coordinate (A) -- (7.5 + 3*\sep,\h) coordinate (B) coordinate[pos=0.35] (C);
	\path (\a -1, 0) coordinate (D) -- (1,\h) coordinate (E) coordinate[pos=0.35] (F);
	\draw (A) -- (C) -- (E) node[anchor=south]{$i_1$};
	\draw (D) -- (F) -- (B) node[anchor=south]{$i_1$};
    \draw (6.5+3*\sep,\h) node[anchor=south]{$\cdots$};
	
	\draw (4 + \sep, 0) -- (8.5 + 3*\sep,\h) node[anchor=south]{$i_2$};
	\draw (5 + 2*\sep, 0) -- (9.5 + 4*\sep,\h) node[anchor=south]{$i_l$};
	
	\draw (\a, 0) -- (2, \h) node[anchor=south]{$i_2$} ;
   	\draw (1 + \a + \sep, 0) -- (3+\sep, \h) node[anchor=south]{$i_{l}$} ;
   	\draw(\a+2+2*\sep,0) -- (4 + 2*\sep , \h) node[anchor=south]{$i_\gamma$};
    
	\draw (4.5 + \sep+\sep/2, 0) node{$\cdots$};
	\draw (\a+0.5+\sep/2, 0) node {$\cdots$};
	\draw (\a+1.5+1.5*\sep, 0) node {$\cdots$};
	\draw (2.5 +\a+ \sep/2, \h) node[ anchor = south] {$\cdots$};

	\draw (4.5 + \sep, \h) node[ anchor = south] {$\cdots$};

	\draw[red,sharp corners] (10.2, 0.3) -- (7.3, 1.7) -- (9.5, 3.1) -- (12.5,1.7) -- cycle;
	\draw[brown] (\a - 1, \h) rectangle (\a + \sep + 3.5 , \h + \hrect);
\end{braid}
\end{equation}
We apply \cref{L:MainOneRow} to the area surrounded by the red polygon. We can do this despite the strings to the right of the red box, because their residues are all different from the residues of the strings in the red box. (Alternatively, we can apply \cref{L:MainSubshape}.) We obtain
\begin{equation} \label{E:redboxdiagramA2} 
\begin{braid}
	\def\h{5};
	\def\sep{2}
	\def\a{10.5+\sep};
	\def\dashpart{0.9}
	\def\hrect{1.15}
	
	\path (1,0) coordinate (A) -- (1+\a, \h) coordinate (B);
	
	\draw (1.5 + \sep/2, 0) node {$\cdots$};
	\draw (2.5+\sep/2, \h) node[anchor=south] {$\cdots$};
	
	\path (3 + \sep, 0) coordinate (A) -- (7.5 + 3*\sep,\h) coordinate (B) coordinate[pos=0.35] (C) coordinate[pos=0.6] (G);
	\path (\a -1, 0) coordinate (D) -- (1,\h) coordinate (E) coordinate[pos=0.35] (F) coordinate[pos=0.1] (H);
	\draw (A) -- (C) -- (E) node[anchor=south]{$i_1$};
	\draw (D) -- (H) -- (G) -- (B) node[anchor=south]{$i_1$};
	
	\path (4 + \sep, 0) -- (8.5 + 3*\sep,\h) coordinate[pos = 0.265] (A) coordinate[pos = 0.54] (B)node[anchor=south]{$i_2$};
	\path (\a, 0) -- (2, \h) coordinate[pos = 0.41] (C) coordinate[pos = 0.16] (D)node[anchor=south]{$i_2$} ;
	\draw (4 + \sep, 0) -- (A) -- (C) -- (2, \h);
	\draw (\a,0) -- (D) -- (B) -- (8.5 + 3*\sep,\h);

   	\draw(\a+3+2*\sep,0) -- (5 + 2*\sep , \h) node[anchor=south]{$i_\gamma$};
   	
    	\path (5 + 2*\sep, 0) -- (9.5 + 4*\sep,\h)  coordinate[pos = 0.34] (B) coordinate[pos = 0.1] (D) node[anchor=south]{$i_l$};
   	\path (1 + \a + \sep, 0) -- (3+\sep, \h) coordinate[pos = 0.58] (C) node[anchor=south]{$i_{l}$} ;
   	\draw  (5 + 2*\sep, 0) --(D) -- (C) -- (3+\sep, \h);
   	\draw  (1 + \a + \sep, 0) --(B) -- (9.5 + 4*\sep,\h);
	\draw (2 + \a + \sep, 0) -- (4 + \sep, \h) node[yshift=5.7]{$i_{l+1}$};   	

	\draw (5.5 + \sep+\sep/2, 0) node{$\cdots$};
	\draw (\a+0.5+\sep/2, 0) node {$\cdots$};
	\draw (\a+2.5+1.5*\sep, 0) node {$\cdots$};
	\draw (2.5 +\a+ \sep/2, \h) node[ anchor = south] {$\cdots$};

	\draw (5.5 + \sep, \h) node[ anchor = south] {$\cdots$};

	\draw[red,sharp corners] (10.2, 0.3) -- (7.3, 1.7) -- (9.5, 3.1) -- (12.5,1.7) -- cycle;	
	\draw[brown] (\a - 1, \h) rectangle (\a + \sep + 3.5 , \h + \hrect);
     \draw (6.5+3*\sep,\h) node[anchor=south]{$\cdots$};
\end{braid}
.\end{equation}
We name the permutation in the above diagram $\sigma'_{\mu^*, \xi^j}$, and we use this notation to draw the above diagram simply as
$$\begin{braid}
	\def\h{12};
	\def\sep{2};
	\def\a{5+\sep};
	\def\dashpart{0.9};
	\draw (0,0) -- (0, 2);
	\draw (2, 0) node{$\dots$};
	\draw (4,0) -- (4,2);
	\draw[black, rounded corners = false] (-0.3,2) rectangle (4.3, 3.3);
	\draw[black] (2, 2.6) node[font = \fontsize{8}{8}\selectfont]{$\sigma'_{\mu^*, \xi^j}$};
	\draw (4, 3.3) -- (4, 6);
	\draw (2.5, 3.3) -- (2.5, 6);
	
	\draw( 0, 3.3) -- (0, 6);
	\draw(1.5, 3.3) -- (1.5, 6);

	\draw[brown] (2.2, 6) rectangle (4.3, 7);
	\draw (0.9, 6.5) node[black, font = \fontsize{8}{8}\selectfont]{$\mu^*$};
	\draw (3.25, 6.5) node[black, font = \fontsize{8}{8}\selectfont]{$\xi^j$};
\end{braid}.$$

This process can be repeated for each of the $c$ crossings between string $n+1$ and a string of residue $i_{1}$ reaching a node accommodating $n+1$. For a bipartition $(\mu^*, \xi^j)$ made out of two rows, let $\t_{\xi^j, \mu^*}$ be the unique tableau such that $(\t_{\xi^j, \mu^*})_{\downarrow l} = (0, \t^{\xi^j})$. Then, note that $\sigma'_{\mu^*, \xi^j}$ is the unique permutation such that $\t^{(\mu^*, \xi^j)} \sigma'_{\mu^*, \xi^j} = \t_{\xi^j, \mu^*}  \sigma_{\mu^*, \xi^j}$, where $\t^{(\mu^*, \xi^j)}$ is the initial tableau as usual. In this way, if we do not draw immobile strings, we reach a diagram like the following.

$$v' = \begin{braid}
	\def\h{12};
	\def\sep{2};
	\def\a{5+\sep};
	\def\dashpart{0.9};
	\draw (0,0) -- (0, 3);
	\draw (2, 0) node{$\dots$};
	\draw (4,0) -- (4,3);
	\draw[black, rounded corners = false] (-0.3,3) rectangle (4.3, 4.3);
	\draw[black] (2, 3.6) node[font = \fontsize{8}{10}\selectfont]{${\sigma'_{\mu^*, \lambda^*}}$};
	\draw (4, 4.3) -- (4, 12);
	\draw (2.5, 4.3) -- (2.5, 12);
	
	\draw( 0, 4.3) -- (0, 5.3);
	\draw(1.5, 4.3) -- (1.5, 5.3);
	\draw( 0, 6.6) -- (0, 12);
	\draw(1.5, 6.6) -- (1.5, 12);
	
	\draw (-2.5, 0) -- (-2.5, 5.3);
	\draw (-1, 0) -- (-1, 5.3);
	\draw(-2.5, 6.6) -- (-2.5, 7.6);
	\draw(-1, 6.6) -- (-1, 7.6);
	\draw(-2.5, 8.9) -- (-2.5, 12);
	\draw(-1, 8.9) -- (-1, 12);
	\draw[black, rounded corners = false] (-4.5,7.6) rectangle (-0.8, 8.9);
	\draw[black] (-2.6, 8.2) node[font = \fontsize{8}{10}\selectfont]{${\sigma'_{\mu^*, \xi^{c-2}}}$};

	\draw (-1.75, 0) node{$\dots$};
	\draw[black, rounded corners = false] (-2.8,5.3) rectangle (1.8, 6.6);
	\draw[black] (-0.5, 5.9) node[font = \fontsize{8}{10}\selectfont]{${\sigma'_{\mu^*, \xi^{c-1}}}$};
	
	\draw (-7.5, 0) -- (-7.5, 9.5);
	\draw (-8.25, 0) node{$\dots$};
	\draw (-9, 0) -- (-9, 9.5);
	\draw[black, rounded corners = false] (-9.3,9.5) rectangle (-5.5, 10.8);
	\draw[black] (-7.3, 10.15) node[font = \fontsize{8}{10}\selectfont]{${\sigma'_{\mu^*, \xi^{1}}}$};
	
	\draw (-7.5, 10.8) -- (-7.5, 12);
	\draw (-8.25, 0) node{$\dots$};
	\draw (-9, 10.8) -- (-9, 12);
	
	\draw (-5, 4.3) node{$\dots$};
	\draw (-5, 12.5) node{$\dots$};
	
	\draw[brown] (-9.3, 12) rectangle (-7.2, 13);
	\draw (-8.25, 12.5) node{$\mu^*$};
	\draw[brown] (-2.8, 12) rectangle (-0.7, 13);
	\draw (-1.75, 12.5) node{$\xi^{c-2}$};
	\draw[brown] (-0.3, 12) rectangle (1.8, 13);
	\draw (0.75, 12.5) node{$\xi^{c-1}$};
	\draw[brown] (2.2, 12) rectangle (4.3, 13);
	\draw (3.25, 12.5) node{$\lambda^*$};
\end{braid}$$

While the permutations $\sigma'_{\mu^*, \xi^j}$ have been defined only for bipartitions, we abuse notation and write $\sigma'_{\mu^*}$ to refer to the permutation of the strings entering the corresponding box in $v'$. We have included a concrete example of the above diagram after this proof. The diagram $v'$ is easily seen to be a diagram for a standard tableau. 

The subdiagram corresponding to $\sigma'_{\mu^*, \lambda^*}$ is always the identity, so the last $\gamma$ strings reach $\lambda^*$, and the restriction to the first $n$ strings can be seen to be $\t_\mu^*$, as introduced after the statement of the theorem.
After coming out of a subdiagram for $\sigma'_{\mu^*, \xi^{j}}$ in $v'$, a string $s$ either reaches $\xi^j$ without crossing any other strings, or it enters the subdiagram for $\sigma'_{\mu^*, \xi^{j-1}}$ next. If it reaches $\xi^j$, we say that $s$ is \emph{rightward} at  $\sigma_{\mu^*, \xi^{j}}$. Otherwise we say it is \emph{leftward} at $\sigma_{\mu^*, \xi^{j}}$. Strings only may only intersect in $\sigma_{\mu^*, \xi^{j}}$ if one of them is leftward and the other is rightward. Since a rightward string at $\sigma_{\mu^*, \xi^{j}}$ does not intersect any other strings after the subdiagram for $\sigma_{\mu^*, \xi^{j}}$ before reaching $\xi^j$, it follows that there are no double crossings between any two strings. It follows from this and the fact that $v'$ is the diagram for a standard tableau that $v'$ is a standard monomial of $S^\bnu$.

Moreover, a string $s$ in $v'$ can be rightward at most at one subdiagram $\sigma_{\mu^*, \xi^{j}}$. If $s$ is rightward at some $\sigma_{\mu^*, \xi^{j}}$, then it does not intersect any strings after $\sigma_{\mu^*, \xi^{j}}$. Otherwise, $s$ is leftward at each $\sigma_{\mu^*, \xi^{j}}$. It follows that the pattern
$$
\begin{braid}
        	\draw (0,0) -- (4, 4);
        	\draw (4,0)  -- (0,4);
        	\draw (2, 0) node[anchor=north]{$s$} -- (4, 2) -- (2, 4);
\end{braid},
$$
cannot be found inside $v'$, as it requires that $s$ be rightward first (lower down) and then leftward afterwards (higher up). With this we have shown that $v'$ is a basis element of $S^\bnu$. Moreover, $v' \in \hat{S}^\bnu_{\t_\bmu^\bnu} \setminus \hat{S}^\bnu_{\bmu}$, so that condition \textbf{(C)} holds. Moreover, a restriction to the first $n$ strings of $v'$ yields $\theta(v_{\t^\blam}) = v_{\t_\bmu^*}$, completing the proof.

%After we repeat this $c$ times, we reach a monomial $v'$. At every step~\cref{E:redboxdiagramA2}, it is easy to see that we have not created any double crossings, so that the expression for $v'$ is reduced. Let $\t'$ be the tableau associated with $v'$. Then only numbers in removable $i_{n+1}$-strips of length at most $\gamma$ between $\bnu \backslash \blam$ and $\bnu\backslash \bmu$ have been changed. In each instance, these numbers have increased, except for the numbers in $\bnu\backslash \blam$. Therefore, a break of standardness is only possible at $\bnu\backslash \blam$. But the numbers at the nodes below $\bnu\backslash \blam$ are all lower than the numbers which have changed places. Therefore, tableau $\t'$ is standard, and it follows that $v'$ is a standard monomial. In particular, it cannot equal zero, so that \textbf{(C)} holds, and we may apply \cref{L:EllersMurray}. This shows that there is a graded homomorphism from $S^\blam$ to $S^\bmu$. Moreover, since $v_{\t_\blam^\bnu}L$ gives a single standard monomial (modulo terms in $\hat{S}_\bmu^\bnu$), it follows that the homomorphism sends the Specht generator $\t^\blam$ to a single standard monomial. which proves \cref{T:StandardMonomialOneRow}.

We refer to~\cite{lm14} for the proof of the formula for the degree of the map and a proof of its positivity. Alternatively, the proof of \cref{T:MainStraight} includes a proof of the degree formula which specializes to the degree formula in \cref{T:MainOneRow} in the single-row case, although the arguments required to establish positivity are much more involved in the more general case. 
\end{proof}

\begin{eg} We replicate the diagram in \cref{E:egDiagramRow} with dashed green lines used for immobile strings and red rectangular outlines highlighting each $\sigma'_{\mu^*, \xi^j}$. Note that the extra strings reaching $[\lambda^*]$ were not drawn in \cref{E:egDiagramRow}, but we draw them here in order to highlight $\sigma'_{\mu^*, \lambda^*}$.

\begin{equation}
 \begin{braid}
	\def\h{6};
	\def\sep{1}
	\def\a{15+\sep};
	\def\dashpart{0.9}
	
	\draw[green!60!black!, dashed, thin] (0, 0) -- (3 + \sep, \h) node[anchor = south]{$2$};
	\draw[green!60!black!, dashed, thin] (1, 0) -- (4 + \sep, \h) node[anchor = south]{$3$};
	\path (2, 0) -- (5 + \sep, \h) coordinate[pos = 0.73] (A);
	\draw (2, 0) -- (A) -- (0, \h) node[anchor = south]{$0$} ;
	\draw[green!60!black!, dashed, thin] (3+\sep, 0) -- (6 + 2*\sep, \h) node[anchor = south]{$1$};
	\draw[green!60!black!, dashed, thin] (4+\sep, 0) -- (7 + 2*\sep, \h) node[anchor = south]{$2$};
	\path (5+2*\sep, 0) -- (8 + 3*\sep, \h) coordinate[pos = 0.52] (B);
	\draw (5+2*\sep, 0) -- (B) -- (A) -- (5+\sep,\h) node[anchor = south]{$0$};
	\path (6+2*\sep, 0) -- (9 + 3*\sep, \h) coordinate[pos=0.525] (C);
	\draw (6+2*\sep, 0) -- (C) -- (1, \h) node[anchor = south]{$1$};
	
	\draw[green!60!black!, dashed, thin] (7 + 3*\sep, 0) -- (10 + 4*\sep, \h) node[anchor = south]{$1$};
	\draw[green!60!black!, dashed, thin] (8 + 3*\sep, 0) -- (11 + 4*\sep, \h) node[anchor = south]{$2$};
	\draw[green!60!black!, dashed, thin] (9 + 3*\sep, 0) -- (12 + 4*\sep, \h) node[anchor = south]{$3$};
	
	\path (11 + 4*\sep, 0) -- (14 + 5*\sep, \h) coordinate[pos=0.19] (F);
	\path (11 + 4*\sep, 0) -- (F) -- (0,\h) coordinate[pos = 0.35] (G);
	
	\path (12 + 4*\sep, 0) -- (15 + 5*\sep, \h) coordinate[pos=0.19] (E);
	\path (12 + 4*\sep, 0) -- (E) -- (1,\h) coordinate[pos = 0.35] (H);
	\draw (12.75 + 4*\sep, 0) -- (E) -- (H) -- (9+3*\sep,\h) node[anchor = south]{$1$};
	\path (13 + 4*\sep, 0) -- (16 + 5*\sep, \h) coordinate[pos=0.19] (D);
	\draw (13.75 + 4*\sep, 0) -- (D) -- (2, \h) node[anchor = south]{$2$};
	\path (10 + 3*\sep, 0) -- (13 + 4*\sep, \h) coordinate[pos=0.27] (I) node[anchor = south]{$0$};
	\draw (10 + 3*\sep, 0) -- (I) -- (G) -- (8+3*\sep,\h) node[anchor = south]{$0$};
	\draw (11.75 + 4*\sep, 0) -- (F) -- (I) --  (13 + 4*\sep, \h) node[anchor = south]{$0$};
	
	\draw (14 + 5*\sep, 0) -- (14 + 5*\sep, \h) node[anchor = south]{$0$};
	\draw (15 + 5*\sep, 0) -- (15 + 5*\sep, \h) node[anchor = south]{$1$};
	\draw (16 + 5*\sep, 0) -- (16 + 5*\sep, \h) node[anchor = south]{$2$};
	
	\draw[red,sharp corners] (4,4) -- (6,4) -- (6,5.3) -- (4,5.3) -- cycle;
	\draw[red,sharp corners] (8,2.5) -- (12,2.5) -- (12,4) -- (8,4) -- cycle;
	\draw[red,sharp corners] (13.4,1.3) -- (15,1.3) -- (15,2.8) -- (13.4,2.8) -- cycle;
	\draw[red,sharp corners] (15.4,0.1) -- (21.5,0.1) -- (21.5,1.3) -- (15.4,1.3) -- cycle;

	\draw[brown] (-0.5, \h) rectangle (2.5 , \h + 1);
	\draw[brown] (2.5+\sep, \h) rectangle (5.5 + \sep, \h + 1);
	\draw[brown] (5.5 + 2*\sep, \h) rectangle (7.5 + 2*\sep , \h + 1);
	\draw[brown] (7.5 + 3*\sep, \h) rectangle (9.5 + 3*\sep , \h + 1);
	\draw[brown] (9.5 + 4*\sep, \h) rectangle (13.5 + 4*\sep , \h + 1);
	\draw[brown] (13.5 + 5*\sep, \h) rectangle (16.5 + 5*\sep , \h + 1);
\end{braid}
\end{equation}
\end{eg}

\begin{rem}
    This provides a different way to compute a homomorphism compared to the proof in \cite{lm14}. It is not clear that the homomorphisms described in both proofs should be identical, but we have found them to be equal (up to sign) in all the cases we have checked. If $\ell = 1$, it is known that $\dim \Hom_{R_n^\Lambda}(S_\lambda, S_\mu) = 1$ whenever $\lambda$ and $\mu$ form a Carter--Payne pair in the sense of \cref{T:Carter-Payne} (see Lyle~\cite{Lyle07} and Dixon~\cite{dixon} for this result), so that two homomorphisms may only differ by a multiplication by a scalar. 
\end{rem}

\section{Moving Straight Shapes} \label{S:Straight}

Let $\bnu$ be any multipartition, and suppose we have two shapes $[\xi], [\rho] \subset [\bnu]$. We say that $[\xi], [\rho]$ are \emph{congruent} if the sets of nodes are in correspondence, so that $[\xi]$ can be obtained from $[\rho]$ by a single translation, and the residues of the corresponding nodes match. Now let $\blam$ and $\bmu$ be multipartitions of~$n$ and suppose that
  $\bmu\gdom\blam$ and that $\bnu=\blam\cup\bmu$ is a multipartition of
  $n+\gamma$ such that $\bnu\backslash\blam$ and $\bnu\backslash\bmu$ are congruent
  removable $e$-small straight shapes. Write $[\lambda^*] = [\bnu] \backslash [\bmu]$ and $[\mu^*] = [\bnu] \backslash [\blam]$, and label the maximal removable $e$-small straight shapes which are subshapes of $[\mu^*]$ between $[\mu^*]$ and $[\lambda^*]$ as $[\mu^*] = [\xi^0] \prec [\xi^1] \prec \dots \prec [\xi^{c-1}] \prec [\xi^c] = [\lambda^*]$. We assume throughout the section that $R^\Lambda_{n+\gamma}$ is a quiver Hecke algebra of type $A_{e-1}^{(1)}$ with $e \in \mathbb{Z}_{\geq 2} \cup \{ \infty \}$ defined over a field of characteristic $p$, or over $\mathbb{Z}$. Let $A = \{N \in \Add(\bnu)\mid [\mu^*] \prec N \preceq [\lambda^*] \}$ and let $B = \{N \in \Rem(\bnu)\mid \mu \prec N \preceq \lambda \}$. Finally, for a straight shape $[\xi]$, define $\Rank([\xi])$ to be the number of nodes in the main diagonal of $[\xi]$ (that is, the nodes in $[\xi]$ inside the same diagonal as the top-left node of $[\xi]$).

\begin{thm} \label{T:MainStraight}
Under the conditions above, there exists a non-zero homomorphism $\theta \in \Hom_{R_n^\Lambda}(S^\blam\<a-b+2d\>,S^\bmu)$, where
\begin{align*}
    a&= \sum_{N \in A} \#\{N' \in [\mu^*] \mid \res N' = \res N \},\\
    b&= \sum_{N \in B} \#\{N' \in [\mu^*] \mid \res N' = \res N \},\\
    d&= \sum_{j \in \{1, \dots, c\}} \Rank[\xi^j].
  \end{align*}
\end{thm}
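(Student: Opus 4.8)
The plan is to follow the same strategy used for the one-row case in \cref{T:MainOneRow}, namely to construct an auxiliary multipartition $\bnu = \blam \cup \bmu$, build the tableaux $\t^\bnu_\blam, \t^\bnu_\bmu \in \Std(\bnu)$, and exhibit a homogeneous element $L \in R^\Lambda_{n+\gamma}$ commuting with $R^\Lambda_n$ satisfying the three conditions of \cref{L:EllersMurray}. The degree formula will then fall out of the formula $d = (\deg\t_\blam^\bnu - \deg\tlam) - (\deg\t_\bmu^\bnu - \deg\tmu) + \deg L$ in that lemma. The natural candidate for $L$, generalizing $y_{n+1}^c e^\blam_\bmu$, is a product $\prod_{j=1}^{c} (\text{dot monomial on the main-diagonal strings of }[\xi^j])\, e^\blam_\bmu$ with total $y$-degree $2d = 2\sum_j \Rank[\xi^j]$; roughly, each maximal $e$-small subshape $[\xi^j]$ contributes one dot on each of its $\Rank[\xi^j]$ diagonal strings, forcing those strings to move rightward past the corresponding crossings just as in the one-row argument.

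The key steps, in order, are: (1) set up the combinatorics of the chain $[\mu^*] = [\xi^0] \prec \dots \prec [\xi^c] = [\lambda^*]$ of maximal $e$-small straight subshapes, and define $\t^*_\bmu$ explicitly via permutations $\sigma_{\mu^*, \xi^j}$ that carry the entries of $[\xi^j]$ in $\t^\bnu_\blam$ to $\{n+1, \dots, n+\gamma\}$ in order, exactly as in \cref{S:Rows}; (2) prove the analogue of \cref{L:OneRowImmobile} — that strings reaching nodes of $[\mu^*]$ not lying in any $[\xi^j]$ are immobile — using the standardness obstruction; (3) prove a ``straight-shape'' generalization of the computational \cref{L:MainOneRow}: when a bundle of strings of residues filling a straight shape passes completely across another such bundle, the diagram simplifies (after discarding zero terms via \cref{E:Specht} and applying \cref{E:PsiSquared,E:Braid}), producing a clean permutation $\sigma'_{\mu^*, \xi^j}$; this is presumably the content referred to as \cref{L:MainSubshape}; (4) verify conditions \textbf{(A)}, \textbf{(B)}, \textbf{(C)} of \cref{L:EllersMurray} by the strong-stubbornness machinery of \cref{S:Stubborn} (in particular \cref{L:StronglyStubbornSet,L:Stubborn3Sets}), showing that the only surviving summand of $v_{\t^\bnu_\blam} L$ not in $\hat S^\bnu_\bmu$ is a single standard basis element $v'$, whose restriction to the first $n$ strings is $v_{\t^*_\bmu}$; (5) assemble the degree count, checking that $\deg L = 2d$ and that the tableau-degree differences contribute $a - b$, and finally establish positivity of $a - b + 2d$.

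The main obstacle I expect is step (3) together with the positivity part of step (5). For step (3), in the one-row case the strings being moved all have distinct residues within a single row-strip of length $< e$, so \cref{E:PsiSquared} lets one slide freely; for a genuinely two-dimensional straight shape one must carefully control the order in which crossings are resolved along \emph{each} diagonal and argue that the error terms produced by \cref{E:Braid,E:DotSlide} either vanish by a Specht or Garnir relation (\cref{E:Specht,E:Garnir}) or cancel — the $e$-smallness hypothesis is what prevents a residue from repeating around the full quiver and is surely essential here, but bookkeeping the interaction of several overlapping $\Gamma$-shaped hooks inside $[\xi^j]$ will be delicate. For positivity, unlike the one-row case the quantity $a - b + 2d$ is a sum of contributions from a chain of subshapes of varying rank, and one must show the rank terms $2\sum_j \Rank[\xi^j]$ dominate the possibly-negative $a - b$; I would prove this by an inductive argument over the chain $[\xi^0] \prec \dots \prec [\xi^c]$, relating each single ``promotion'' $[\xi^{j-1}] \to [\xi^j]$ to the one-row positivity bound already established in \cref{T:MainOneRow} applied diagonal-by-diagonal, so that the new rank contribution always outweighs the change in $a - b$ incurred at that step.
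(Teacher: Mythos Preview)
Your overall architecture is right --- build $\bnu = \blam \cup \bmu$, pick $L$ in $R^\Lambda_{n+\gamma}$, and invoke \cref{L:EllersMurray} --- but your proposed $L$ has a fatal defect: it does not commute with $R^\Lambda_n$. You suggest taking $L$ to be a product of dots on the specific strings that reach the main diagonals of the $[\xi^j]$; for $j\ge1$ those are \emph{basic} strings (among $1,\dots,n$), and a bare monomial $\prod y_{s}$ on a fixed subset of basic strings does not commute with the $\psi_r$'s of $R^\Lambda_n$. In the one-row case this issue never arose because $y_{n+1}$ sits on an extra string. The paper's fix is to take $L = \varepsilon_m^d\, e^\blam_\bmu$, where $\varepsilon_m^d$ is the elementary symmetric polynomial of degree $d$ on \emph{all} basic $i_1$-strings; symmetry is exactly what restores the commutation with $R^\Lambda_n$. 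This single change has ripple effects throughout the argument, because $\varepsilon_m^d$ spreads dots over many strings you did not intend to dot, and you must show all the unwanted summands die.

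Consequently the verification of \textbf{(A)} and \textbf{(B)} is substantially harder than you anticipate, and the stubbornness lemmas from \cref{S:Stubborn} are not enough on their own. The paper needs two further ingredients you have not identified: (i) an immobility criterion (\cref{L:ImmobileResidue}) saying that any basic string reaching a node north-west of a node whose residue lies outside $[\mu^*]$ is immobile, which lets one restrict attention to the maximal $e$-small shapes $[\rho^j]$; and (ii) a max-flow bound (\cref{L:ProofMaxFlow}) saying that applying more than $\min(r,k)$ dots to basic $i_1$-strings reaching a fixed shape $[\rho]$ kills the diagram, where $r,k$ count certain non-immobile basic and extra strings. This is what forces exactly $d_{\rho^j}$ dots onto each shape and makes all other summands of $\varepsilon_m^d$ vanish. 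For \textbf{(C)} the paper further needs \cref{L:DotsHom} (giving an $R_\gamma$-homomorphism $S^{\lambda^*}\to S^\bnu_\gamma$) and \cref{L:StandardBitableaux} (a tableau-combinatorial obstruction) to rule out contributions from $[\rho^j]$'s that are not subshapes of $[\mu^*]$, before \cref{L:MainSubshape} (your step (3)) can be applied shape-by-shape.

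Finally, your proposed positivity argument --- induct along the chain and reduce each step to the one-row bound --- is not what the paper does and would be hard to make precise, since adjacent $[\xi^{j-1}],[\xi^j]$ need not differ by a row-strip. The paper instead proves $a_\xi - b_\xi + 2d_\xi \ge 0$ for each maximal $e$-small shape $[\xi]$ individually, via a careful pairing of addable and removable nodes along the rim hook of $[\xi]$ and a lower bound $d_\xi \ge h$ coming from diagonal counts; strict positivity comes from the single shape $[\lambda^*]$, where a result of Nazarov--Tarasov gives $a_{\lambda^*}-b_{\lambda^*}+2d_{\lambda^*}=\Rank(\mu^*)$.
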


The tableau $\t^*_\bmu \in \Std(\bmu)$ such that $\theta(v_{\t^\blam}) = v_{\t^*_\bmu}$ is described similarly to the previous section. Let $j \in \{1, \dots, c\}$, and let $[\xi^j_{\mu^*}] \subset [\mu^*]$ be the set of nodes inside $[\mu^*]$ that can be obtained from $[\mu^*]$ by successively removing nodes until reaching a shape with the same underlying partition as $[\xi^j]$. Let $\sigma_{\xi^j, \mu^*}$ be the permutation that exchanges the numbers appearing in $[\xi^j]$ in tableau $\t^\bnu_\blam$ with the numbers $\t_\blam^\bnu([\xi^j_{\mu^*}])$, in order. Then we have $\t_\bmu^* = \left(\t_\blam^\bnu \sigma_{\mu^*, \xi^1} \dots \sigma_{\mu^*, \xi^{c-1}} \sigma_{\mu^*, \lambda^*}\right)_{\downarrow n}$.

\begin{eg} \label{Eg:MainStraight}
Let $p = 0$, $e = 9$ and $\Lambda = (0, 4, 6)$. Suppose $\blam = ((5,5,4,3,3,1), (2,2,1), (3,3,2)) \vdash 34$ and $\bmu = ((8,7,4,3,3,1), (2,2,1), (3)) \vdash 34$, so that $\bnu = ((8,7,4,3,3,1), (2,2,1), (3,3,2)) \vdash 39$. We draw $\bnu$ with the residues corresponding to each node.
\begin{equation*}
    \left(\,\ShadedTableau[(6,0),(7,0),(8,0), (6, -1), (7, -1)]{{0,1,2,3,4,5,6,7}, {8,0,1,2,3,4,5},{7,8,0,1},{6,7,8},{5,6,7},{4}}{10}, \quad \ShadedTableau{{4,5},{3,4},{2}}{10}, \quad\ShadedTableau[(1,-1),(2,-1),(3,-1),(1,-2),(2,-2)]{{6,7,8},{5,6,7},{4,5}}{10}\,\right)
\end{equation*}
Here $[\mu^*]$ and $[\lambda^*]$ have been shaded in. It is easy to see that $a = 4, b = 6$ and $d = 4$, so that $\deg \theta = a - b + 2d = 6$. Let $\theta \in \Hom_{R_n^\Lambda}(S\<6\>, S^\bmu)$ be the homomorphism given by the theorem. We have
\begin{equation*}
    \t_\blam^\bnu = \left(\,\ShadedTableau[(6,0),(7,0),(8,0), (6, -1), (7, -1),(1,-4),(2,-4),(3,-4),(1,-5)]{{1,2,3,4,5,35,36,37}, {6,7,8,9,10,38,39},{11,12,13,14},{15,16,17},{18,19,20},{21}}{10}, \quad \ShadedTableau[(2,0),(2,-1)]{{22,23},{24,25},{26}}{10}, \quad\ShadedTableau[(1,-1),(2,-1),(3,-1),(1,-2),(2,-2)]{{27,28,29},{30,31,32},{33,34}}{10}\,\right)
\end{equation*}
where we have shaded in the shapes $[\mu^*] = [\xi^0] \prec \dots \prec [\xi^3] = [\lambda^*]$. Numbers outside these shapes do not change positions. Next we apply the permutations $\sigma_{\mu^*, \xi^j}$ in order. We have
\begin{equation*}
    \t_\blam^\bnu \sigma_{\mu^*, \xi^1} \dots \sigma_{\mu^*, \lambda^*} = \left(\,\ShadedTableau[(6,0),(7,0),(8,0), (6, -1), (7, -1),(1,-4),(2,-4),(3,-4),(1,-5)]{{1,2,3,4,5,18,19,20}, {6,7,8,9,10,21,34},{11,12,13,14},{15,16,17},{23,31,32},{25}}{10}, \quad \ShadedTableau[(2,0),(2,-1)]{{22,30},{24,33},{26}}{10}, \quad\ShadedTableau[(1,-1),(2,-1),(3,-1),(1,-2),(2,-2)]{{27,28,29},{35,36,37},{38,39}}{10}\,\right)
\end{equation*}
The tableau $\t_\bmu^*$ is obtained simply by restricting to the first $n$ positive integers, and we have $\theta(v_{\t_\blam^\bnu}) = v_{\t_\bmu^*}$.

Finally, \cref{C:Degen} shows that if we set $e = p = 3$, there is also a non-zero homomorphism $\overline{\theta} \in \Hom_{R^\Lambda_{n,3}(\mathbb{F}_3)}(S^\blam, S^\bmu)$.
\end{eg}

We will prove the theorem at the end of the section, after introducing several technical lemmas that will aid us. As a quick shorthand, we refer to the first $n$ strings in a monomial $v\in S^\bnu$ as \emph{basic} strings and to the last $\gamma$ strings as \emph{extra} strings. First we prove an analogous result to \cref{L:MainOneRow}.

\begin{lem} \label{L:MainStraight}
Suppose $[\xi]$ is an $e$-small straight shape with rows $\xi = (\xi_1, \xi_2, \dots, \xi_k)$ and residues $i_1, \dots, i_{\xi_1}$ in the first row, $i_0, \dots, i_{\xi_2 - 1}$ in the second row, and so on, up until $i_{2 - k}, \dots, i_{\xi_k - (k - 1)}$ in the last row. Consider the following diagram where the shape in the top right is $[\xi]$.
$$     \begin{braid}
	\def\h{6};
	\def\sep{1.7}
	\def\a{15+\sep};
	\def\dashpart{0.9}

	\draw (1,0) -- (1+\a, \h)node[anchor=south]{$i_{1}$};
	\draw (1.5 + \sep/2, 0) node {$\cdots$};
	\draw (1.5 + \sep/2, \h) node[ anchor = south] {$\cdots$};
	\draw (2+\sep, 0) -- (2+\sep + \a, \h) node[anchor=south]{$i_{\xi_1}$} ;
	
	\draw (3 + 2*\sep, 0) -- (3 + 2*\sep +\a, \h) node[anchor=south]{$i_{1}^{-1}$};
	\draw (3.5 + 2.5*\sep, 0)  node{$\cdots$};
	\draw (3.5 + 2.5*\sep +\a, \h) node[anchor=south]{$\cdots$};
	\draw (4 + 3*\sep, 0) -- (4 + 3*\sep +\a, \h) node[anchor=south]{$i_{\xi_2}^ {-1}$};
	
	\draw (4 + 4*\sep, 0)  node{$\cdots$};
	\draw (4 + 4*\sep +\a, \h) node[anchor=south]{$\cdots$};
	\draw (4 + 4*\sep+\a, 0)  node{$\cdots$};
	\draw (4 + 4*\sep , \h) node[anchor=south]{$\cdots$};
	
	\draw (5 + 4.5*\sep, 0) -- (5 + 4.5*\sep + \a, \h) node[anchor=south]{$i_{1}^{1-k}$};
	\draw (5.5 + 5*\sep, 0)  node{$\cdots$};
	\draw (5.5 + 5*\sep +\a, \h) node[anchor=south]{$\cdots$};
	\draw (6 + 5.5*\sep, 0) -- (6 + 5.5*\sep +\a, \h) node[anchor=south]{$i_{\xi_{k}}^{1-k}$};
	
	\draw (1 + \a, 0) -- (1, \h) node[anchor=south]{$i_1$} ;
       \draw (2 + \a + \sep, 0) -- (2+\sep, \h) node[anchor=south]{$i_{\xi_1}$} ;
	\draw (1.5 +\a+ \sep/2, 0) node {$\cdots$};
	\draw (1.5 +\a+ \sep/2, \h) node[ anchor = south] {$\cdots$};
	
	\draw (3 + 2*\sep + \a, 0) -- (3 + 2*\sep, \h) node[anchor=south]{$i_{1}^{-1}$};
	\draw (3.5 + 2.5*\sep + \a, 0)  node{$\cdots$};
	\draw (3.5 + 2.5*\sep, \h) node[anchor=south]{$\cdots$};
	\draw (4 + 3*\sep + \a, 0) -- (4 + 3*\sep, \h) node[anchor=south]{$i_{\xi_2}^{-1}$};
	
	\draw (5 + 4.5*\sep + \a, 0) -- (5 + 4.5*\sep, \h) node[anchor=south]{$i_{1}^{1-k}$};
	\draw (5.5 + 5*\sep + \a, 0)  node{$\cdots$};
	\draw (5.5 + 5*\sep, \h) node[anchor=south]{$\cdots$};
	\draw (6 + 5.5*\sep + \a, 0) -- (6 + 5.5*\sep, \h) node[anchor=south]{$i_{\xi_{k}}^{1-k}$};
	
	\draw[brown] (\a + 0.5, \h) rectangle (\a + 0.5 + 2 + \sep , \h + 1.4);
	\draw[brown] (\a + 2.2 + 2*\sep, \h) rectangle (\a + 2.5 + 2.2 + 3*\sep , \h + 1.4);
	\draw[brown] (\a + 4.7 + 4*\sep, \h) rectangle (\a + 5 + 3 + 5*\sep , \h + 1.4);
\end{braid}$$
We have used the notation $i_\square^{\text{\scalebox{0.8}{$\bigcirc$}}}$, meaning $i_{\square} + \text{\scalebox{0.8}{$\bigcirc$}}$, for graphical clarity. Suppose we apply a dot (from the bottom) to each string which reaches a node in the main diagonal of $\xi$ in the top right. Then we reach (up to sign) the following diagram.
$$     \begin{braid}
	\def\h{6};
	\def\sep{1.7}
	\def\a{15+\sep};
	\def\dashpart{0.9}
	
	\draw (1,0) -- (1, \h)node[anchor=south]{$i_{1}$};
	\draw (1.5 + \sep/2, 0) node {$\cdots$};
	\draw (1.5 + \sep/2, \h) node[ anchor = south] {$\cdots$};
	\draw (2+\sep, 0) -- (2+\sep, \h) node[anchor=south]{$i_{\xi_1}$} ;
	
	\draw (3 + 2*\sep, 0) -- (3 + 2*\sep, \h) node[anchor=south]{$i_{1}^{-1}$};
	\draw (3.5 + 2.5*\sep, 0)  node{$\cdots$};
	\draw (3.5 + 2.5*\sep +\a, \h) node[anchor=south]{$\cdots$};
	\draw (4 + 3*\sep, 0) -- (4 + 3*\sep, \h) node[anchor=south]{$i_{\xi_2}^ {-1}$};
	
	\draw (4 + 4*\sep, 0)  node{$\cdots$};
	\draw (4 + 4*\sep +\a, \h) node[anchor=south]{$\cdots$};
	\draw (4 + 4*\sep+\a, 0)  node{$\cdots$};
	\draw (4 + 4*\sep , \h) node[anchor=south]{$\cdots$};
	
	\draw (5 + 4.5*\sep, 0) -- (5 + 4.5*\sep, \h) node[anchor=south]{$i_{1}^{1-k}$};
	\draw (5.5 + 5*\sep, 0)  node{$\cdots$};
	\draw (5.5 + 5*\sep +\a, \h) node[anchor=south]{$\cdots$};
	\draw (6 + 5.5*\sep, 0) -- (6 + 5.5*\sep, \h) node[anchor=south]{$i_{\xi_{k}^{1-k}}$};
	
	\draw (1 + \a, 0) -- (1+ \a, \h) node[anchor=south]{$i_1$} ;
       \draw (2 + \a + \sep, 0) -- (2+\sep+ \a, \h) node[anchor=south]{$i_{\xi_1}$} ;
	\draw (1.5 +\a+ \sep/2, 0) node {$\cdots$};
	\draw (1.5 +\a+ \sep/2, \h) node[ anchor = south] {$\cdots$};
	
	\draw (3 + 2*\sep + \a, 0) -- (3 + 2*\sep+ \a, \h) node[anchor=south]{$i_{1}^{-1}$};
	\draw (3.5 + 2.5*\sep + \a, 0)  node{$\cdots$};
	\draw (3.5 + 2.5*\sep, \h) node[anchor=south]{$\cdots$};
	\draw (4 + 3*\sep + \a, 0) -- (4 + 3*\sep+ \a, \h) node[anchor=south]{$i_{\xi_2}^{-1}$};
	
	\draw (5 + 4.5*\sep + \a, 0) -- (5 + 4.5*\sep+ \a, \h) node[anchor=south]{$i_{1}^{1-k}$};
	\draw (5.5 + 5*\sep + \a, 0)  node{$\cdots$};
	\draw (5.5 + 5*\sep, \h) node[anchor=south]{$\cdots$};
	\draw (6 + 5.5*\sep + \a, 0) -- (6 + 5.5*\sep+ \a, \h) node[anchor=south]{$i_{\xi_{k}}^{1-k}$};
	
	\draw[brown] (\a + 0.5, \h) rectangle (\a + 0.5 + 2 + \sep , \h + 1.4);
	\draw[brown] (\a + 2.2 + 2*\sep, \h) rectangle (\a + 2.5 + 2.2 + 3*\sep , \h + 1.4);
	\draw[brown] (\a + 4.7 + 4*\sep, \h) rectangle (\a + 5 + 3 + 5*\sep , \h + 1.4);
\end{braid}.$$

\end{lem}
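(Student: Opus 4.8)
The plan is to prove \cref{L:MainStraight} by induction on $\Rank[\xi]$, peeling off the principal hook of $[\xi]$ at each step. Write $H$ for the principal hook of $[\xi]$, that is, the hook formed by the first row and the first column; it contains the top-left node of $[\xi]$, whose residue is $i_1$ and on which one of our dots sits. The key combinatorial observation is that $[\xi''] \coloneq [\xi] \setminus H$ is again an $e$-small straight shape, that its top-left node again has residue $i_1$, and that its main diagonal is the main diagonal of $[\xi]$ with the node $(1,1)$ deleted. Hence $\Rank[\xi''] = \Rank[\xi] - 1$, and the dots placed on $[\xi]$ restrict to exactly the dots prescribed by the lemma for $[\xi'']$, the dot on $(1,1)$ being the single ``extra'' dot carried by $H$.

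For the base case $\Rank[\xi] = 1$ the shape $[\xi]$ is a single hook and, because $[\xi]$ is $e$-small, all of its residues are pairwise distinct. I would push the dot on the $i_1$-string upward using \cref{E:DotSlide}; the only crossing at which this generates an error term is the crossing with the (unique) string of residue $i_1$ that does not reach $[\xi]$, and that error term simply breaks that crossing, while in the surviving branch the dot travels to the top of the string entering the first node of the first row of $[\xi]$, where it is killed by \cref{E:Specht}. One is then left to unwind a crossing pattern among strings of pairwise distinct residues, which can be carried out by repeated use of the braid relation \cref{E:Braid} and of \cref{E:PsiSquared} — essentially always in their ``otherwise'' branches, as no $(i,i)$-crossing remains — exactly as in the proof of \cref{L:MainOneRow}, each spurious summand vanishing by \cref{E:Specht} or \cref{E:Garnir} because it would trap a crossing or a dot inside a brown box, or realise a forbidden Garnir configuration across two adjacent rows. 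For the inductive step I would run this same analysis (invoking \cref{L:MainOneRow} itself for the arm of $H$) to route the strings destined for $H$ straight into $H$, leaving a diagram of exactly the form treated by the lemma but with $[\xi]$ replaced by $[\xi'']$, to which the inductive hypothesis applies.

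The real work is the bookkeeping of summands and signs. Every application of \cref{E:DotSlide}, \cref{E:Braid} or \cref{E:PsiSquared} spawns extra terms, and one must check that each vanishes — always by recognising, after a short chain of isotopies, a crossing or a dot confined to a brown box (\cref{E:Specht}) or a Garnir configuration spanning two consecutive brown boxes (\cref{E:Garnir}). The $e$-smallness hypothesis is precisely what makes this possible: it guarantees that the residues along any row or column of $[\xi]$ never wrap around $\mathbb{Z}/e\mathbb{Z}$, so that every crossing that arises is of one of the simple types in \cref{E:PsiSquared,E:Braid} and the argument never degenerates. The signs produced by the dot-slides and braid moves are not tracked, which is the source of the ``up to sign'' in the statement.
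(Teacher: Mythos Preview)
Your inductive scheme --- peel off the principal hook and reduce $\Rank[\xi]$ by one --- is genuinely different from the paper's, which instead inducts on the number of rows $k$: it argues that the last $\xi_k$ strings are immobile in the upper subdiagram, applies the inductive hypothesis to the first $k-1$ rows, and then runs a second inner induction (again on $k$) to straighten out the interaction between rows $k-1$ and $k$ via a Garnir-type cancellation. The row induction is well adapted to the diagram because rows of $[\xi]$ correspond to \emph{contiguous} blocks of strings and because both the Specht relations \cref{E:Specht} and the Garnir relation \cref{E:Garnir} are phrased in terms of rows; the immobility argument for row $k$ hinges precisely on the end-of-row residues of rows $1,\dots,k-1$ lying strictly to the north-east of every residue in row $k$.

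Your hook-peeling scheme does not enjoy this structural alignment, and there is a concrete gap. The strings destined for the principal hook $H$ are \emph{not} contiguous: the first-column strings sit at positions $\xi_1+1,\;\xi_1+\xi_2+1,\;\dots$ interleaved with the $[\xi'']$-strings. More seriously, both $H$ and $[\xi'']$ carry $i_1$-strings (the node $(1,1)\in H$ and every main-diagonal node of $[\xi'']$ have residue $i_1$), so the dot on the $(1,1)$-string interacts, via \cref{E:DotSlide}, with the $(i_1,i_1)$-crossings belonging to the $[\xi'']$-part of the computation; you cannot cleanly ``route the strings destined for $H$ straight into $H$'' without first disentangling these. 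Your base case is also more than advertised: a rank-$1$ hook with $k>1$ rows is not a single row, so \cref{L:MainOneRow} does not apply directly, and untangling the leg of the hook already requires an argument of the same flavour as the paper's inner induction. The approach might be salvageable, but as written it does not supply the mechanism that replaces the paper's immobility-of-the-last-row step, and that step is where the real work lies.
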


\begin{proof}We will perform an induction on the number of rows $k$. The case $k = 1$ follows easily from \cref{L:MainOneRow}, as the dot on the $i_1$-string breaks the unique $(i_1,i_1)$-crossing to reach the main diagram in that lemma. Suppose we have proved the theorem up to $k-1$ rows. In the next diagram, we focus on the subdiagram above the orange dotted line, which we call $v$.
$$    \begin{braid}
	\def\h{6};
	\def\sep{1.7}
	\def\a{15+\sep};
	\def\dashpart{0.9}
	
	\draw (1,0) -- (1+\a, \h)node[anchor=south]{$i_{1}$};
	\draw (1.5 + \sep/2, 0) node {$\cdots$};
	\draw (1.5 + \sep/2, \h) node[ anchor = south] {$\cdots$};
	\draw (2+\sep, 0) -- (2+\sep + \a, \h) node[anchor=south]{$i_{\xi_1}$} ;
	
	\draw (3 + 2*\sep, 0) -- (3 + 2*\sep +\a, \h) node[anchor=south]{$i_{1}^{-1}$};
	\draw (3.5 + 2.5*\sep, 0)  node{$\cdots$};
	\draw (3.5 + 2.5*\sep +\a, \h) node[anchor=south]{$\cdots$};
	\draw (4 + 3*\sep, 0) -- (4 + 3*\sep +\a, \h) node[anchor=south]{$i_{\xi_2}^ {-1}$};
	
	\draw (4 + 4*\sep, 0)  node{$\cdots$};
	\draw (4 + 4*\sep +\a, \h) node[anchor=south]{$\cdots$};
	\draw (4 + 4*\sep+\a, 0)  node{$\cdots$};
	\draw (4 + 4*\sep , \h) node[anchor=south]{$\cdots$};
	
	\draw[orange, dotted] (4.8 + 3.8*\sep, 0) -- (4.8 + 3.8*\sep +\a, \h);
	
	\draw (5 + 4.5*\sep, 0) -- (5 + 4.5*\sep + \a, \h) node[anchor=south]{$i_{1}^{1-k}$};
	\draw (5.5 + 5*\sep, 0)  node{$\cdots$};
	\draw (5.5 + 5*\sep +\a, \h) node[anchor=south]{$\cdots$};
	\draw (6 + 5.5*\sep, 0) -- (6 + 5.5*\sep +\a, \h) node[anchor=south]{$i_{\xi_{k}}^{1-k}$};
	
	\draw (1 + \a, 0) -- (1, \h) node[anchor=south]{$i_1$} ;
       \draw (2 + \a + \sep, 0) -- (2+\sep, \h) node[anchor=south]{$i_{\xi_1}$} ;
	\draw (1.5 +\a+ \sep/2, 0) node {$\cdots$};
	\draw (1.5 +\a+ \sep/2, \h) node[ anchor = south] {$\cdots$};
	
	\draw (3 + 2*\sep + \a, 0) -- (3 + 2*\sep, \h) node[anchor=south]{$i_{1}^{-1}$};
	\draw (3.5 + 2.5*\sep + \a, 0)  node{$\cdots$};
	\draw (3.5 + 2.5*\sep, \h) node[anchor=south]{$\cdots$};
	\draw (4 + 3*\sep + \a, 0) -- (4 + 3*\sep, \h) node[anchor=south]{$i_{\xi_2}^{-1}$};
	
	\draw (5 + 4.5*\sep + \a, 0) -- (5 + 4.5*\sep, \h) node[anchor=south]{$i_{1}^{1-k}$};
	\draw (5.5 + 5*\sep + \a, 0)  node{$\cdots$};
	\draw (5.5 + 5*\sep, \h) node[anchor=south]{$\cdots$};
	\draw (6 + 5.5*\sep + \a, 0) -- (6 + 5.5*\sep, \h) node[anchor=south]{$i_{\xi_{k}}^{1-k}$};
	
	\draw[brown] (\a + 0.5, \h) rectangle (\a + 0.5 + 2 + \sep , \h + 1.4);
	\draw[brown] (\a + 2.2 + 2*\sep, \h) rectangle (\a + 2.5 + 2.2 + 3*\sep , \h + 1.4);
	\draw[brown] (\a + 4.7 + 4*\sep, \h) rectangle (\a + 5 + 3 + 5*\sep , \h + 1.4);
\end{braid}.$$
We argue that the last $\xi_k$ strings, with residues $i_{1}^{1-k}, \dots, i_{\xi_k}^{1-k}$, are all immobile in this part of the diagram. Indeed, these strings are all strongly stubborn, and for any $v' \in \mathcal{V}_v$ where one of these strings, $s$, has been cut, $s$ must reach a node in the first $k-1$ rows of $[\xi]$. However, the residues of the nodes at the end of each of these rows are different from the residues of the last $\xi_k$ strings, so that some $s' < s$ must reach each of these nodes in $v'$. But by standardness, the numbers must increase along each row, so we conclude that there are no $v' \in \mathcal{V}_v$ such that $s$ has been cut. Since $v$ is fully commutative, \cref{L:ImmobileIgnore} shows that we may ignore the last $\xi_k$ strings in $v$, and then we may apply the inductive hypothesis. Looking at the full diagram after this, we reach
\begin{equation} \label{E:MainProof1}
\begin{braid}
	\def\h{6};
	\def\sep{1.7}
	\def\a{15+\sep};
	\def\dashpart{0.9}
	
	\draw (1,0) -- (1, \h)node[anchor=south]{$i_{1}$};
	\draw (1.5 + \sep/2, 0) node {$\cdots$};
	\draw (1.5 + \sep/2, \h) node[ anchor = south] {$\cdots$};
	\draw (2+\sep, 0) -- (2+\sep, \h) node[anchor=south]{$i_{\xi_1}$} ;
	
	\draw (2.5 + 2*\sep, 0)  node{$\cdots$};
	\draw (2.5 + 2*\sep +\a, \h) node[anchor=south]{$\cdots$};
	\draw (2.5 + 2*\sep+\a, 0)  node{$\cdots$};
	\draw (2.5 + 2*\sep , \h) node[anchor=south]{$\cdots$};

	\draw (3 + 3*\sep, 0) -- (3 + 3*\sep, \h) node[anchor=south]{$i_{1}^{2-k}$};
	\draw (3.5 + 3.5*\sep, 0)  node{$\cdots$};
	\draw (3.5 + 3.5*\sep, \h) node[anchor=south]{$\cdots$};
	\draw (4 + 4*\sep, 0) -- (4 + 4*\sep, \h) node[anchor=south]{$i_{\xi_{k-1}}^ {2-k}$};

	\draw (5 + 4.5*\sep, 0) -- (5 + 4.5*\sep + \a, \h) node[anchor=south]{$i_{1}^{1-k}$};
	\draw (5.5 + 5*\sep, 0)  node{$\cdots$};
	\draw (5.5 + 5*\sep +\a, \h) node[anchor=south]{$\cdots$};
	\draw (6 + 5.5*\sep, 0) -- (6 + 5.5*\sep +\a, \h) node[anchor=south]{$i_{\xi_{k}}^{1-k}$};
	
	\draw (1 + \a, 0) -- (\a - 5, 0.5*\h) -- (1+\a, \h) node[anchor=south]{$i_1$};
       	\draw (2 + \a + \sep, 0) -- (\a - 4 + \sep, 0.5*\h) -- (2 + \a + \sep, \h) node[anchor=south]{$i_{\xi_1}$} ;
	\draw (1.5 +\a+ \sep/2, 0) node {$\cdots$};
	\draw (1.5 +\a+ \sep/2, \h) node[ anchor = south] {$\cdots$};
	
	\path (1 + \a + 2 + 3*\sep, 0) -- (1 + 2 + 3*\sep, \h)  coordinate[pos=0.5] (A);
	\draw (1 + \a + 2 + 3*\sep, 0) -- (A) -- (1+\a + 2 + 3*\sep, \h) node[anchor=south]{$i_1^{-1}$};
       \path (2 + \a + \sep + 2 + 3*\sep, 0) -- (2+\sep + 2 + 3*\sep, \h) coordinate[pos=0.5] (A);
       \draw (2 + \a + \sep + 2 + 3*\sep, 0) -- (A) -- (2 + \a + \sep + 2 + 3*\sep, \h) node[anchor=south]{$i_{\xi_2}^{-1}$} ;
	\draw (1.5 +\a+ \sep/2 + 2 + 3*\sep, 0) node {$\cdots$};
	\draw (1.5 +\a+ \sep/2 + 2 + 3*\sep, \h) node[ anchor = south] {$\cdots$};
	
	\draw (5 + 4.5*\sep + \a, 0) -- (5 + 4.5*\sep, \h) node[anchor=south]{$i_{1}^{1-k}$};
	\draw (5.5 + 5*\sep + \a, 0)  node{$\cdots$};
	\draw (5.5 + 5*\sep, \h) node[anchor=south]{$\cdots$};
	\draw (6 + 5.5*\sep + \a, 0) -- (6 + 5.5*\sep, \h) node[anchor=south]{$i_{\xi_{k}}^{1-k}$};
	
	\draw[brown] (\a + 0.5, \h) rectangle (\a + 0.5 + 2 + \sep , \h + 1.4);
	\draw[brown] (\a + 2.2 + 3*\sep, \h) rectangle (\a + 2.6 + 2 + 4*\sep , \h + 1.4);
	\draw[brown] (\a + 5 + 4*\sep, \h) rectangle (\a + 5 + 2.7 + 5*\sep , \h + 1.4);
\end{braid}.
\end{equation}
We will perform a further induction to show that this diagram equals the result of the lemma if $i_1$ is not a residue of the $k$-th row, and otherwise it equals
\begin{equation} \label{E:MainProof2}
 \begin{braid} 
	\def\h{6};
	\def\sep{1.7}
	\def\a{18+\sep};
	\def\dashpart{0.9}
	
	\draw (1,0) -- (1, \h)node[anchor=south]{$i_{1}$};
	\draw (1.5 + \sep/2, 0) node {$\cdots$};
	\draw (1.5 + \sep/2, \h) node[ anchor = south] {$\cdots$};
	\draw (2+\sep, 0) -- (2+\sep, \h) node[anchor=south]{$i_{\xi_1}$} ;
	
	\draw (3 + 2*\sep, 0) -- (3 + 2*\sep, \h) node[anchor=south]{$i_{1}^{-1}$};
	\draw (3.5 + 2.5*\sep, 0)  node{$\cdots$};
	\draw (3.5 + 2.5*\sep, \h) node[anchor=south]{$\cdots$};
	\draw (4 + 3*\sep, 0) -- (4 + 3*\sep, \h) node[anchor=south]{$i_{\xi_2}^ {-1}$};
	\draw (4.5 + 3.75*\sep, 0)  node{$\cdots$};
	\draw (4.5 + 3.75*\sep, \h) node[anchor=south]{$\cdots$};
	
	\draw (5 + 4.5*\sep, 0) -- (5 + 4.5*\sep, \h) node[anchor=south]{$i_{1}^{1-k}$};
	\draw (5.5 + 5*\sep, 0)  node{$\cdots$};
	\draw (5.5 + 5*\sep, \h) node[anchor=south]{$\cdots$};
	\draw (6 + 5.5*\sep, 0) -- (1+\a, \h) node[anchor=south]{$i_{1}$};
	\draw (6.5 + 6*\sep, 0)  node{$\cdots$};
	\draw (6.5 + 6*\sep, \h) node[anchor=south]{$\cdots$};
	\draw (7 + 6.5*\sep, 0) -- (2+\sep+ \a, \h) node[anchor=south]{$i_{\xi_k}^{1-k}$};
	
	\draw (1 + \a, 0) -- (6 + 5.5*\sep, \h) node[anchor=south]{$i_1$} ;
	\draw (1.5 +\a+ \sep/2, 0) node {$\cdots$};
	\draw (1.5 +\a+ \sep/2, \h) node[ anchor = south] {$\cdots$};
	\draw (2 + \a + \sep, 0) -- (7 + 6.5*\sep, \h) node[anchor=south]{$i_{\xi_k}^{1-k}$} ;
	\draw (2.5 +\a+ 1.5*\sep, 0) node {$\cdots$};
	\draw (2.5 +\a+ 1.5*\sep, \h) node[ anchor = south] {$\cdots$};
	\draw (3 + \a + 2*\sep, 0) -- (3+2*\sep+ \a, \h) node[anchor=south]{$i_{\xi_1}$} ;
	
	\draw (4 + 3*\sep + \a, 0) -- (4 + 3*\sep+ \a, \h) node[anchor=south]{$i_{1}^{-1}$};
	\draw (4.5 + 3.5*\sep + \a, 0)  node{$\cdots$};
	\draw (4.5 + 3.5*\sep + \a, \h) node[anchor=south]{$\cdots$};
	\draw (5 + 4*\sep + \a, 0) -- (5 + 4*\sep+ \a, \h) node[anchor=south]{$i_{\xi_2}^{-1}$};
	\draw (5.5 + 4.75*\sep + \a, 0)  node{$\cdots$};
	\draw (5.5 + 4.75*\sep + \a, \h) node[anchor=south]{$\cdots$};
	
	\draw (6 + 5.5*\sep + \a, 0) -- (6 + 5.5*\sep+ \a, \h) node[anchor=south]{$i_{1}^{1-k}$};
	\draw (6.5 + 6*\sep + \a, 0)  node{$\cdots$};
	\draw (6.5 + 6*\sep + \a, \h) node[anchor=south]{$\cdots$};
	\draw (7 + 6.5*\sep + \a, 0) -- (7 + 6.5*\sep+ \a, \h) node[anchor=south]{$i_{\xi_{k}}^{1-k}$};
	
	\draw[brown] (\a + 0.5, \h) rectangle (\a + 0.8 + 3 + 2*\sep , \h + 1.4);
	\draw[brown] (\a + 3.2 + 3*\sep, \h) rectangle (\a + 3.5 + 2.2 + 4*\sep , \h + 1.4);
	\draw[brown] (\a + 5.7 + 5*\sep, \h) rectangle (\a + 6 + 3 + 6*\sep , \h + 1.4);
\end{braid}.\end{equation}
For now, let us assume the claim. If $i_1$ is not a residue of the $k$-th row of $\xi$, we have proved the lemma. Otherwise, the $i_1$-string which reaches the $i_1$-node in the $k$-th row of $\xi$ in \cref{E:MainProof1} reaches the upper-leftmost $i_1$-node inside $[\xi]$ in \cref{E:MainProof2}, and applying a dot to this string yields the desired result after an application of \cref{L:MainOneRow}.

It is left to prove the claim about \cref{E:MainProof1}. We will perform an induction on the number of rows $k$. The case where $k = 1$ is clear. Going back to the diagram in \cref{E:MainProof1}, we focus our attention on the last two rows of $\xi$, where we have a subdiagram as follows.
$$\begin{braid}
    \def\h{7.8};
    \def\sep{2.21};
    \def\a{2.6+\sep};
    \def\dashpart{1.17};

    \draw (0, 0) -- (7.8 + 3.9*\sep +1.3*\a, \h) node[anchor=south]{$i_{1}^{1-k}$};
    \draw (1.69 + \sep, 0) -- (9.1 + 5.2*\sep +1.3*\a, \h) node[anchor=south]{$i_{\xi_k}^{1-k}$};
    
    \draw (1.3 + 1.3*\a, 0) -- (-2.6, \h/2) -- (1.3 + 1.3*\a, \h) node[anchor=south]{$i_{1}^{2-k}$};
    \draw (2.6 + 1.3*\a + \sep, 0) -- (-1.3 + \sep, \h/2)-- (2.6 + 1.3*\a + \sep, \h) node[anchor=south]{$i_{\xi_k}^{1-k}$};
    \draw (3.9 + 1.3*\a+\sep, 0) -- (0+\sep, \h/2) -- (3.9 + 1.3*\a+\sep, \h) node[anchor=south]{$i_{\xi_k}^{2-k}$};
    \draw (5.2 + 1.3*\a + \sep, 0) -- (1.3+\sep, \h/2) -- (5.2 + 1.3*\a + \sep, \h) node[anchor=south]{$i_{\xi_k}^{3-k}$};
    \draw (6.5 + 1.3*\a + 2*\sep, 0) -- (2.6+2*\sep, \h/2) -- (6.5 + 1.3*\a + 2*\sep, \h) node[anchor=south]{$i_{\xi_{k-1}}^{2-k}$};
    
    \draw (0.65+\sep/2, \h) node[anchor=south]{$\cdots$};
    \draw (1.95 + 1.3*\a +0.5*\sep, \h) node[anchor=south]{$\cdots$};
    \draw (5.85 + 1.3*\a +1.5*\sep, \h) node[anchor=south]{$\cdots$};
    \draw (8.45 + 1.3*\a +4.5*\sep, \h) node[anchor=south]{$\cdots$};
    
    \draw (0.65+\sep/2, 0) node{$\cdots$};
    \draw (1.95 + 1.3*\a +0.5*\sep, 0) node{$\cdots$};
    \draw (5.85 + 1.3*\a +1.5*\sep, 0) node{$\cdots$};
    \draw (8.45 + 1.3*\a +4.5*\sep, 0) node{$\cdots$};
    
    \draw (7.8 + 3.9*\sep + 1.3*\a, 0) -- (0, \h) node[anchor=south]{$i_{1}^{1-k}$};
    \draw (9.1 + 5.2*\sep + 1.3*\a, 0) -- (1.3+\sep, \h) node[anchor=south]{$i_{\xi_k}^{1-k}$};
    
    \draw[brown] (1.3*\a + 0.26, \h) rectangle (7 + 1.3*\a + 2*\sep + 0.65 , \h + 1.5);
    \draw[brown] (6.76 + 3.9*\sep + 1.3*\a, \h) rectangle (10.4 + 5.2*\sep + 1.3*\a, \h+1.5);
\end{braid}.$$

Next we perform computations on this diagram. We slide the strings which reach row $k-1$ with residues $i_{\xi_k}^{2-k}, \dots, i_{\xi_{k-1}}^{2-k}$ to the right side of the diagram. Strings to the right of the $i_{\xi_k}^{2-k}$-string slide to the right without leaving any error terms. When sliding to the right, the $i_{\xi_k}^{2-k}$-string slides past a $(i_{\xi_k}^{1-k}, i_{\xi_k}^{1-k})$-crossing to its right, leaving two terms. First we draw the diagram after the $i_{\xi_k}^{2-k}$-string slides past this crossing.
$$\begin{braid}
    \def\h{7.8};
    \def\sep{2.21};
    \def\a{2.6+\sep};
    \def\dashpart{1.17};

    \draw (0, 0) -- (7.8 + 3.9*\sep +1.3*\a, \h) node[anchor=south]{$i_{1}^{1-k}$};
    \draw (1.69 + \sep, 0) -- (9.1 + 5.2*\sep +1.3*\a, \h) node[anchor=south]{$i_{\xi_k}^{1-k}$};
    
    \draw (1.3 + 1.3*\a, 0) -- (-2.6, \h/2) -- (1.3 + 1.3*\a, \h) node[anchor=south]{$i_{1}^{2-k}$};
    \draw (2.6 + 1.3*\a + \sep, 0) -- (-1.3 + \sep, \h/2)-- (2.6 + 1.3*\a + \sep, \h) node[anchor=south]{$i_{\xi_k}^{1-k}$};

    \draw (3.9 + 1.3*\a+\sep, 0) -- (10+1.3*\a+\sep, \h/2) -- (3.9 + 1.3*\a+\sep, \h) node[anchor=south]{$i_{\xi_k}^{2-k}$};
    \draw (5.2 + 1.3*\a + \sep, 0) -- (10+1.3*\a +1.3+\sep, \h/2) -- (5.2 + 1.3*\a + \sep, \h) node[anchor=south]{$i_{\xi_k}^{3-k}$};
    \draw (6.5 + 1.3*\a + 2*\sep, 0) -- (10+1.3*\a + 2.6+2*\sep, \h/2) -- (6.5 + 1.3*\a + 2*\sep, \h) node[anchor=south]{$i_{\xi_{k-1}}^{2-k}$};
    
    \draw (0.65+\sep/2, \h) node[anchor=south]{$\cdots$};
    \draw (1.95 + 1.3*\a +0.5*\sep, \h) node[anchor=south]{$\cdots$};
    \draw (5.85 + 1.3*\a +1.5*\sep, \h) node[anchor=south]{$\cdots$};
    \draw (8.45 + 1.3*\a +4.5*\sep, \h) node[anchor=south]{$\cdots$};
    
    \draw (0.65+\sep/2, 0) node{$\cdots$};
    \draw (1.95 + 1.3*\a +0.5*\sep, 0) node{$\cdots$};
    \draw (5.85 + 1.3*\a +1.5*\sep, 0) node{$\cdots$};
    \draw (8.45 + 1.3*\a +4.5*\sep, 0) node{$\cdots$};
    
    \draw (7.8 + 3.9*\sep + 1.3*\a, 0) -- (0, \h) node[anchor=south]{$i_{1}^{1-k}$};
    \draw (9.1 + 5.2*\sep + 1.3*\a, 0) -- (1.3+\sep, \h) node[anchor=south]{$i_{\xi_k}^{1-k}$};
    
    \draw[red, sharp corners] (15, 4) -- (21,4) -- (21,7) -- (15, 7) -- cycle;
    \draw[brown] (1.3*\a + 0.26, \h) rectangle (7 + 1.3*\a + 2*\sep + 0.65 , \h + 1.5);
    \draw[brown] (6.76 + 3.9*\sep + 1.3*\a, \h) rectangle (10.4 + 5.2*\sep + 1.3*\a, \h+1.5);
\end{braid}.$$
The $\xi_{k-1} + 1$ strings inside the highlighted red rectangle are in the same disposition as \cref{E:Garnir}, so that this diagram equals zero. We are left only with the diagram after the $i_{\xi_k}^{2-k}$-string cuts the $(i_{\xi_k}^{1-k}, i_{\xi_k}^{1-k})$-crossing, which we draw next. 
$$\begin{braid}
    \def\h{7.8};
    \def\sep{2.21};
    \def\a{3.9+\sep};
    \def\dashpart{1.17};

    \draw (0, 0) -- (6.5 + 3.9*\sep +1.3*\a, \h) node[anchor=south]{$i_{1}^{1-k}$};
    \draw (1.3 + \sep, 0) -- (7.8 + 5.2*\sep +1.3*\a, \h) node[anchor=south]{$i_{\xi_k}^{-k}$};
    \draw[orange] (2.6 + \sep, 0) -- (10.3+\a, \h/2) -- (2.6  +\sep, \h) node[anchor=south]{$i_{\xi_k}^{1-k}$};
    
    \draw (1.3 + 1.3*\a, 0) -- (0, \h/2) -- (1.3 + 1.3*\a, \h) node[anchor=south]{$i_{1}^{2-k}$};
    \draw (2.6 + 1.3*\a + \sep, 0) -- (1.3 + \sep, \h/2) -- (2.6 + 1.3*\a + \sep, \h) node[anchor=south]{$i_{\xi_k}^{1-k}$};
    \draw (3.9 + 1.3*\a+\sep, 0) -- (16.9 + 1*\sep, \h/2) -- (3.9 + 1.3*\a+\sep, \h) node[anchor=south]{$i_{\xi_k}^{2-k}$};
    \draw (5.2 + 1.3*\a+2*\sep, 0) -- (18.2 + 2*\sep, \h/2) -- (5.2 + 1.3*\a+2*\sep, \h) node[anchor=south]{$i_{\xi_{k-1}}^{2-k}$};
    
    \draw (.65+.65*\sep, \h) node[anchor=south]{$\cdots$};
    \draw (1.95 + 1.3*\a +0.5*\sep, \h) node[anchor=south]{$\cdots$};
    \draw (3.9+1.3*\a+1.8*\sep, \h) node[anchor=south]{$\cdots$};
    \draw (7.2 + 4.5*\sep + 1.3*\a, \h) node[anchor=south]{$\cdots$};
    
    \draw (.65 + \sep/2, 0) node{$\cdots$};
    \draw (1.95 + 1.3*\a +0.5*\sep, 0) node{$\cdots$};
    \draw (3.9+1.3*\a+1.8*\sep, 0) node{$\cdots$};
    \draw (7.2 + 4.5*\sep + 1.3*\a, 0) node{$\cdots$};
    
    \draw (6.5 + 3.9*\sep + 1.3*\a, 0) -- (0, \h) node[anchor=south]{$i_{1}^{1-k}$};
    \draw (7.8 + 5.2*\sep + 1.3*\a, 0) --  (1.3 + \sep, \h) node[anchor=south]{$i_{\xi_k}^{-k}$};
    \draw (9.1 + 5.2*\sep + 1.3*\a, 0) -- (9.1 + 5.2*\sep + 1.3*\a, \h) node[anchor=south]{$i_{\xi_k}^{1-k}$};
    
    \draw[brown] (1.3*\a + 0.26, \h) rectangle (5.7 + 1.3*\a + 2*\sep + 0.65 , \h + 1.5);
    \draw[brown] (5.46 + 3.9*\sep + 1.3*\a, \h) rectangle (9.9 + 5.2*\sep + 1.3*\a, \h+1.5);
\end{braid}.$$
Next we slide the orange $i_{\xi_k}^{1-k}$-string reaching the left side of the diagram to the left. The summand where it slides past the $(i_{\xi_k}^{-k}, i_{\xi_k}^{-k})$-crossing is zero, as the $i_{\xi_k}^{1-k}$-string slides to the left past other crossings and reaches another $i_{\xi_k}^{1-k}$-string where we may apply \cref{E:PsiSquared}. Therefore, we consider only the summand where the $i_{\xi_k}^{1-k}$-string cuts the $(i_{\xi_k}^{-k}, i_{\xi_2}^{-k})$-crossing to its left. Similarly, the $(i_{\xi_k}^{-k})$-string must cut the $(i_{\xi_k}^{-k-1}, i_{\xi_k}^{-k-1})$-crossing to its left, and so on. After rearranging the strings, we reach the diagram

$$ \begin{braid}
    \def\h{7.2};
    \def\sep{1.7};
    \def\a{3.6+\sep};
    \def\dashpart{0.4};
    
    \draw (\dashpart + 6 + 3*\sep +\a, 0) -- (\dashpart + 6 + 3*\sep +\a, \h) node[anchor=south]{$i_{1}^{1-k}$};
    \draw (\dashpart + 7.2 + 4*\sep+\a, 0) -- (\dashpart + 7.2 + 4*\sep +\a, \h) node[anchor=south]{$i_{\xi_k}^{-k}$};
    \draw (\dashpart + 8.4 + 4*\sep + \a, 0) -- (\dashpart + 8.4 + 4*\sep + \a, \h) node[anchor=south]{$i_{\xi_k}^{1-k}$};
    
    \draw (1.2 + \a, 0) -- (0, \h/2) -- (1.2 + \a, \h) node[anchor=south]{$i_{1}^{2-k}$};
    \draw (2.4 + \a + \sep, 0) -- (1.2 + \sep, \h/2) -- (2.4 + \a + \sep, \h) node[anchor=south]{$i_{\xi_k}^{1-k}$};
    \draw (\dashpart +3.6 + \a+\sep, 0)  -- (\dashpart +3.6 + \a+\sep, \h) node[anchor=south]{$i_{\xi_k}^{2-k}$};
    \draw (\dashpart + 4.8 + \a+2*\sep, 0)  -- (\dashpart + 4.8 + \a+2*\sep, \h) node[anchor=south]{$i_{\xi_{k-1}}^{2-k}$};
    
    \draw (.6+.5*\sep, \h) node[anchor=south]{$\cdots$};
    \draw (1.8 + \a +0.5*\sep, \h) node[anchor=south]{$\cdots$};
    \draw (\dashpart + 4.2 + \a +1.5*\sep, \h) node[anchor=south]{$\cdots$};
    \draw (\dashpart + 6.6 + \a +3.5*\sep, \h) node[anchor=south]{$\cdots$};
    
    \draw (.6 + \sep/2, 0) node{$\cdots$};
    \draw (1.8 + \a +0.5*\sep, 0) node{$\cdots$};
    \draw (\dashpart + 4.2 + \a +1.5*\sep, 0) node{$\cdots$};
    \draw (\dashpart + 6.6 + \a +3.5*\sep, 0) node{$\cdots$};
    
    \draw (0, 0) -- (2.4 + \a- \sep, \h/2) -- (0, \h) node[anchor=south]{$i_{1}^{1-k}$};
    \draw (1.2 + \sep, 0) --  (3.6 + \a , \h/2) -- (1.2 + \sep, \h) node[anchor=south]{$i_{\xi_{k}}^{-k}$};
    \draw (2.4 + \sep, 0) -- (4.8+\a, \h/2) -- (2.4  +\sep, \h) node[anchor=south]{$i_{\xi_k}^{1-k}$};
    
    \draw[brown] (\a + 0.24, \h) rectangle (\dashpart + 5.4+ \a + 2*\sep + 0.6 , \h + 1.58);
    \draw[brown] (\dashpart + 5.04 + 3*\sep + \a, \h) rectangle (\dashpart + 9.6 + 4*\sep + \a, \h+1.58);
\end{braid}.$$
An easy induction shows that the above equals (up to sign)
\begin{equation} \label{E:MainProof3}
\begin{braid}
	\def\h{6};
	\def\sep{1.7}
	\def\a{10+\sep};
	\def\dashpart{0.9}
	
	\draw (2.5 + 2*\sep, 0) -- (2.5 + 2*\sep, \h) node[anchor=south]{$i_{1}^{1-k}$};
	\draw (4 + 2*\sep, 0) -- (1+\a, \h) node[anchor=south]{$i_{1}^{2-k}$};
	
	\draw (5 + 3*\sep, 0) -- (2 + \a + \sep, \h) node[anchor=south]{$i_{\xi_k}^{1-k}$};
	
	\draw (1 + \a, 0) -- (4 + 2*\sep, \h)  node[anchor=south]{$i_{2}^{1-k}$};
       \draw (2 + \a + \sep, 0) -- (5 + 3*\sep, \h) node[anchor=south]{$i_{\xi_k}^{1-k}$};
	\draw (3.5 + \a+\sep, 0) --(3.5 + \a+\sep, \h) node[anchor=south]{$i_{\xi_k}^{2-k}$};
       \draw (4.5 + \a + 2*\sep, 0) -- (4.5 + \a + 2*\sep, \h) node[anchor=south]{$i_{\xi_{k-1}}^{2-k}$};
       
       \draw (4.5+2.5*\sep, \h) node[anchor=south]{$\cdots$};
       \draw (1.5 + \a +0.5*\sep, \h) node[anchor=south]{$\cdots$};
       \draw (4 + \a +1.5*\sep, \h) node[anchor=south]{$\cdots$};
       \draw (6.3 + \a +3.5*\sep, \h) node[anchor=south]{$\cdots$};
       
       \draw (4.5+2.5*\sep, 0) node{$\cdots$};
       \draw (1.5 + \a +0.5*\sep, 0) node{$\cdots$};
       \draw (4 + \a +1.5*\sep, 0) node{$\cdots$};
       \draw (6.3 + \a +3.5*\sep, 0) node{$\cdots$};
	
	\draw (6 + 3*\sep + \a, 0) -- (6 + 3*\sep + \a, \h) node[anchor=south]{$i_{1}^{1-k}$};
	\draw (7 + 4*\sep + \a, 0) -- (7 + 4*\sep + \a, \h) node[anchor=south]{$i_{\xi_k}^{1-k}$};
	
	\draw[brown] (\a + 0.1, \h) rectangle (4.9 + \a + 2*\sep + 0.5 , \h + 1.58);
	\draw[brown] (4.9 + 3*\sep + \a, \h) rectangle (7.8 + 4*\sep + \a, \h+1.58);
	
\end{braid}.
\end{equation}
Going back to the diagram in \cref{E:MainProof1}, we may substitute our result back in. Then, the leftmost $i_{1}^{1-k}$-string in the diagram in \cref{E:MainProof3} can be pulled to the left through all the strings of different residues without leaving error terms. This string, together with the last $\xi_k$ strings, is now immobile and may be ignored. When we remove these strings from the diagram, we reach a new diagram which is identical to \cref{E:MainProof1} but has one less row. Here we invoke the induction hypothesis to prove the claim.
\end{proof}

Label the basic strings which have residue $i_1$ as $s_1, s_2, \dots, s_m$. Then $k[x_1, x_2, \dots, x_m]$ acts on $R_{n+\gamma}^\Lambda$, where $x_i \mapsto y_{s_i}$, and so it also acts on the Specht modules. In particular, symmetric polynomials on $x_1, \dots, x_m$ are in the center of $R_m$, in the same way that symmetric polynomials are in the center of nil-Hecke algebras. We define $\varepsilon_m^u \in k[x_1, x_2, \dots, x_m]$ to be the elementary symmetric polynomial of degree $u$. Recall that the ring $R_\gamma$ is naturally embedded in $R_{n+\gamma}$, with $R_\gamma$ acting on the last $\gamma$ strings via $R_{n} \otimes R_\gamma \hookrightarrow R_{n+\gamma}$. Since $[\lambda^*]$ is a straight shape, it makes sense to consider the partition $\lambda^* \vdash \gamma$, and recall that by convention, the nodes in the main diagonal of $[\lambda^*]$ have residue $i_1$. 

\begin{lem} \label{L:DotsHom}
Let $v \in \Std_\blam^e(\bnu)$ be a monomial with at most $u$ non-immobile basic strings of residue $i_1$. Further, let $S^{\bnu}_\gamma$ be the $R_\gamma$-module obtained by restricting the action of $R_{n+\gamma}$ on $S^{\bnu}$ to $R_\gamma$. Then, there is a homomorphism of $R_\gamma$-modules  from $S^{\lambda^*}$ to $S^{\bnu}_\gamma$. This homomorphism sends the Specht module generator of $S^{\lambda^*}$ (that is, $v_{\t^{\lambda^*}}$) to $v\varepsilon_m^u$.
\end{lem}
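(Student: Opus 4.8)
The plan is to build the homomorphism from $S^{\lambda^*}$ to $S^{\bnu}_\gamma$ using the description of Specht modules by generators and relations from~\cite{kmr}. Since $S^{\lambda^*}$ is cyclic, generated by $v_{\t^{\lambda^*}}$ subject to the relations in \cref{E:Specht}, \cref{E:ResidueRelation} and the Garnir relations, it suffices to show that the assignment $v_{\t^{\lambda^*}} \mapsto v\varepsilon_m^u$ respects all of these relations; then there is a well-defined $R_\gamma$-module map. First I would check the residue relation \cref{E:ResidueRelation}: the last $\gamma$ strings of $v$ carry the residue sequence of $\t_\blam^\bnu$ restricted to $\{n+1,\dots,n+\gamma\}$, which by the congruence hypothesis on $[\lambda^*]$ and $[\mu^*]$ agrees with the residue sequence $\resi^{\lambda^*}$ of $[\lambda^*]$ (reading the extra strings as a copy of $S^{\lambda^*}$); and since $\varepsilon_m^u$ only involves dots on basic strings, acting by $e(\resi)$ on the right picks out exactly the summand with residues $\resi^{\lambda^*}$ on the extra strings, as needed. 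The degree-preservation claim for the grading of the map follows from $\deg(v\varepsilon_m^u) = \deg v + 2u$, but this is not asserted here so I would only remark on it.

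The key step is verifying the Specht and Garnir relations of $S^{\lambda^*}$. Consider first the easy Specht relation: applying $y_{n+g}$ to $v\varepsilon_m^u$ when $n+g$ sits in the first row of $[\lambda^*]$, or applying $\psi_{n+g}$ when two consecutive extra strings lie in the same row of $[\lambda^*]$. Here is where the hypothesis on non-immobile strings enters. I would argue as follows: when we act by such a $y_{n+g}$ or $\psi_{n+g}$ on $v\varepsilon_m^u$ and decompose into the standard basis, using the direction-of-relations convention from \cref{S:Stubborn} (no new crossings created), every resulting monomial is obtained from $v\varepsilon_m^u$ by cutting crossings and sliding dots. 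The symmetric polynomial $\varepsilon_m^u$ places $u$ dots distributed among the $m$ basic $i_1$-strings; since at most $u$ of these strings are non-immobile, in each monomial of the expansion of $\varepsilon_m^u$ at least one dot lands on an immobile basic string — and a dot on an immobile string, once pushed to the top, kills the diagram by \cref{E:Specht} after cutting the relevant $(i_1,i_1)$-crossing, or more precisely forces the diagram into the span of terms that vanish. The cleanest way to phrase this: $v\varepsilon_m^u = 0$ whenever $v$ has at most $u$ non-immobile $i_1$-strings? No — rather, the action of the extra generators $y_{n+g}, \psi_{n+g}$ (for $g$ in the relevant rows) combined with the dots of $\varepsilon_m^u$ produces the zero element, because the extra-string manipulation forces an interaction that cuts an $i_1$-crossing and exposes an immobile dotted string. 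I expect this pigeonhole-plus-immobility argument to be the main obstacle and the place requiring the most care; one must track which basic $i_1$-strings a given extra string can reach and confirm that the relevant Specht/Garnir relation on the extra strings necessarily mobilizes more than $u$ of the basic $i_1$-strings, contradiction, hence the image is zero.

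For the homogeneous Garnir relations of $S^{\lambda^*}$ I would use the same mechanism: a Garnir element acting on the extra strings rearranges them within $[\lambda^*]$, and by \cref{L:MainStraight} together with the immobility bookkeeping from \cref{L:OneRowImmobile}-style arguments, any such rearrangement that would violate standardness forces a dot onto an immobile basic $i_1$-string (there being at most $u$ mobile ones but the rearrangement requiring the mobility of more), so the relation is sent to $0$. Having checked that all defining relations of $S^{\lambda^*}$ are respected, the universal property of the presentation gives the desired $R_\gamma$-module homomorphism $S^{\lambda^*} \to S^{\bnu}_\gamma$ with $v_{\t^{\lambda^*}} \mapsto v\varepsilon_m^u$, completing the proof. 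Throughout, I would lean on \cref{L:ImmobileInactive} and \cref{L:ImmobileTrick} to legitimately ignore immobile strings while computing, and on \cref{L:ImmobileIgnore} in the fully commutative situations that arise, so that the only real content is the counting argument bounding mobile strings by $u$.
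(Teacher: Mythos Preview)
Your overall strategy is correct and matches the paper's: use the presentation of $S^{\lambda^*}$ and verify that $v\varepsilon_m^u$ satisfies the defining relations. The residue relation is handled as you say. However, your argument for the Specht and Garnir relations has a genuine gap.

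You write that ``in each monomial of the expansion of $\varepsilon_m^u$ at least one dot lands on an immobile basic string'', but this is false when there are \emph{exactly} $u$ non-immobile basic $i_1$-strings: one summand of $\varepsilon_m^u$ places all $u$ dots on precisely those strings. You notice this yourself and retreat to a vague claim that the extra-string relation ``mobilizes more than $u$ of the basic $i_1$-strings'', but you never justify that count. The paper closes this gap by reversing the order of operations. Since $\varepsilon_m^u$ acts on basic strings and the Specht/Garnir element $\Psi$ acts on extra strings, they commute, so it suffices to show $(v\Psi)\varepsilon_m^u = 0$. One slides $\Psi$ upward through $v$; because $[\lambda^*]$ is $e$-small, $\Psi$ itself contains no $(i,i)$-crossings, so the only error terms arise from cutting basic--extra $(i,i)$-crossings. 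The main term, where $\Psi$ reaches the top, vanishes by the Specht relations of $S^\bnu$. In each error term, \cref{L:Stubborn3Sets} forces the top-left node of $[\lambda^*]$ to be reached by some \emph{basic} $i_1$-string $s$, and this $s$ (necessarily non-immobile in $v$) now crosses no extra strings. Hence every error term has at most $u-1$ non-immobile basic $i_1$-strings crossing the extras. Now the pigeonhole works: each monomial of $\varepsilon_m^u$ places $u$ dots on distinct basic $i_1$-strings, so at least one dot lands on a string that is immobile or crosses no extras; since $\varepsilon_m^u$ is symmetric it slides through all basic--basic crossings, and that dot reaches the top and kills the diagram.

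The missing idea is this reduction from $u$ to $u-1$ via applying $\Psi$ first and analysing its error terms; without it your pigeonhole argument does not close.
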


\begin{proof}
Notice first that $\varepsilon^{u}_m$ commutes with $R_n \otimes R_\gamma \subset R_{n+\gamma}$. It commutes with $R_n \subset R_{n+\gamma}$ because it is a symmetric polynomial, and it commutes with $R_\gamma \subset R_{n+\gamma}$ because it acts on the basic strings.

Next we see that \cref{E:ResidueRelation} trivially holds. We check the other relations together. Let $\Psi \in R_\gamma$ be such that $v_{\t^{\lambda^*}}\Psi = 0$ is a relation given by \cref{E:Specht} or \cref{E:Garnir} of $S^{\lambda^*}$. Note that $\Psi$ does not feature any $(i, i)$-crossings, as $\lambda^*$ is $e$-small. Let $\Psi' \in R_{n+\gamma}$ be the image of $\Psi$ under the embedding $R_\gamma \subset R_{n+\gamma}$. 

Next, we can slide $\Psi'$ past some $(i, i)$-crossings in $v\Psi'$. This gives error terms where different $(i, i)$-crossings (for some $i \in I$) between basic and extra strings are cut. After $\Psi'$ slides past every crossing, we may apply \cref{E:Specht} to obtain zero. Each of the error terms can be further decomposed into standard monomials. For any monomial $v'$ in this decomposition, the string $s$ reaching the top leftmost node $N \in [\lambda^*]$ in $v'$ must be basic. This is because, if an extra string reaches $N$, then standardness requires that every other node in $[\mu^*]$ be reached by an extra string. However, after cutting a crossing of a basic string with an extra string, the set of nodes $[\mu^*]$ is no longer accessible to the set of extra strings, according to \cref{L:Stubborn3Sets}.

Note that string $s$ is non-immobile in $v$, since at least one of its $(i, i)$-crossings must have been cut in order to change the node it reaches. Moreover, $s$ cannot intersect any of the extra strings in $v'$, as those strings are stubborn and they originally reached nodes equal to or to the right of $N$. 

Clearly $\varepsilon_m^u$ and $\Psi'$ commute, as they act on entirely different sets of strings. But $v\Psi'$ has at most $u-1$ non-immobile $i_1$-strings which cross the extra strings at least once, because $v$ has at most $u$ of these strings, including $s$, while string $s$ does not cross extra strings in any summand of $v\Psi'$. Finally, note that $\varepsilon^{u}_m$ can only cut crossings between basic and extra strings, as the symmetric polynomials slide past basic-basic crossings without leaving error terms. We deduce that $(v\Psi')\varepsilon^{u}_m = 0$, which concludes the proof.
\end{proof}

The homomorphism given above may be the zero homomorphism. Recall our definition of $\sigma_{\xi, \mu^*}$ after the statement of \cref{T:MainStraight}. Let $\sigma_{\xi, \rho}$ be a permutation defined analogously for general straight shapes $[\rho], [\xi]$ inside a multipartition.

\begin{lem} \label{L:MainSubshape}
Let $[\rho]$ and $[\xi]$ be $e$-small straight shapes such that $[\xi]$ is a subshape of $[\rho]$. Suppose $[\rho]$ has $\gamma$ nodes and $[\xi]$ has $\gamma' \leq \gamma$ nodes. Place the two Young diagrams in two components to form a bipartition $(\rho, \xi)$. Let $\t_{\xi, \rho}$ be the unique maximal $(\rho, \xi)$-tableau in dominance order such that $(\t_{\xi, \rho})_{\downarrow{\gamma'}} = (0, \t^\xi)$. Then, applying one dot to each $i_1$-string inside the first $\gamma'$ strings of $v_{\t_{\xi, \rho}}$ gives (up to sign) $v_{\t_{\xi, \rho} \sigma_{\xi,\rho}}$.
\end{lem}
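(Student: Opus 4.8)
The plan is to reduce the statement to \cref{L:MainStraight} by first stripping away the strings that play no essential role, using the stubborn-string machinery of \cref{S:Stubborn}, and then to clean up the remaining error terms exactly as in the proof of \cref{L:MainStraight}.

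First I would record the structure of $v_{\t_{\xi,\rho}}$. Its underlying permutation carries the block $\{1,\dots,\gamma'\}$, which fills the second component $[\xi]$, past the block $\{\gamma'+1,\dots,\gamma'+\gamma\}$, which fills the first component $[\rho]$, and both blocks are moved in parallel; since the one-line notation is a concatenation of two increasing runs it is $321$-avoiding, so $v_{\t_{\xi,\rho}}$ is a fully commutative standard monomial. I would then split its strings into three families: the \emph{$\xi$-strings} $1,\dots,\gamma'$, reaching $[\xi]$; the \emph{$\xi_\rho$-strings}, reaching the top-left copy $[\xi_\rho]$ of $\xi$ inside $[\rho]$; and the remaining \emph{$\rho'$-strings}, reaching $[\rho]\setminus[\xi_\rho]$. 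By \cref{L:Stubborn0} every first-component string is stubborn and every $\xi$-string is co-stubborn, and by \cref{L:StronglyStubbornSet} the first-component strings of any fixed residue form a strongly stubborn set. Two further observations are used: since $[\xi]$ (hence $[\rho]$) is $e$-small, the $i_1$-strings among $1,\dots,\gamma'$ are exactly the $\Rank[\xi]$ many $\xi$-strings reaching the main diagonal of $[\xi]$; and since $[\xi]$ is a subshape of $[\rho]$, the $j$-th $\xi$-string and the $j$-th $\xi_\rho$-string carry the same residue for each $j$.

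Next I would carry out the reduction. Immobilize all $\rho'$-strings (legitimate by \cref{L:ImmobileTrick}, as they are stubborn) and, since $v_{\t_{\xi,\rho}}$ is fully commutative, drop them from the picture by \cref{L:ImmobileIgnore}. What remains is precisely the doubled straight-shape diagram of \cref{L:MainStraight}, with $[\xi]$ as the boxed shape, the $[\xi_\rho]$-positions as the mirror positions, and dots on exactly the strings reaching the main diagonal of $[\xi]$. Applying \cref{L:MainStraight} untangles this diagram into the parallel one in which the $\xi$- and $\xi_\rho$-strings have exchanged targets; re-inserting the unchanged $\rho'$-strings gives a fully commutative reduced diagram which, checking against the definition of $\sigma_{\xi,\rho}$, is the standard monomial $v_{\t_{\xi,\rho}\sigma_{\xi,\rho}}$. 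The degrees match because $\deg\t_{\xi,\rho}+2\Rank[\xi]=\deg(\t_{\xi,\rho}\sigma_{\xi,\rho})$, which one verifies directly from \cref{E:Degree} and which is the local shadow of the contribution $\Rank[\xi^j]$ to $d$ in \cref{T:MainStraight}.

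The final and hardest step is to close the gap left by immobilization: \cref{L:ImmobileTrick} only guarantees that the computation above recovers the basis element $\pm v_{\t_{\xi,\rho}\sigma_{\xi,\rho}}$ with the correct coefficient, so one must still rule out the extra summands of the genuine computation that arise whenever a $(i,i)$-crossing of a $\rho'$-string with a same-residue $\xi$-string is cut. I expect to dispatch these by an induction on the rows of $\xi$, mirroring the proof of \cref{L:MainStraight}: once such a crossing is cut, \cref{L:Stubborn3Sets} shows that the $i$-nodes of $[\rho]$ are no longer jointly accessible to the first-component $i$-strings, so in any surviving standard term some $i$-node of $[\rho]$ must be reached by an $i$-string issuing from $[\xi]$ (which, for $i=i_1$, still carries a dot); tracking where the displaced strings are forced to go and invoking the Specht and Garnir relations \cref{E:Specht,E:Garnir} then makes the term vanish. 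This interaction between the $\rho'$-strings and the untangling process is the main technical obstacle; everything else is formal once the dictionary with \cref{L:MainStraight} is in place (and when all $\rho'$-strings have residues distinct from those of $[\xi]$ the obstacle disappears, recovering the remark after \cref{L:MainOneRow}).
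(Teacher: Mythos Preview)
Your reduction strategy---strip away the strings reaching $[\rho]\setminus[\xi_\rho]$ and apply \cref{L:MainStraight} to what remains---is exactly the paper's. The difference is that you only establish that the $\rho'$-strings are \emph{stubborn}, then invoke \cref{L:ImmobileTrick}, which (as you correctly recognise) leaves you with a residual ``hardest step'' of killing all the error terms coming from cuts of $\rho'$-string crossings. The paper avoids this step entirely by proving the stronger statement that every $\rho'$-string is genuinely \emph{immobile}: any monomial in which one of its $(i,i)$-crossings has been cut is already zero. Once that is in hand, \cref{L:ImmobileIgnore} applies with no error terms at all, and the reduction to \cref{L:MainStraight} is complete.

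The immobility argument is short and worth learning. One inducts on the rows of $[\rho]\setminus[\xi_\rho]$, starting from the lower-leftmost node $N$. If $N$ has no left neighbour in $[\rho]$ then its residue does not occur in $[\xi]$ at all, so the string $s$ reaching $N$ meets no other string of its residue and is trivially immobile. If $N$ does have a left neighbour $N'$, then $N'\in[\xi_\rho]$ and, because $[\xi]$ is a subshape of $[\rho]$, the residue of $N'$ does not recur in $[\rho]\setminus[\xi_\rho]$; hence the string $s'$ reaching $N'$ is the largest string of its residue, and $s$ is the unique string of its residue with $s>s'$, which forces $s$ to be immobile (if its sole $(i,i)$-crossing were cut, standardness at $N'$ and $N$ would fail). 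Standardness then propagates immobility rightward along the row, and induction handles the remaining rows. This replaces your entire third paragraph.
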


\begin{proof}
We label by $[\xi_\rho]$ the unique set of nodes in $[\rho]$ which is congruent to $[\xi]$ and can be obtained from $[\rho]$ by successively removing removable nodes. Consider the diagram for $v_{\t_{\xi, \rho}}$, and let $s$ be a string of residue $i$ reaching a node $N$ which is in $[\rho] \setminus [\xi_\rho]$. We claim that $s$ is immobile. We argue by induction on the number of rows of $[\rho] \setminus [\xi_\rho]$. Suppose that $N$ is the lower leftmost node of $[\rho] \setminus [\xi_\rho]$. If $N$ does not have a node in $[\rho]$ to its left then the residue of $N$ does not appear in $[\xi]$, and therefore $s$ is immobile. On the other hand, if $N$ does have a node $N'$ to its left in $[\rho]$, then $N' \in [\xi_\rho]$, and there are no nodes with the same residue as $N'$ in $[\rho] \setminus [\xi_\rho]$ due to the choice of $N$. Let $s'$ be the string which reaches $N'$ in $v_{\t_{\xi, \rho}}$. Then $s'$ is the largest string of its residue in the diagram, and $s$ is the unique string of its residue such that $s > s'$. This shows that $s$ is immobile. By standardness, each node to the right of $N$ must be higher than $s$. But there is only a single string larger than $s$ of the corresponding residue, so that all strings reaching these nodes are immobile. Next, consider the shape $[\rho']$, obtained from $[\rho]$ by removing the row-strip of nodes that starts with $N$. By induction on the number of rows of $[\rho] \setminus [\xi_\rho]$, all strings reaching nodes in $[\rho'] \setminus [\xi_\rho]$ must be immobile.

We conclude that all strings reaching nodes inside $[\rho] \setminus [\xi_\rho]$ in the first component are immobile. These immobile strings can be ignored in this fully commutative diagram, according to \cref{L:ImmobileIgnore}, so the statement reduces to \cref{L:MainStraight}
\end{proof} 

The next definition will be relevant in the next few lemmas.
\begin{defn}
Let $v \in S^\bnu$ be a standard monomial, and consider a subset $\mathfrak{A} = \{s_1, \dots, s_k\}$ of the strings in the diagram. We say that the strings $\mathfrak{A}$ \emph{can be arranged} in a straight shape $[\xi]$ if the numbers $s_1, \dots, s_k$ can be arranged to form a standard $\xi$-tableau $\t_{[\xi]}^\mathfrak{A}$ so that the residue of the node filled by $s_j$ matches the residue of $s_j$ in $v$.
\end{defn}
Recall that $\bnu \vdash n+\gamma$, and by convention we refer to the first $n$ strings as basic strings, while the last $\gamma$ strings are said to be extra strings.
Notice that, if an extra string $s$ reaches some node $N$ in a diagram, then each node south-east of $N$ has to be reached by an extra string by standardness. In particular, there need to be enough extra strings of the appropriate residues to fill the removable shape south-west of $N$. We can use this fact to easily deduce the following. 
\begin{lem} \label{L:SubshapeCondition}
Let $v \in S^\bnu$ and suppose that the extra strings can be arranged in an $e$-small straight shape $[\xi]$ with residue $i_1$ on the main diagonal. Then, in any $v' \in \mathcal{V}_v$, the extra $i_1$-strings may only reach $i_1$-nodes of $\bnu$ which are inside a subshape of $[\xi]$.
\end{lem}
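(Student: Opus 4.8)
The plan is to pin down the region available to the extra strings by intersecting two constraints: standardness (which confines the node an extra string can occupy) and residue bookkeeping (whose supply is capped by $[\xi]$). First a residue observation: since $[\xi]$ is $e$-small with residue $i_1$ on its main diagonal, the residues occurring in $[\xi]$ form a block of fewer than $e$ consecutive classes in $\mathbb{Z}/e\mathbb{Z}$, so distinct diagonals of $[\xi]$ carry distinct residues; in particular the $i_1$-nodes of $[\xi]$ are exactly its $\Rank[\xi]$ main-diagonal nodes. Hence the extra strings comprise exactly $\Rank[\xi]$ strings of residue $i_1$ — write $\mathfrak{A}$ for this set — and the multiset of residues carried by the extra strings equals the residue multiset of $[\xi]$.

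Next, fix a standard monomial $v'\in\mathcal{W}_v$ and an extra string $s$ of residue $i_1$ reaching a node $N=(r_0,c_0,m_0)$; I claim $N$ lies inside a subshape of $[\xi]$. Let $Q_N=\{(a,b,m_0)\in[\bnu]\mid a\ge r_0,\ b\ge c_0\}$. Because $n{+}1,\dots,n{+}\gamma$ are the $\gamma$ largest entries and a standard tableau increases weakly along rows and down columns, every node of $Q_N$ is reached by an extra string of $v'$; and, translated so that $N$ sits at the origin, $Q_N$ is itself a straight shape whose top-left node has residue $i_1$ (its rows have lengths $\lambda^{(m_0)}_{r_0}-c_0+1\ge\lambda^{(m_0)}_{r_0+1}-c_0+1\ge\cdots$). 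Its residue multiset is a sub-multiset of that carried by the extra strings, hence of that of $[\xi]$; in particular $Q_N$ is $e$-small. So $Q_N$ and $[\xi]$ are $e$-small straight shapes with the same corner residue, and the residue multiset of $Q_N$ is dominated by that of $[\xi]$.

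It then remains to upgrade this domination to ``$Q_N$ is a subshape of $[\xi]$''. Since both shapes are $e$-small with corner residue $i_1$, residue $i_1+d$ occurs in either shape only on the single diagonal $b-a=d$, so the domination says $m_d(Q_N)\le m_d(\xi)$ for all $d\in\mathbb{Z}$, where $m_d(-)$ counts the nodes on the $d$-th diagonal. Encoding a partition $\rho$ by its Maya diagram $\{\rho_i-i\mid i\ge 1\}$, the number $m_d(\rho)$ is, up to a shift independent of $\rho$, the number of beads $\ge d$; so ``$m_d(Q_N)\le m_d(\xi)$ for all $d$'' is precisely the termwise domination of the decreasing bead sequences, which is equivalent to $Q_N\subseteq\xi$ as partitions, i.e.\ to $Q_N$ being a subshape of $[\xi]$ realised inside $[\bnu]$. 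Since $N\in Q_N$, this proves the claim for standard monomials. For a general $v'\in\mathcal{V}_v$, by \cref{L:AccessibleSets} the $i_1$-nodes reached by $\mathfrak{A}$ form a set in $A(\mathfrak{A})$, and one reduces to the standard case by decomposing $v'$ into standard monomials of $\mathcal{W}_v\subseteq\mathcal{V}_v$ and using that the extra strings cannot move leftward past the crossings they cut, so that the $i_1$-nodes reached in each standard summand still lie inside a subshape of $[\xi]$.

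The main obstacle I anticipate is making that last reduction airtight: the quadrant argument genuinely uses standardness of $v'$, so one must argue carefully — via \cref{L:Stubborn3Sets,L:AccessibleSets} and the stubbornness of the set $\mathfrak{A}$ — that it suffices to test standard monomials, rather than wave it through. A secondary point needing a clean statement is the Maya-diagram equivalence between residue-multiset domination of $e$-small straight shapes (with a fixed corner residue) and the subshape relation; it is elementary, but worth isolating as a small lemma.
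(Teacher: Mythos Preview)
Your proposal is correct and follows the same approach as the paper, which dispatches the lemma in a single sentence just before its statement: an extra string at $N$ forces (by standardness) all nodes south-east of $N$ to be occupied by extra strings, so the residue multiset of that quadrant is dominated by that of $[\xi]$, whence $N$ lies in a subshape of $[\xi]$. Your Maya-diagram justification of the last implication and your explicit flag about reducing from arbitrary $v'\in\mathcal{V}_v$ to standard monomials are expansions of points the paper leaves implicit; the paper does not address the non-standard case separately either, so your level of rigour already exceeds the original.
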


\begin{lem} \label{L:ImmobileResidue}
Let $v \in S^\bnu$ be a monomial such that the basic strings do not cross each other. Suppose the extra strings can be arranged in an $e$-small straight shape $[\xi]$. Let $s$ be a basic string in $v$. If $s$ reaches a node which is to the left or above a string of a residue not appearing in $[\xi]$, then $s$ is an immobile string.
\end{lem}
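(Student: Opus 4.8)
The plan is to argue by contradiction together with the accessibility machinery developed in Section \ref{S:Stubborn}, mimicking the structure of the proof of \cref{L:OneRowImmobile}. Suppose $s$ has residue $i$, reaches a node $N$, and is not immobile. Then there is some $v' \in \mathcal{V}_v$, obtained from $v$ by cutting at least one $(i,i)$-crossing of $s$, with $v' \neq 0$. Our goal is to show that any such $v'$ must in fact be zero, forcing $s$ to be immobile. The key structural input is the hypothesis that the basic strings of $v$ do not cross each other: this means every $(i,i)$-crossing of $s$ in $v$ is a crossing of $s$ with an \emph{extra} string. Consequently, after cutting one such crossing, some extra $i$-string is diverted and, chasing the tree diagram, we can arrange for an extra $i$-string to reach the node $N$ in $v'$ (or at least to reach some node in the accessible set that lies at or to the left of where $s$ sat), while $s$ itself is pushed to the right.

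The first step is to pin down the residue $i$ of $s$. By assumption, $s$ reaches a node $N$ which lies to the left of, or directly above, a string $t$ whose residue $j$ does not appear in $[\xi]$; since $[\xi]$ is a straight shape, being immediately to the left of an $i$-node the node $N$ is in the same row as the node of $t$, or a higher row. The point is that in $\bnu$, the residues along a row increase by $1$ at each step, so if $N$ is strictly to the left of the node occupied by $t$, then $j = i + (\text{horizontal distance}) \pmod e$, and if $N$ is strictly above, the column/residue bookkeeping is similarly determined; in all cases, the fact that residue $j$ is \emph{absent} from $[\xi]$ tells us that the $i_1$-node of $[\xi]$ nearest $N$, or rather the specific node-shape around $N$, cannot sit inside any subshape of $[\xi]$. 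This is precisely the hypothesis needed to invoke \cref{L:SubshapeCondition}.

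The second and main step is to apply \cref{L:SubshapeCondition}: in any $v' \in \mathcal{V}_v$, the extra $i_1$-strings can only reach $i_1$-nodes of $\bnu$ that lie inside a subshape of $[\xi]$, and more generally the extra strings, being a stubborn set (by \cref{L:StronglyStubbornSet}, since the basic strings form a non-crossing set $\mathcal{L}$ and any basic--extra crossing has the extra string larger — this is the standard setup in $\Std^e$), cannot collectively escape the subshape structure of $[\xi]$. Now in $v'$ the node $N$ (or the node that an extra string has been diverted toward after cutting the crossing of $s$) must be filled by an extra string; but standardness then forces every node south-east of it to be filled by extra strings too, so the extra strings would have to occupy a shape containing $N$ and its south-east removable complement. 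Because the residue $j$ does not appear in $[\xi]$, no such filling is compatible with the extra strings being arrangeable only inside subshapes of $[\xi]$: a residue mismatch arises, exactly as in the final step of \cref{L:OneRowImmobile} where a node outside every $[\xi^j]$ forces the diagram to vanish. Hence $v' = 0$, a contradiction, and $s$ is immobile.

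The main obstacle I anticipate is the bookkeeping in the first step — carefully translating ``to the left or above a string of a residue not in $[\xi]$'' into the statement that the relevant node (or the node reached by a diverted extra string) cannot lie in any subshape of $[\xi]$, since $[\xi]$ is only specified up to congruence and the residue $i_1$ on its main diagonal. One must handle the ``left'' and ``above'' cases separately and be careful that after cutting a crossing the extra string might reach not $N$ itself but some accessible node; \cref{L:Stubborn3Sets} and the max-flow argument behind \cref{L:AccessibleSets} are what control where it can land, and one should check that landing anywhere in the relevant accessible set still produces the residue obstruction. The rest is a routine combination of \cref{L:SubshapeCondition}, \cref{L:StronglyStubbornSet}, and the standardness constraint on extra strings, exactly parallel to the proof of \cref{L:OneRowImmobile}.
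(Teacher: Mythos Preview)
Your approach is genuinely different from the paper's, and as written there is a real gap.

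The paper does not argue by contradiction via a diverted extra string. Instead it gives a short induction on the distance from $N$ to the nearest node whose residue is absent from $[\xi]$. The base case is when $s$ itself has such a residue: since basic strings meet only extra strings, all of which carry residues from $[\xi]$, $s$ has no $(i,i)$-crossings at all and is vacuously immobile. The inductive step is a squeeze: if the node $N'$ immediately to the right of $N$ is already known to be immobile (reached by the basic string $s+1$), then in every $v'\in\mathcal{V}_v$ the entry at $N'$ is still $s+1$, so standardness forces the entry at $N$ to be strictly less than $s+1$; on the other hand co-stubbornness of $s$ (the dual of \cref{L:Stubborn1}) forces that entry to be at least $s$. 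Hence only $s$ can sit at $N$, and once a crossing of $s$ has been cut the dual of \cref{L:Stubborn3} says $s$ cannot reach $N$ either, so $v'=0$. The case where $N'$ lies immediately below is handled by the same squeeze after first using the ``right'' case along the row of $N'$.

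The gap in your argument is the claim that, after a crossing of $s$ is cut, an \emph{extra} string must occupy $N$ in the resulting diagram (or in every standard monomial of its decomposition). Co-stubbornness of $s$ tells you only that the string at $N$ is $\geq s$; nothing you invoke rules out that some other \emph{basic} $i$-string $s''>s$, itself co-stubborn and therefore free to drift leftward when its own crossings with extra strings are cut, lands at $N$. In that situation your ``southeast of $N$ contains a bad residue, so extra strings cannot fill it'' argument never fires, and your appeal to \cref{L:SubshapeCondition} (which in any case speaks only about $i_1$-strings, not arbitrary residues) does not help. The paper's induction closes exactly this hole: the immobile neighbour at $N'$ caps the entry at $N$ from above by $s+1$, eliminating every candidate except $s$ itself. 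Your instinct to model the argument on \cref{L:OneRowImmobile} is reasonable, but that proof works because in the one-row setting there is a single extra string of the relevant residue and it is the only thing that can displace $s$; in the multi-row setting the inductive anchor is what plays that role.
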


\begin{proof}
Let $s \in \{1, \dots, n\}$ be a basic string of residue $i$. Suppose first that $i$ is not a residue inside the shape $[\xi]$. String $s$ can only intersect extra strings, so it does not meet any other $i$-strings, which shows that $s$ must be immobile.

Suppose instead that string $s$ reaches a node $N$ such that the basic string $s+1$ reaching the node $N'$ to the immediate right of $N$ is immobile. Extra strings are all greater than $s+1$, so they cannot reach $N$ in the monomial for a standard tableau. On the other hand, the basic strings are co-stubborn, so only a string greater than or equal to $s$ can reach $N$ in any $v' \in \mathcal{V}$. But there are no strings between $s$ and $s+1$, so $s$ must also be immobile. 

Similarly, suppose that string $s$ reaches a node $N$ such that the basic string $s'$ reaching the node $N'$ immediately below $N$ is immobile. Then for the same reasoning as above, only a basic string which reached a node to the right of $N$ in $\row(N)$, or a node to the left of $N'$ in $\row(N')$, can reach $N$ after a cut. However, each string reaching a node to the left of $N'$ in $\row(N')$ is immobile by the previous paragraph. Suppose instead that there is a node to the right of $N$ in $\row(N)$ with the same residue as $N$. Then there must also be a node of a residue not appearing in $[\xi]$ between $N$ and this other node, and therefore $s$ must be immobile by the previous paragraph. The lemma follows by an easy induction.
\end{proof}

\begin{lem} \label{L:ProofMaxFlow}
Let $v \in S^\bnu$ be a standard monomial with associated tableau $\t \in \Std_{n}(\bnu)$, and suppose that the extra strings can be arranged in an $e$-small removable straight shape $[\xi]$. We label the residue of the main diagonal of $[\xi]$ as $i_1$. Let $[\rho] \subset [\bnu]$ be another removable $e$-small straight shape. Suppose that there are $r$  non-immobile basic strings of residue $i_1$ reaching $[\rho]$ and $k$ extra strings of residue $i_1$ reaching nodes smaller than all nodes in $[\rho]$. Then if we apply more than $\min(r, k)$ dots to different basic $i_1$-strings reaching shape $[\rho]$, we obtain zero.
\end{lem}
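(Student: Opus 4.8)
The plan is to pass to the expansion of the dotted diagram into the standard basis of $S^\bnu$ using only crossing-breaking relations, to understand what each dot is forced to do in a surviving basis element, and then to derive a contradiction by a counting/max-flow argument on the graph of residue $i_1$ associated with $v$ (in the sense of \cref{D:AccessibleSets}). First a reduction. By \cref{E:DotSlide} a dot travels up its own strand and produces an error term only when it is pushed past an $(i_1,i_1)$-crossing of that strand; the branch in which it reaches the top of the diagram vanishes by \cref{E:Specht}. Hence a dot on an immobile basic $i_1$-strand annihilates the diagram, since every branch in which the dot cuts a crossing lies in $\mathcal{V}_v$ with an $(i_1,i_1)$-crossing of that strand cut, and so vanishes by the definition of immobility. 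As the commuting dots can be factored out one at a time, we may assume every dotted strand is non-immobile; then if $r\le k$ there are not enough non-immobile basic $i_1$-strands reaching $[\rho]$ to carry more than $\min(r,k)=r$ dots, and we are done. So from now on I would assume $k<r$ and that $k+1$ dots sit on distinct non-immobile basic $i_1$-strands $s_1,\dots,s_{k+1}$ reaching nodes $N_1,\dots,N_{k+1}\in[\rho]$.

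The next step is to record the structural facts the argument will use. Because $\t\in\Std_{n}(\bnu)$, each basic strand $p\le n$ joins the $p$-th node of $[\bnu]$ (in reading order) at the bottom to the $p$-th node of $\Shape(\t_{\downarrow n})$ at the top, so the basic strands occur in the same left-to-right order along the top and along the bottom and therefore no two of them cross; in particular every $(i_1,i_1)$-crossing of $v$ uses an extra strand. Consequently \cref{L:StronglyStubbornSet} applies with $\mathcal{R}$ the set of extra strands and $\mathcal{L}$ the set of basic strands, so the extra strands form a strongly stubborn set; combined with \cref{L:SubshapeCondition} this confines the extra $i_1$-strands to $i_1$-nodes of subshapes of $[\xi]$ throughout the decomposition. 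Finally, by \cref{L:ImmobileResidue}, the basic $i_1$-strands that reach nodes $\prec[\rho]$ are immobile, so no non-original strand can reach such a node in a non-zero element of $\mathcal{V}_v$.

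Now fix a surviving standard monomial $v_\s$ in the expansion. Each of the $k+1$ dots must have cut one $(i_1,i_1)$-crossing of its strand, necessarily a crossing with an extra $i_1$-strand; since a crossing of a standard monomial is an inversion, this reroutes $s_j$ downward, onto the $i_1$-node previously occupied by that extra strand. The core of the proof is to show that this rerouting carries $s_j$ strictly below $[\rho]$ and onto one of the $k$ $i_1$-nodes that lie $\prec[\rho]$ and are occupied by an extra strand in $v$: swaps absorbed within $[\rho]$ are excluded by \cref{L:Stubborn3Sets} together with the strong stubbornness of the extra strands, the basic-occupied nodes $\prec[\rho]$ are locked by immobility (second paragraph), and \cref{L:SubshapeCondition} shows the extra strands supply at most $k$ free $i_1$-slots there. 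Granting this, the $k+1$ reroutings realise a flow of value $k+1$ in the residue-$i_1$ graph of $v$ from the bottoms of $s_1,\dots,s_{k+1}$ to these at most $k$ sinks, while the topmost edges of the $k$ relevant extra strands form a cut of capacity $k$; by the max-flow/min-cut theorem $k+1\le k$, a contradiction. Hence no surviving standard monomial exists and the dotted diagram is $0$.

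The step I expect to be the main obstacle is the claim in the last paragraph: showing that each of the $k+1$ reroutings is genuinely forced below $[\rho]$ and onto one of the $k$ extra-occupied $i_1$-nodes there — in particular ruling out that several of the $s_j$ are absorbed by a coordinated sequence of cuts among a smaller number of extra strands without all of them landing below $[\rho]$ — and then exhibiting the precise capacity-$k$ cut of the residue-$i_1$ network. This is exactly where the fine control over which $i_1$-nodes the extra strands can reach (from \cref{L:SubshapeCondition}), their strong stubbornness, and \cref{L:Stubborn3Sets} must be combined carefully; everything else is bookkeeping along the lines of \cref{L:MainStraight} and \cref{L:DotsHom}.
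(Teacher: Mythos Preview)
Your reductions and structural observations match the paper: basic strands do not cross (since $\t\in\Std_n(\bnu)$), so every $(i_1,i_1)$-crossing involves an extra strand; dots on immobile strands annihilate the diagram; and the extra strands form a strongly stubborn set.

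The gap you flag is genuine, and the tools you cite do not close it. You want each of the $k+1$ dotted basic strands to be forced to land at an $i_1$-node $\prec[\rho]$ in any surviving standard monomial, and then to run a single max-flow/min-cut contradiction. But \cref{L:Stubborn3Sets} together with strong stubbornness only tells you that once some extra--basic crossing is cut, the extra strands \emph{as a set} reach a collection of nodes whose maximum has moved weakly to the right; it says nothing about where an individual $s_j$ lands, and in particular does not rule out that some $s_j$ stays inside $[\rho]$ while a different (undotted) basic strand is displaced. The paper in fact never proves that each dotted strand leaves $[\rho]$. (A smaller issue: your appeal to \cref{L:ImmobileResidue} for basic $i_1$-strands reaching nodes $\prec[\rho]$ is not justified in the generality of this lemma --- that result needs the node to lie left of or above a node of residue outside $[\xi]$, which an arbitrary $i_1$-node $\prec[\rho]$ need not satisfy.)

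The paper replaces your single max-flow step by an induction on the number $j$ of dots, after ordering the dots so that each new one lies on a string strictly to the right of all previously dotted ones. Writing the dotted strings as $s_{a_1},\dots,s_{a_j}$, the invariant is weaker than yours: in every surviving basis element of $v_\t Y^j$, at least $j$ of the strings $s_1,\dots,s_{a_j}$ (all $i_1$-strings in the picture up to and including the rightmost dotted one, not only the dotted ones) are leftward, i.e.\ reach nodes $\prec[\rho]$. For the inductive step one takes a surviving summand of $v_\t Y^{j-1}$ and examines $s_{a_j}$ there: if it is still rightward, standardness forces every string crossing it to be leftward, so the new dot cuts such a crossing and $s_{a_j}$ itself becomes leftward; if $s_{a_j}$ is already leftward, then together with the $j-1$ leftward strings among $s_1,\dots,s_{a_{j-1}}$ one already has $j$. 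Either way the max-flow from $\{s_1,\dots,s_{a_j}\}$ into $[\rho]$ is at most $a_j-j$, and this bound persists under further relations. The conclusion is then the count you were after: producing $j$ leftward strings requires both $j$ basic rightward strings and $j$ extra leftward ones, so $j\le\min(r,k)$. Your max-flow intuition is exactly right, but it has to be run inductively with this weaker invariant and the left-to-right ordering, rather than applied once to all $k+1$ dots simultaneously.
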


\begin{proof}
We begin by noting that extra strings of residue $i_1$ cannot reach a node outside $[\rho]$ which is smaller than some nodes in $[\rho]$ but greater than others: such a node would be strictly west of $[\rho]$, and would necessarily also be west or north of a node with residue not appearing in $[\xi]$, so that an extra string could never reach it.

In the following diagram, we only draw non-immobile basic strings of residue $i_1$ reaching $[\rho]$ and extra strings of residue $i_1$ reaching $[\rho]$ or smaller nodes.
\begin{equation} \label{E:JKLDiagram}
\begin{braid}
	\def\h{6};
	\def\sep{1.7}
	\def\a{5+\sep};
	\def\dashpart{0.9}
	
	\draw (1,0) -- (1+\a, \h);
	\draw (1.5+\sep/2, 0) node{$\dots$};
	\draw (2 + \sep, 0) -- (2 + \a + \sep, \h);
	\draw (1+\a, 0) -- (1, \h);
	\draw (1.5 + \a + \sep/2, 0) node{$\dots$};
	\draw (1.5 + \sep/2, \h) node{$\dots$};
	\draw (2 + \sep + \a,0) -- (2 + \sep, \h);
	\draw (3 + \sep + \a, 0) -- (3 + \sep + \a, \h);
	\draw (3.5 + \a + 3*\sep/2, 0) node{$\dots$};
	\draw (4 + 2*\sep + \a, 0) -- (4 + 2*\sep + \a, \h);
	
	\draw[orange] (\a + 0.5, \h) rectangle (\a + 2*\sep +4.5 , \h + 1);
	\draw[black] (\a + \sep + 2.25, \h + 0.5) node[scale=1.3]{$\rho$};
	\draw(1.5 + \sep/2,-0.5)node{$\underbrace{\hspace*{13mm}}_{r}$};
	\draw(\a + 1.5 + \sep/2,-0.5)node{$\underbrace{\hspace*{13mm}}_{k}$};
	\draw(\a + 5.2 + \sep/2,-0.5)node{$\underbrace{\hspace*{13mm}}_{l}$};
\end{braid}
\end{equation}
Despite the way that we have drawn the diagram, it is possible for extra strings to intersect each other in $\Std_n(\bnu)$. We refer to $i_1$-strings reaching nodes to the left of $[\rho]$ as \emph{leftward}, and to other strings as \emph{rightward}. Since $[\rho]$ is $e$-small, rightward strings must reach nodes in increasing order in a standard monomial. Let $Y^j \in R_{n+\gamma}^\Lambda$ be an element applying $j$ dots to different strings out of the basic rightward strings in the diagram above. We suppose that elements $Y^j$ are constructed in a way that $Y^{j+1}$ always contains dots on the same strings as $Y^{j}$, plus an extra dot on another string to the right of all the previous ones.

Write the decomposition $ v_\t Y^j = \sum_i c_i^j v_{\t_i^j}$, where the $v_{\t_i^j}$ are basis elements and $c_i^j \neq 0$. Suppose $Y^j$ places dots on strings $s_{a_1}, \dots, s_{a_j}$. We proceed by induction on $j$, and prove the stronger result that for each $v_{\t_i^j}$, at least $j$ of the strings in ${s_1, \dots, s_{a_{j}-1}, s_{a_{j}}}$ are leftward. If $j = 0$ there is nothing to prove, so suppose that $j>0$. By induction, we may assume that $j-1$ out of the first ${s_1, \dots, s_{a_{j-1}-1}, s_{a_{j-1}}}$ strings are leftward for any $v_{\t_i^{j-1}}$. Since $v_{\t_i^{j-1}}$ is a standard monomial, each leftward string in it may only intersect smaller strings and other leftward strings, while rightward strings may only intersect leftward strings. This implies the following: let $N_{j-1}$ be the rightmost node reached by one of ${s_{1}, \dots, s_{a_{j-1}}}$. Then any nodes to the right of $N_{j-1}$ are not accessible to strings $s_{1}, \dots, s_{a_{j-1}}$. Therefore, the maximum flow from strings $s_1, \dots, s_{a_{j-1}}$ to shape $\rho$ is at most $a_{j-1} - (j-1)$.

For $v_{\t_i^j}$, suppose $s_{a_{j}}$ is rightward. Then standardness requires that all strings intersecting $s_{a_{j}}$ be leftward. After cutting a crossing of $s_{a_{j}}$, the string becomes leftward, and it may only cross other leftward strings and smaller strings. Now the node reached by $s_{a_j}$ in $v$, denoted by $N_j$, is not accessible in $v_{\t_i^j}$ either, so the maximum flow from strings  $s_{1}, \dots, s_{a_{j}}$ to $[\rho]$ is at most $a_{j} - j$. Suppose instead that $s_{a_{j}}$ is leftward. By induction, we also have that at least $j-1$ strings out of $s_1, \dots, s_{a_{j-1}}$ are leftward. Then there are at least $j$ leftward strings out of $s_1, \dots, s_{a_{j}}$, and the argument in the previous paragraph confirms that the maximum flow from strings $s_1, \dots, s_{a_{j}}$ to shape $[\rho]$ is at most $a_{j} - j$. 

Finally, for $j$ basic rightward strings to become leftward, there need to be at least $j$ basic rightward strings in $v$ to begin with, and at least $j$ extra leftward strings. Therefore, if we apply more than $\min(r, k)$ dots, there are no standard monomials that we could reach, so the result must be zero.
\end{proof}

Let $d_\rho$ be the number of $i_1$-nodes in $[\rho]$ which are inside a removable subshape of $[\xi]$. Then, according to \cref{L:SubshapeCondition} and using $l_\rho$ for the $l$ appearing in \cref{E:JKLDiagram}, we have $r \leq d_\rho - l_\rho$. We will need the following lemma about tableaux to prove that the homomorphisms in \cref{T:MainStraight} send the Specht generator $v_{\t^\blam}$ to a single standard monomial.

\begin{lem} \label{L:StandardBitableaux}
Let $[\xi]$ and $[\rho]$ be $e$-small straight shapes with nodes of residue $i_1$ inside their main diagonals such that $[\xi]$ is \emph{not} a subshape of $[\rho]$. Place the two diagrams in two components to form a bipartition $(\rho, \xi)$. Let $\t_{\xi, \rho}$ be defined as in \cref{L:MainSubshape}. Let $\mathcal{T}_{\t_{\xi,\rho}}$ be the set of standard tableaux of shape $(\rho, \xi)$ with the same residue sequence as $\t_{\xi, \rho}$. Let $H_1 \subset [\rho]$ be the largest hook in $[\rho]$. Then, for any $\t \in \Std(\rho,\xi)$ and any $N \in H_1$ such that $\t^{-1}(\t_{\xi,\rho}(N))$ is a node in the first component, $\t \notin \mathcal{T}_{\t_{\xi,\rho}}$.
\end{lem}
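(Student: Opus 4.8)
The plan is to prove the statement by reconstructing a standard tableau of the bipartition $(\rho,\xi)$ from its residue sequence, using standardness together with the non-subshape hypothesis to force rigidity, and then reading off the claimed obstruction.

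First I would write down the residue sequence of $\t_{\xi,\rho}$ explicitly. Since $(\t_{\xi,\rho})_{\downarrow\gamma'}=(0,\t^{\xi})$, the first $\gamma'$ residues of $\t_{\xi,\rho}$ are the residues of $[\xi]$ read along its rows starting from $i_1$ at the top-left, and the last $\gamma$ residues are the residues of $[\rho]$ read along its rows, again starting from $i_1$. In particular the numbers $\t_{\xi,\rho}(N)$ with $N\in H_1$ are exactly $\gamma'+1,\dots,\gamma'+\rho_1$ (the first row of $[\rho]$, with residues $i_1,i_1+1,\dots$) together with $\gamma'+1,\ \gamma'+1+\rho_1,\ \gamma'+1+\rho_1+\rho_2,\dots$ (the first column of $[\rho]$, with residues $i_1,i_1-1,\dots$). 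Because $[\rho]$ is $e$-small these residues are pairwise distinct, which is what makes the reconstruction below deterministic; I would fix this list of "hook numbers" and their residues once and for all.

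Next, let $\t\in\mathcal T_{\t_{\xi,\rho}}$ and assume in addition that every $\t_{\xi,\rho}(N)$ with $N\in H_1$ is placed by $\t$ in the $\rho$-component. I would show by induction on $m$ that then $\t_{\downarrow m}=(\t_{\xi,\rho})_{\downarrow m}$. A standard tableau is determined by its chain of shapes, and the fixed residue sequence prescribes that at step $m$ one adjoins an addable box of residue $\res_{\t_{\xi,\rho}}(m)$ to $\Shape(\t_{\downarrow m-1})$. The confinement assumption forces the critical propagation: since the only addable $i_1$-box of $[\rho]$ available when the $\rho$-component is empty is its top-left corner, the requirement that the number $\gamma'+1$ sit in the $\rho$-component forces the $\rho$-component of $\t_{\downarrow\gamma'}$ to be empty, hence $\Shape(\t_{\downarrow\gamma'})=(\varnothing,\xi)$, and thereafter every box must enter $[\rho]$; distinctness of residues along the first row and first column of $[\rho]$ then pins the position of each subsequent box uniquely, giving $\t=\t_{\xi,\rho}$. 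Equivalently, any $\t\in\mathcal T_{\t_{\xi,\rho}}$ other than $\t_{\xi,\rho}$ must place $\t_{\xi,\rho}(N)$ outside the $\rho$-component for some $N\in H_1$, which is the content of the lemma.

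The main obstacle I anticipate is turning this reconstruction into the dichotomy actually used in the application, and this is exactly where $[\xi]\not\subseteq[\rho]$ enters: one chooses the least $j$ with $\xi_j>\rho_j$ and compares the growth of the $\xi$-component of $\t$ forced by the residue sequence (the small numbers must be placed as early as standardness allows) against the room in the first hook $H_1$ of $[\rho]$. The overflow produces a residue — either $i_1+\rho_1$, from a first row of $[\xi]$ longer than that of $[\rho]$, or $i_1-(j-1)$, from $[\xi]$ having more rows than the leg of $H_1$ — which, under the $\rho$-confinement of the hook numbers, has already been consumed along $H_1$, so the corresponding step admits no legal box and such a $\t$ cannot exist. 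Making this room-count uniform in the two overflow cases, and separately handling the degenerate case where $[\rho]$ is itself a hook (so $H_1=[\rho]$ and confining all of $[\rho]$'s numbers to the $\rho$-component already fixes it as a set, hence fixes $\t$), is the technical heart of the argument; the rest is the bookkeeping of addable-box residues described above.
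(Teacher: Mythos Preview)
There is a genuine gap, and its root is a typo in the statement that you have followed literally: the paper's own proof makes clear that $H_1$ is the largest hook of $[\xi]$ (the second component), not of $[\rho]$. With the corrected reading the numbers $\t_{\xi,\rho}(H_1)$ are the \emph{small} entries $1,2,\dots$ filling the first hook of $[\xi]$, and the claim is that no $\t\in\mathcal T_{\t_{\xi,\rho}}$ places any of these in the first component. Under your reading the statement is simply false, since $\t_{\xi,\rho}$ itself puts every entry of $\t_{\xi,\rho}(H_1)$ in the first component; you noticed this and silently replaced the conclusion by ``$\t=\t_{\xi,\rho}$'', which is neither the stated lemma nor the intended one. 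Even for that modified target, your inductive step is not justified: the sentence ``the only addable $i_1$-box of $[\rho]$ available when the $\rho$-component is empty is its top-left corner, so $\gamma'+1\in[\rho]$ forces the $\rho$-component of $\t_{\downarrow\gamma'}$ to be empty'' assumes its own conclusion. Nothing you have said rules out some of $1,\dots,\gamma'$ entering $[\rho]$ first, making $(2,2,1)$ an addable $i_1$-node for $\gamma'+1$; excluding this is exactly the substance of the lemma. Your overflow analysis is also incomplete: it treats only $\xi_1>\rho_1$ or $[\xi]$ taller than $[\rho]$, but ``not a subshape'' means $\xi_j>\rho_j$ for some $j$, which may be an intermediate row (e.g.\ $\xi=(3,3)$, $\rho=(4,2)$).

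The paper's argument is structurally different and is where the non-subshape hypothesis does real work. After reducing to the case $\t^{-1}(1)\in[\rho]$ (using that each hook $H_j$ of $[\xi]$ lands in a single component) and discarding the unpaired nodes of $[\rho]$, it analyses the placement of $\gamma'+1,\dots,\gamma'+\gamma$ \emph{diagonal by diagonal}: within each residue class the positions are forced, and the maximal entry of each diagonal must sit in the outermost rim hook of $[\xi]$. The contradiction occurs at the first diagonal $d^*$ with $|D'_{d^*}|>|D_{d^*}|$, whose existence is equivalent to $[\xi]$ not being a subshape of $[\rho]$. This diagonal bookkeeping is the missing idea in your approach.
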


\begin{proof}
Let $\gamma$ be the number of nodes in $[\rho]$ and $\gamma'$ be the number of nodes in $[\xi]$. Write $H_1, H_2, \dots, H_h$ for the sets of nodes in the hooks of $[\xi]$ inside $(\rho, \xi)$ ordered so that $(j,j,2) \in H_j$ for each $j$. It is easy to show that, if $\t \in \mathcal{T}_{\t_{\xi, \rho}}$, then the nodes in $\t^{-1}(\t_{\xi,\rho}(H_j))$ must all lie in the same component for each $j$. In particular, it follows that all residues in $[\xi]$ are also residues in $[\rho]$, since the hook $H_1 \subseteq [\xi]$ must fit inside $[\rho]$. We assume, towards a contradiction, that there exists a standard $(\rho,\xi)$-tableau $\t$ such that $\t^{-1}(1)$ is a node in the first component of $(\rho, \xi)$ and $\t \in \mathcal{T}_{\t_{\xi,\rho}}$.

Let us pair up the nodes in the two components of $(\rho, \xi)$ so that $N = (a,b,1)$ is paired up with $N' = (a,b,2)$. Since $[\xi]$ is not a subshape of $[\rho]$, it follows that there is at least one node of $[\xi]$ which is not paired up. There may also be a subset of unpaired nodes in $[\rho]$, which we denote by $P$. We already argued in the proof of \cref{L:MainSubshape} that the strings reaching these nodes in a diagram are immobile, and the argument can be adapted to tableaux to show that for any tableau $\t \in \mathcal{T}_{\t_{\xi,\rho}}$ the numbers inside $P$ are the same as they are in $\t_{\xi,\rho}$. For this reason, we may ignore $P$, and assume in this way that each node in $[\rho]$ is paired up.

Let $R \coloneq (\rho, \xi) \backslash \t^{-1}(\{1,\dots,\gamma'\}) = (\rho, \xi) \backslash \t^{-1}(\t_{\xi,\rho}(\cup_{1 \leq j \leq h}H_j))$. Let $D_1, D_2, \dots, D_d$ be the sets of nodes in the diagonals of $[\rho]$ inside $(\rho, \xi)$, ordered from the top-right diagonal to the bottom-left diagonal, and let $D'_1, D'_2, \dots, D'_d$ denote the sets of nodes in the diagonals of $[\xi]$ ordered in the same way. Let $DR_1, DR_2, \dots, DR_d$ be the sets of nodes in $R$ with the same residues as $D_1, D_2, \dots, D_d$ respectively. Clearly, $D_j$ has the same cardinality as $DR_j$ for any $j$. Let $d^*$ be the minimal such that $|D'_{d^*}| > |D_{d^*}|$. The assumption that $[\xi]$ is not a subshape of $[\rho]$ guarantees the existence of $d^*$.

In tableau $\t$, the numbers $\gamma' + 1, \dots, \gamma' + \gamma$ fill the nodes in $R$ with the appropriate residues, but we will show that this cannot be done while maintaining standardness. To accomplish this, we first show that there is at most one way of filling the nodes in $DR_1, \dots, DR_{d^*- 1}$ with the numbers $\t_{\xi, \rho}(D_1), \dots, \t_{\xi, \rho}(D_{d^* - 1})$ respectively without breaking standardness. Moreover, the maximal number inside each diagonal $D_1, \dots, D_{d^* - 1}$ in tableau $\t_{\xi,\rho}$ must be placed inside the maximal rim hook of $[\xi]$ in tableau $\t$.

We work by induction. Since $D_1$ and $DR_1$ both contain a single node, it is obvious that $DR_1$ can be filled in a unique way by the number $\t_{\xi,\rho}(D_1)$. Let $N$ be the unique node inside $D_1$. By assumption, the numbers in $\t_{\xi,\rho}(H_1)$ must be inside the first component in $\t$, so that $N \notin R$. If the node $N$ is paired up with $N'$, then $DR_1 = \{N'\}$, and in particular $DR_1$ is part of the maximal rim hook of $[\xi]$. 

Next, let $j < d^*$ and suppose that $DR_{j-1}$ can be filled in a unique way with the numbers in $\t_{\xi, \rho}(D_{j-1})$. We must show that, in this case, there is at most one way of filling the nodes in $DR_{j}$ with the numbers in $\t_{\xi, \rho}(D_j)$. First, we assume that the diagonal $D_j$ is either to the north-east of the main diagonal of $[\rho]$, or it is itself the main diagonal, so that if $N_1 < N_2 < \dots < N_t$ are the nodes of $D_j$, then $\alpha \coloneq \t_{\xi, \rho}(N_1)$ is smaller than all the numbers in $\t_{\xi, \rho}(D_{j-1})$. Next, note that either $|DR_j| = |DR_{j-1}|$ or $|DR_j| = |DR_{j-1}| + 1$. First we assume that $|DR_j| = |DR_{j-1}| + 1$.
Let $DR_j^\uparrow \subseteq DR_j$ be the set of nodes in $DR_j$ such that the node above them is in $DR_{j-1}$. Then it can be checked that $|DR_j^\uparrow| = |DR_j| - 1$.  That is, each node in $DR_j$, except one node which we call $M$, has a node in $DR_{j-1}$ above it. Standardness then requires that $\alpha$ is placed in node $M$ in $\t$. Similarly, $\t_{\xi, \rho}(N_2)$ may only be placed in the node below the node housing the minimal number in $\t_{\xi, \rho}(D_{j-1})$, and so on for each number in $\t_{\xi,\rho}(D_j)$. In particular, the largest number in $\t_{\xi,\rho}(D_j)$ must be below the largest number in $\t_{\xi,\rho}(D_{j-1})$, which is in the maximal rim hook of $[\xi]$ by induction. Entirely similar arguments cover the case where $|DR_j| = |DR_{j-1}|$ (we would introduce the set $DR_{j}^{\rightarrow}$) and the cases where $D_j$ is to the south-west of the main diagonal of $[\rho]$.

Finally, we reach the diagonal $D_{d^*}$. First, we assume that $D^*$ is either to the north-east of the main diagonal of $[\rho]$, or it is itself the main diagonal. Consider the lowest node in $D'_{d^*}$, which we call $A$. The node above $A$ in $\t$ contains $m \coloneq \max \t_{\xi,\rho}(D_{d^*-1})$ and the condition $|D'_{d^*}| > |D_{d^*}|$ implies that $|D_{d^*}| =  |D_{d^* - 1}|$, from which it follows that all numbers in $\t_{\xi, \rho}(D_{d^*})$ are smaller than $m$. Therefore, there is no number in $\t_{\xi,\rho}(D_{d^*})$ which can be placed inside $A$ in tableau $\t$ to make it a standard tableau, which contradicts our assumption. A similar argument covers the case where $D_{d^*}$ is to the south-west of the main diagonal of $[\rho]$. Therefore, there does not exist a tableau $\t$ satisfying the properties in the statement.
\end{proof}

Now we are ready to prove our main theorem. Recall that the residue of the nodes of the main diagonal of $[\mu^*]$ is taken to be $i_1$. As in the previous section, we define $e_\mu^\lambda = \sum_{j \in I^n} e(j_1, \dots, j_n, i_1, \dots, i_\gamma)$ and, if there are $m$ basic strings with residue $i_1$, then $\varepsilon_m^d$ is the elementary symmetric polynomial of degree $d$ on $m$ variables, which acts on $S^\bnu$ by attaching dots to the basic $i_1$ strings. 

\begin{proof}[Proof of \cref{T:MainStraight}]
As in the previous section, we will prove the main theorem by applying \cref{L:EllersMurray}. Let $i_1$ be the residue of the main diagonal of $[\lambda^*]$. Label the basic $i_1$-strings as $s_{a_1}, \dots, s_{a_m}$, so that there is an action of $k[x_1, \dots, x_m]$ on $S^\blam$ by applying dots to $i_1$-strings. We argue that $L = \varepsilon_m^d e^\blam_\bmu$ satisfies all three conditions of \cref{L:EllersMurray}.  

First we will check conditions \textbf{(A)} and \textbf{(B)} of \cref{L:EllersMurray}. Let $v_\t \in S^\bnu$ be the basis element of $S^\bnu$ corresponding to tableau $\t \in \Std(\bnu)$. Since $L$ commutes with the natural inclusions of $R_n$ and $R_\gamma$ into $R_{n+\gamma}$, we may assume that $\t \in \Std_{n}(\bnu)$ and that the extra strings do not cross each other. Since $L$ includes the idempotent $e^\blam_\bmu$, we can further assume that $\t \in \Std_\blam^e(\bnu)$, as $v_\t L = 0$ otherwise.

By \cref{L:ImmobileResidue}, strings reaching nodes which are above or to the left of a node with a residue not appearing in $[\lambda^*]$ are immobile. We may ignore these strings according to \cref{L:ImmobileIgnore}. Moreover, any string reaching a node $N \prec [\mu^*]$ is immobile, so they may be safely ignored too. Finally, if a basic string reaching a node $N \succ [\lambda^*]$ in $v_\t$ is cut, the new diagram must be in $\hat{S}_\bnu$ because the extra strings form a strongly stubborn set due to \cref{L:StronglyStubbornSet}, taking $\mathcal{R}$ in that lemma to be the set of extra strings. We immobilize these strings by using \cref{L:ImmobileTrick}. This is equivalent to working in the quotient space $S^\bnu / \hat{S}_{\bmu}^\bnu$, which suits our purposes. Furthermore, according to \cref{L:ImmobileResidue}, any string reaching a node located to the north-west of a node with a residue not appearing in $[\mu^*]$ is immobile and may be ignored.

After removing all nodes to the north-west of any node having a residue outside the residues appearing in $[\mu^*]$, we are left with the maximal (under inclusion) removable $e$-small shapes with residues appearing inside $[\mu^*]$. According to the above, we only need to consider those shapes between $[\mu^*]$ and $[\lambda^*]$. We label these shapes as $[\mu^*] = [\rho^0] \prec \dots \prec [\rho^{c'}] = [\lambda^*]$.

Conditions \textbf{(A)} and \textbf{(B)} of \cref{L:EllersMurray} follow readily from \cref{L:ProofMaxFlow}, as we see now. Suppose $v_\t \in \check{S}_{\t^\blam}^\bnu$. Let $d_{\rho^j}$ be the number of $i_1$-nodes inside $[\rho^j]$ which are inside a subshape of $[\mu^*]$, as in \cref{L:ProofMaxFlow}. The elementary symmetric polynomial $\varepsilon_m^d$ applies $d$ dots to different strings, where $d = \sum_{j=1}^{c'} d_{\rho^j}$. We know from \cref{L:ProofMaxFlow} that we can apply at most $d_{\rho^j}$ dots to the strings reaching $[\rho^j]$ for $j \in \{1, \dots, c'\}$. Furthermore, the application of $d_{\lambda^*}$ dots to the basic $i_1$-strings reaching $[\lambda^*]$ ensures that all extra $i_1$-strings reach $[\lambda^*]$ in all the standard monomials that $v_\t L$ can decompose into, according to \cref{L:ProofMaxFlow}. (Recall that we have immobilized all strings after $[\lambda^*]$.) Standardness then requires that all extra strings reach $[\lambda^*]$. This shows that $v_\t L \in \hat{S}_{\t_\bmu}^\bnu$, which proves condition \textbf{(A)}.

Next suppose that $v_\t \in \check{S}_{\blam}^\bnu$. We write $l_{\rho^j}$ for the number of extra strings which reach $\rho^j$ in $v_\t$, as in diagram~\ref{E:JKLDiagram}. Then $l_{\mu^*} < \gamma$, so that $l_{\rho^j} \geq 1$ for some $j \geq 1$. Therefore, applying $d$ dots to the $i_1$-strings reaching the different $[\rho_j]$ gives zero by \cref{L:ProofMaxFlow}. (Recall that we are working modulo $\hat{S}_{\bmu}^\bnu$.) This proves condition \textbf{(B)}.

Finally, we prove condition \textbf{(C)}. We must show that $v_{\t_\blam^\bnu}L \notin \hat{S}^\bnu_\bmu$. It is not necessary to keep track of the signs, so we perform computations up to sign.

As we have seen, the only non-zero terms of $v_{\t_\blam^\bnu} \varepsilon_m^d$ are obtained from the summands of $\varepsilon_m^d$ which apply $d_{\rho^j}$ dots to the basic strings reaching $[\rho^j]$ for each $j \in \{0,1, \dots, c'\}$. More is true: applying a dot to an $i_1$-string reaching a node which is not inside a subshape of $[\mu^*]$ gives zero. That is, the only non-zero term results from applying a dot to each $i_1$-string which reaches a node inside a subshape of $[\mu^*]$. We prove this claim next by induction on $j$.

When $j = 0$, there is nothing to prove, as no basic strings reach $[\rho^0] = [\mu^*]$, so we let $j \in \{1, \dots, c'\}$. For $\rho^1, \dots, \rho^{j-1}$, the inductive procedure lets us assume that the only non-immobile strings reaching these shapes are the $d_{\rho^1} + \dots + d_{\rho^{j-1}}$ $i_1$-strings reaching nodes inside a subshape of $[\mu^*]$, and $L$ applies a dot to each of these strings in any non-zero summand of $v_{\t_\blam^\bnu} L$. 
The intersections between the basic strings reaching $[\rho^j]$ and the extra strings form a diagram which looks like  $v_{\t_{\rho^j, \mu^*}}$, as defined in \cref{L:MainSubshape}. An application of \cref{L:DotsHom} shows that we may apply Specht relations to the north-west of the intersections of the extra strings with the strings reaching $[\rho^j]$. We illustrate this procedure in the following sketch, where, for each set of strings reaching each of $[\rho^0], \dots, [\rho^{c}]$, we draw a single string representing the whole set for graphical clarity. Moreover, $L'$ represents the part of $L$ acting on the strings reaching shapes $[\rho^1]$ through $[\rho^{j-1}]$, and we have drawn a red segment at the point where we wish to apply Specht relations.
$$\begin{braid}
	\def\h{12};        
	\def\sep{2};      
	\def\a{5+\sep};    
	\def\dashpart{0.9}; 

	\draw (-8,0) -- (-6, \h);
	\draw (-4,0) -- (-2, \h);
	\draw (-2,0) -- (0, \h);
	\draw (2, 0) -- (4, \h);
	\draw(4, 0) -- (-8, \h);
	\draw[red] (-2.25, 4.5) -- (-1.75, 7.5);

	\draw[brown] (-8.8, 12) rectangle (-7.2, 13);
	\draw (-8, 12.5) node{$\mu^*$};

	\draw[brown] (-6.8, 12) rectangle (-5.2, 13);
	\draw (-6, 12.5) node{$\rho^1$};

	\draw (-6, 0) node{$\cdots$};
	\draw (-4, 12.5) node{$\cdots$};

	\draw[brown] (-2.8, 12) rectangle (-1.2, 13);
	\draw (-2, 12.5) node{$\rho^{j-1}$};
	
	\draw[brown] (-0.8, 12) rectangle (.8, 13);
	\draw (0, 12.5) node{$\rho^{j}$};

	\draw (0, 0) node{$\cdots$};
	\draw (2, 12.5) node{$\cdots$};
	
	\draw[brown] (3.2, 12) rectangle (4.8, 13);
	\draw (4, 12.5) node{$\lambda^*$};

	\draw[black, sharp corners, fill=white] (-8.5, 1) rectangle (-2.5, 3);
	\draw[black] (-5.5, 2) node[scale=1.3]{$L'$};
\end{braid}$$

 We may now restrict our attention to the extra strings and the strings reaching $[\rho^j]$. More specifically, we look at the part of the diagram where these strings meet: above this subdiagram, we can apply the Specht relations for the bipartition $(\mu^*, \rho^j)$. For this bipartition, an application of \cref{L:StandardBitableaux} shows that the strings reaching nodes in $[\rho^j]$ which are not part of a subshape of $[\mu^*]$ are immobile. The proof by induction of the claim is complete.

We may now make use the of claim to discard the shapes $[\rho^0], \dots, [\rho^{c'}]$ and use instead $[\xi^0], \dots, [\xi^{c}]$, the maximal subshapes of $[\mu^*]$ defined at the start of the section. Note that $c \leq c'$.

Let us go back to the subdiagram of the intersections between the extra strings and the strings reaching $[\xi^j]$. An application of \cref{L:MainSubshape} shows that after applying dots to all $i_1$-strings reaching $[\xi^j]$ we obtain $v_{\t_{\xi^j,\mu^*}\sigma_{\mu^*, \xi^j}}$, up to sign. We define $\sigma'_{\mu^*, \xi^j}$ to be the permutation such that $v_{\t^{(\xi^j,\mu^*)} \sigma'_{\mu^*,\xi^j}} = v_{\t_{\xi^j,\mu^*} \sigma_{\mu^*,\xi^j}}$, where $\t^{(\xi^j,\mu^*)}$ is the initial tableau for the bipartition $(\xi^j,\mu^*)$.

After applying dots to each basic $i_1$-string reaching a node in some $[\xi^j]$ for $j \in \{1, \dots, c\}$ and applying relations for each subshape $[\xi^j]$ as in the above paragraph, we reach a new diagram $v'$. We define the $\bnu$-tableau $(\t_{\blam}^\bnu)'$ to be the tableau associated to $v'$. The tableau $(\t_{\blam}^\bnu)'$ is easily seen to be standard.  We picture $v'$ below.

$$\begin{braid}
	\def\h{12};
	\def\sep{2};
	\def\a{5+\sep};
	\def\dashpart{0.9};
	\draw (0,0) -- (0, 3);
	\draw (2, 0) node{$\dots$};
	\draw (4,0) -- (4,3);
	\draw[black, rounded corners = false] (-0.3,3) rectangle (4.3, 4.3);
	\draw[black] (2, 3.6) node[font = \fontsize{8}{10}\selectfont]{${\sigma'_{\mu^*, \lambda^*}}$};
	\draw (4, 4.3) -- (4, 12);
	\draw (2.5, 4.3) -- (2.5, 12);
	
	\draw( 0, 4.3) -- (0, 5.3);
	\draw(1.5, 4.3) -- (1.5, 5.3);
	\draw( 0, 6.6) -- (0, 12);
	\draw(1.5, 6.6) -- (1.5, 12);
	
	\draw (-2.5, 0) -- (-2.5, 5.3);
	\draw (-1, 0) -- (-1, 5.3);
	\draw(-2.5, 6.6) -- (-2.5, 7.6);
	\draw(-1, 6.6) -- (-1, 7.6);
	\draw(-2.5, 8.9) -- (-2.5, 12);
	\draw(-1, 8.9) -- (-1, 12);
	\draw[black, rounded corners = false] (-4.5,7.6) rectangle (-0.8, 8.9);
	\draw[black] (-2.6, 8.2) node[font = \fontsize{8}{10}\selectfont]{${\sigma'_{\mu^*, \xi^{c-2}}}$};

	\draw (-1.75, 0) node{$\dots$};
	\draw[black, rounded corners = false] (-2.8,5.3) rectangle (1.8, 6.6);
	\draw[black] (-0.5, 5.9) node[font = \fontsize{8}{10}\selectfont]{${\sigma'_{\mu^*, \xi^{c-1}}}$};
	
	\draw (-7.5, 0) -- (-7.5, 9.5);
	\draw (-8.25, 0) node{$\dots$};
	\draw (-9, 0) -- (-9, 9.5);
	\draw[black, rounded corners = false] (-9.3,9.5) rectangle (-5.5, 10.8);
	\draw[black] (-7.3, 10.15) node[font = \fontsize{8}{10}\selectfont]{${\sigma'_{\mu^*, \xi^{1}}}$};
	
	\draw (-7.5, 10.8) -- (-7.5, 12);
	\draw (-8.25, 0) node{$\dots$};
	\draw (-9, 10.8) -- (-9, 12);
	
	\draw (-5, 4.3) node{$\dots$};
	\draw (-5, 12.5) node{$\dots$};
	
	\draw[brown] (-9.3, 12) rectangle (-7.2, 13);
	\draw (-8.25, 12.5) node{$\mu^*$};
	\draw[brown] (-2.8, 12) rectangle (-0.7, 13);
	\draw (-1.75, 12.5) node{$\xi^{c-2}$};
	\draw[brown] (-0.3, 12) rectangle (1.8, 13);
	\draw (0.75, 12.5) node{$\xi^{c-1}$};
	\draw[brown] (2.2, 12) rectangle (4.3, 13);
	\draw (3.25, 12.5) node{$\lambda^*$};
\end{braid}$$

As in the previous section, we are using $\sigma'_{\mu^*, \xi^j}$ to refer to a permutation of the corresponding parts in $v'$ by abuse of notation. It is possible that the diagram that we have drawn is not a reduced expression, so that $v'$ may not be standard monomial. However, this problem turns out to be easily solved, as we see next.

After going up from the subdiagram $\sigma_{\mu^*, \xi^j}'$, a string either reaches $[\xi^j]$ without crossing any other strings, or it enters the subdiagram corresponding to $\sigma_{\mu^*, \xi^{j-1}}'$ next. If it reaches $[\xi^j]$, we say that $s$ is \emph{rightward} at $\sigma_{\mu^*, \xi^{j}}'$. Otherwise we say that it is \emph{leftward}. Now, strings \emph{with the same or adjacent residues} may only intersect in $\sigma_{\mu^*, \xi^{j}}'$ if one of them is leftward and the other is rightward. This is a direct consequence of the fact that strings with the same or adjacent residues do not intersect in reduced words for standard tableaux of $e$-small shape: indeed, leftward strings form a standard tableau of shape $[\mu^*]$, while rightward strings form a standard tableau of shape $[\xi^j]$. It follows that strings with the same or adjacent residues may not intersect twice in $v'$.

Moreover, a string $s$ in $v'$ can only be rightward at most once, and it does not intersect any strings after the subdiagram at which it is rightward. It follows that the pattern
$$
\begin{braid}
        	\draw (0,0) -- (4, 4) node[anchor=south]{$i$};
        	\draw (4,0) -- (0,4) node[anchor=south]{$i$};
        	\draw (2, 0) -- (4, 2) -- (2, 4)node[anchor=south]{$i \pm 1$};
\end{braid},
$$
where $i$ and $j$ are adjacent residues, cannot be found inside $v'$. Consequently, any double crossings 
$$
\begin{braid}
        	\draw (0,0) -- (2, 2) -- (0,4) node[anchor=south]{$i$};
        	\draw (2,0) -- (0, 2) -- (2,4) node[anchor=south]{$j$};
\end{braid}
$$
can be undone by simply pulling the $i$-string to the left past the $j$-string, leaving a new diagram $v_{(\t_{\blam}^\bnu)'} \neq 0$. It follows that $v_{\t_{\blam}^\bnu} L \notin \hat{S}^\bnu_\bmu$. This proves \textbf{(C)}, so we may apply \cref{L:EllersMurray}, and there exists a graded homomorphism $\theta$ of degree $(\deg \t_\blam^\bnu - \deg \t^\blam) - (\deg \t_\bmu^\bnu - \deg \t^\bmu) + \deg L$. Moreover, since $v_{\t_\blam^\bnu} L$ gives a single standard monomial $v''$ modulo $\hat{S}^\bnu_\bmu$, it follows that $\theta(v_{\t^\blam})$ is a single standard monomial whose diagram is given by the basic strings of $v''$.

To finish the proof we must check the given formula for the degree and establish its positivity. For $i \in I$, we define $d^\mu(i)$ to be the number of times that $i$ appears as a residue in shape $[\mu^*]$. From \cref{E:Degree}, we have $\deg \t_\blam^\bnu - \deg \t^\blam = \sum_{N^a} d^\mu(i_{N^a}) - \sum_{N^r} d^\mu(i_{N^r})$, where the first sum is over all addable nodes below $[\mu^*]$ and the second sum is over all removable nodes below $[\mu^*]$. $\deg \t_\bmu^\bnu - \deg \t^\bmu$ is computed similarly, so we have $(\deg \t_\blam^\bnu - \deg \t^\blam) - (\deg \t_\bmu^\bnu - \deg \t^\bmu) = a - b$. Clearly $\deg L = 2d$, so the degree of the homomorphism is $a - b + 2d$ as claimed.

Next we prove that $a - b + 2d \geq 0$. The term $b$, corresponding to removable nodes, is the only summand that contributes negatively. Moreover, any removable node is inside exactly one (inclusion-wise) maximal removable shape with residues appearing inside $[\mu^*]$, so that we may restrict our consideration to a single one of these shapes $[\xi]$. We label the contributions of $[\xi]$ to each summand as $a_\xi, -b_\xi$ and $2d_\xi$. We will show that $a_\xi - b_\xi + 2d_\xi \geq 0$ for any $[\xi]$, and finally that for $[\xi] = [\lambda^*]$, strict inequality holds. We write $H \subset [\xi]$ for the longest rim hook inside $[\xi]$. 

Assume first that $i_1$, the residue of the main diagonal of $[\mu^*]$, does not appear inside $H$. Without loss of generality, suppose that the residues in $H$ are residues in diagonals north-east of the $i_1$-diagonal in $[\mu^*]$. Then, for each removable node $N^r \in [\xi]$, there is an addable node $N^a$ in the next row such that $d^\mu(i_{N^r}) \leq d^\mu(i_{N^a})$. It follows that $a_\xi \geq b_\xi$, and in particular $a_\xi - b_\xi + 2d_\xi \geq 0$, as claimed.

Next, assume that  $i_1$ appears inside $H$. Label the removable nodes in $H$ on or to the north-east of the $i_1$-diagonal as $N_{1}^{r,NE}, N^{r,NE}_{2}, \dots, N^{r,NE}_{f_{NE}}$ in order down the rows. Label the addable node in the first row of $[\xi]$ as $N^{a,NE}_{1}$. If $N^{a,NE}_{1}$ is not an addable node in $\bnu$, then the maximality of $[\xi]$ implies that $d^\mu (i_{N^{a,NE}_{1}}) = 0$, so the existence of this addable node in the first row does not affect the integer $a_\xi$. Label the addable node between ${N_{1}^{r,NE}}$ and ${N^{r,NE}_{2}}$ as $N^{a,NE}_{2}$, and so on until $N^{a,NE}_{f_{NE}}$. 

Let $h^{NE} = \sum_{j=1}^{f_{NE}} d^\mu(i_{N^{r,NE}_{j}}) - d^\mu(i_{N^{a,NE}_{j}})$. Next, notice that $d^\mu(i_{N^{a,NE}_{{1}}}) \leq d^\mu(i_{N^{r,NE}_{{1}}}) \leq d^\mu(i_{N^{a,NE}_{{2}}}) \leq \dots \leq d^\mu(i_{N^{r,NE}_{{f_{NE}}}})$. This chain of inequalities implies that $d^\mu(i_{N^{r,NE}_{f_{NE}}}) \geq h^{NE}$, and more generally that 
$d^\mu(i_{N^{r,NE}_{f_{NE}}})  - d^\mu(i_{N^{a,NE}_{f_{NE}}}) + \dots + d^\mu(i_{N^{r,NE}_j}) \geq h^{NE}$ for any $0<j \leq f_{NE}$. After rearranging, we obtain
\[
d^\mu(i_{N^{r,NE}_j}) \geq h^{NE} - \left(d^\mu(i_{N^{r,NE}_{f_{NE}}})  - d^\mu(i_{N^{a,NE}_{f_{NE}}})\right) - \dots - \left(d^\mu(i_{N^{r,NE}_{j+1}})  - d^\mu(i_{N^{a,NE}_{j+1}})\right).
\]
Let us introduce the following notation: if we index the diagonals of $[\xi]$ from north-east to south-west, then given $N \in [\xi]$, we let $\diag(N)$ be the index of the diagonal of $[\xi]$ containing $N$. In particular, we have
$d^\mu(i_{N^{r,NE}_{j}}) - d^\mu(i_{N^{a,NE}_{j}}) \leq \diag(N^{r,NE}_{j}) - \diag(N^{a,NE}_{j})$,
as the numbers of nodes in adjacent diagonals in $[\mu^*]$ may differ by at most one. Further, it is easily checked that 
$\diag(N^{r,NE}_{j}) - \diag(N^{a,NE}_{j}) = \row(N^{r,NE}_{j}) - \row(N^{r,NE}_{j-1})$. Putting this together, for any $0 < j \leq f_{NE}$ we obtain
\[
d^\mu(i_{N^{r,NE}_j}) \geq h^{NE} - \left(\row(N^{r,NE}_{f_{NE}}) - \row(N^{r,NE}_{j}) \right).
\]
An entirely similar construction can be performed on the south-western part of $[\xi]$, swapping rows for columns, to obtain a similar inequality for $h^{SW}$, defined analogously to $h^{NE}$. We further define $h = \min(h^{SW}, h^{NE})$. Since $\row(N^{r,NE}_{f_{NE}}) \geq h^{NE}$ and $\col(N^{a,SW}_{f_{SW}}) \geq h^{SW}$, it follows that $[\xi]$ has at least $h$ nodes on its $i_1$-diagonal.

Consider the removable straight shape $[\xi'] \subset [\xi]$ having $h$ nodes of residue $i_1$ in its main diagonal. Then the removable nodes to the north-east of the $i_1$-diagonal of $[\xi']$ satisfy
\[
d^{\xi'}(i_{N_j^{r,NE}}) \leq h - (\row(N^{r,NE}_{f_{NE}}) - \row(N^{r,NE}_{j})) \leq h^{NE} - (\row(N^{r,NE}_{f_{NE}}) - \row(N^{r,NE}_{j})) \leq d^\mu(i_{N_j^{r,NE}}).
\]
Analogous inequalities apply for residues to the south-west of the $i_1$-diagonal in $[\xi']$. We conclude that $[\xi'] \subset [\xi]$ has at most the same number of nodes on each diagonal as $[\mu^*]$ (it is only necessary to check this property at the removable nodes of $[\xi']$). It follows that $[\xi']$ is a rank $h$ subshape of $[\mu^*]$, and consequently  the maximal subshape of $[\mu^*]$ inside $[\xi]$ has rank
\[d_\xi \geq h.\]
Without loss of generality, we will assume that $h = h^{NE} < h^{SW}$. 

First we suppose that the $i_1$-diagonal of $[\xi]$ has a removable node. Then this node is $N^{r,NE}_{f_{NE}} = N^{r,SW}_{f_{SW}}$. We wish to compute $a_\xi - b_\xi$. On the one hand, the addable and removable nodes to the north-east of the $i_1$-diagonal of $[\xi']$ up to node $N^{r,NE}_{f_{NE}}$ give a contribution of $-h^{NE}$. On the other hand, each node $N^{r,SW}_{j}$ can be paired up with the node $N^{a,SW}_{j + 1}$, so that $d^{\mu}(i_{N^{r,SW}_{j}}) \geq d^{\mu}(i_{N^{a,SW}_{j + 1}})$. Putting these together, we find that $a_\xi - b_\xi + d_\xi \geq 0$, and therefore $a_\xi - b_\xi + 2d_\xi \geq 0$, as we wished to show.

Suppose instead that the $i_1$-diagonal of $[\xi]$ does not have a removable node, so that there is an addable node between  $N^{r,NE}_{f_{NE}}$ and $N^{r,SW}_{f_{SW}}$, which we call $N^{a,M}$. Suppose first that $N^{a,M}$ is on the $i_1$-diagonal or to the south-west of it. Then a pairing similar to the last paragraph easily yields $a_\xi - b_\xi + 2d_\xi > 0$. Suppose instead that $N^{a,M}$ is to the north-east of the $i_1$-diagonal. Define $A = \diag(i_1) - \diag(N^{a,M})$, where $\diag(i_1)$ is the index of the $i_1$-diagonal. Let $g = d^\mu(i_{N^{r,SW}_{f_{SW}}}) - d^\mu(i_{N^{a,M}})$. We claim that 
\[
d_\xi \geq \min(h^{SW}, g).
\]
It is clear that $g \leq A$, as the number of nodes in each diagonal of $[\mu^*]$ is weakly increasing up to $\diag(i_1)$, and then weakly decreasing from that point, and the number of nodes in adjacent diagonals may differ by at most one. Moreover, the column containing the $i_1$-node in $H$ has at least $A$ nodes above the $i_1$-node. Therefore, to check whether the removable shape $[\xi'] \subset [\xi]$ with $g' \leq g$ nodes of residue $i_1$ in the main diagonal is a subshape of $[\mu^*]$, it is enough to consider only the south-western part of $[\xi]$. Since $\col(N^{r,SW}_{f_{SW}}) \geq h^{SW}$, we see that
\[d_\xi \geq \min(h^{SW}, g).\] 
With these bounds on $d_\xi$, we next prove our claims.

Suppose first that $h^{SW} \geq g$. From the inequalities above, we know that $d_\xi \geq h^{NE}$ and $d_\xi \geq g$. The addable and removable nodes up to $N^{r,NE}_{f_NE}$ contribute by $-h_{NE}$ to $a_\xi - b_\xi$. Next, pair up every removable node in the south-western part of the diagram with the next addable node, up to and including pairing $N^{r,SW}_{f_{SW}}$ with $N^{a,M}$. Each pair contributes a nonnegative amount to the degree, except for the last pairing of $N^{r,SW}_{f_{SW}}$ with $N^{a,M}$, which contributes $-g$. But then the total degree contributed by $[\xi]$ is greater than or equal to $2d_\xi - g - h^{NE} \geq 0$. Suppose instead that $g \geq h^{SW}$. From the inequalities above, we know that $d_\xi \geq h^{SW}$ and $d_\xi \geq h^{NE}$. Then, pair up every removable node in the north-eastern part of the diagram with the previous addable node, up to and including pairing $N^{r,NE}_{f_{NE}}$ with $N^{a,NE}_{f_{NE}}$. These nodes contribute $-h^{NE}$ to the degree. Do the same in the south-western part of $[\xi]$, and the nodes will contribute $-h^{SW}$ to the degree. The total degree contributed by $[\xi]$ is $2d_\xi + d^\mu(i_{N^{a, M}}) - h^{NE} - h^{SW} \geq 0$. This confirms that the degree of the given homomorphism must be greater than or equal to zero.

Finally we show that the inequality is strict, that is, $a - b + 2d > 0$. In fact, we have $a - b + 2d \geq \Rank(\mu^*)$. Notice first that one of the maximal $e$-small shapes up to $[\lambda^*]$ with residues appearing inside $[\mu^*]$ is $[\lambda^*]$ itself. But $[\lambda^*]$ contributes to the degree of the homomorphism by $\Rank(\lambda^*)$. To see this, first note that $d_{\lambda^*} = \Rank(\lambda^*)$. Moreover, $b_{\lambda^*} - a_{\lambda^*}$ is the sum of the number of nodes in diagonals ending at a removable node minus the sum of the number of nodes in diagonals ending at an addable node. By a theorem of Nazarov and Tarasov~\cite[Theorem 1.4]{NT98}, $b_{\lambda^*} - a_{\lambda^*} = \Rank(\lambda^*)$. Therefore, $a_{\lambda^*} - b_{\lambda^*} + 2d_{\lambda^*}$ also equals $\Rank(\lambda^*) = \Rank(\mu^*)$, which is a positive integer. This completes the proof.
\end{proof}
The author expects that \cref{T:MainStraight} can be further generalized.
\begin{conj} \label{C:Skew}
\cref{T:MainStraight} holds more generally, with $[\mu^*]$ and $[\lambda^*]$ being $e$-small \emph{skew} shapes, as opposed to straight shapes.
\end{conj}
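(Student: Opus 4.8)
The plan is to follow the proof of \cref{T:MainStraight} essentially line by line, systematically replacing each notion tailored to straight shapes by its skew analogue. First I would fix the parallel data: multipartitions $\blam,\bmu\vdash n$ with $\bmu\gdom\blam$, a multipartition $\bnu=\blam\cup\bmu$ of $n+\gamma$ such that $[\mu^*]=[\bnu]\setminus[\blam]$ and $[\lambda^*]=[\bnu]\setminus[\bmu]$ are congruent removable $e$-small \emph{skew} shapes, and a chain $[\mu^*]=[\xi^0]\prec[\xi^1]\prec\cdots\prec[\xi^c]=[\lambda^*]$ of the maximal removable $e$-small skew subshapes of $[\mu^*]$ lying between $[\mu^*]$ and $[\lambda^*]$. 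One first needs to pin down ``subshape'' for skew shapes: $[\xi]$ is a subshape of $[\rho]$ if, after translating so the top-left nodes of the two skew shapes coincide, $[\xi]\subseteq[\rho]$, $[\xi]$ is itself a skew shape, and corresponding nodes carry equal residues. Since an $e$-small skew shape occupies at most $e-1$ consecutive diagonals, each of its diagonals is a single contiguous run of nodes; I would single out the diagonal through the top-left node of $[\mu^*]$, of residue $i_1$, as the analogue of the main diagonal, extend $\Rank$ to skew shapes as the number of nodes on that distinguished diagonal, and take $L=\varepsilon_m^d\,e^\blam_\bmu$ with $m$ the number of basic $i_1$-strings and $d=\sum_{j=1}^c\Rank[\xi^j]$. (If it emerges that for a skew shape more than one diagonal is ``forced'' when placing $[\xi^j]$ at an earlier position, $L$ should instead be a product of elementary symmetric polynomials in the disjoint sets of variables attached to those residues; such a product still commutes with $R_n^\Lambda$, so the structure of the argument is unaffected.)

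Next I would re-establish, in the skew setting, the chain of technical lemmas. \cref{L:MainStraight} becomes a statement about sliding dots on the strings reaching the distinguished diagonal of a skew shape past a block of crossings; its induction on the number of rows should survive, although tracking which residue sits at the left- and right-hand ends of each row is genuinely messier for a skew shape, and \cref{E:Garnir} will be invoked at the (varying) left ends of the rows of $[\mu^*]$ rather than at a single column. \cref{L:MainSubshape,L:DotsHom}---placing a skew subshape at an earlier position by attaching one dot to each $i_1$-string reaching it, and the resulting $R_\gamma$-homomorphism $S^{\lambda^*}\to S^\bnu_\gamma$---rest on immobility arguments (\cref{L:ImmobileResidue,L:Stubborn3Sets,L:ImmobileIgnore}) which are combinatorial and do not use straightness, so they should transfer once ``main diagonal'' is replaced by the distinguished diagonal. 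Likewise \cref{L:ProofMaxFlow}, the max-flow/min-cut bound killing a monomial once more than $\min(r,k)$ dots are applied, should carry over verbatim after ``leftward/rightward'' is defined relative to the distinguished diagonal. With these in place, conditions \textbf{(A)} and \textbf{(B)} of \cref{L:EllersMurray} follow exactly as in the straight case, and the assembly of the $\sigma'_{\mu^*,\xi^j}$-boxes together with the double-crossing-removal argument producing a single standard monomial $v_{\t^*_\bmu}$ are also unchanged in form.

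The hard part will be twofold. Condition \textbf{(C)} relies on the purely combinatorial \cref{L:StandardBitableaux}, whose proof fills the diagonals of $[\rho]$ one at a time and uses crucially that the maximal rim hook of a partition is a single connected $\Gamma$-shaped strip. For a skew shape that rim breaks into several connected pieces, and the diagonal-filling induction must be run separately on each piece, with extra care at the inner corners of $[\rho]$; I expect that proving the skew analogue of \cref{L:StandardBitableaux} is the principal obstacle. The second difficulty is positivity of the degree: already for straight shapes this is the most intricate part of the proof, reducing to a pairing of addable with removable nodes along a maximal rim hook together with the Nazarov--Tarasov identity $b_{\lambda^*}-a_{\lambda^*}=\Rank(\lambda^*)$. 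For skew shapes one would need the known skew generalization of that identity and a correspondingly more delicate pairing that splits the rim of each intermediate shape into its connected components; I would still expect $a-b+2d\ge\Rank([\mu^*])>0$ to hold, but the verification will be substantially longer.

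Finally, the case $\ell=2$, $e=\infty$ of this argument would recover \cite[Theorem 3.14]{Witty}, so completing the plan would fully unify \cref{T:Carter-Payne}, \cite[Theorem 3.13]{lm14} and Witty's theorem as anticipated in the introduction. I would therefore first test the skew versions of \cref{L:MainStraight}, \cref{L:StandardBitableaux} and the degree formula against small examples, such as skew hooks and $2\times 2$-type skew shapes, before attempting the general argument, since those experiments are the likeliest route to identifying the correct notion of ``forced diagonal'' and hence the correct choice of $L$.
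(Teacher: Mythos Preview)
This statement is a \emph{conjecture}; the paper does not prove it, and explicitly discusses why the approach of \cref{T:MainStraight} does not straightforwardly extend. So there is no ``paper's own proof'' to compare against. That said, your proposal is a reasonable plan of attack, but you have misidentified where the principal obstruction lies.

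You assert that once the technical lemmas are re-established, ``conditions \textbf{(A)} and \textbf{(B)} of \cref{L:EllersMurray} follow exactly as in the straight case,'' and locate the hard work in condition \textbf{(C)} (via a skew \cref{L:StandardBitableaux}) and in positivity of the degree. The paper's own diagnosis is the opposite: for a genuine skew $[\mu^*]$ one is forced to apply dots to strings of \emph{several different residues}, and it is precisely conditions \textbf{(A)} and \textbf{(B)} that then become much harder. Your parenthetical remark that $L$ could be taken as a product of elementary symmetric polynomials in disjoint variable sets, with ``the structure of the argument unaffected,'' glosses over this: the max-flow argument of \cref{L:ProofMaxFlow} controls how many dots of a \emph{single} residue can be absorbed before the monomial dies, and once dots of several residues interact the bookkeeping that forces all extra strings into $[\lambda^*]$ (hence \textbf{(A)}) and kills everything in $\check S^\bnu_\blam$ (hence \textbf{(B)}) no longer reduces to a single flow computation. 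The paper reports that a skew analogue of \cref{L:MainStraight} \emph{can} be proved and used to recover Witty's $e=\infty$ theorem, and that the conjecture holds for skew \emph{hooks}, but that removing the $e=\infty$ hypothesis in general is the open point.

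A smaller issue: your ``distinguished diagonal through the top-left node'' is not the right invariant. The paper points to the \emph{generalized Durfee rank} of \cite{NT98} as the skew replacement for $\Rank$, which for a skew shape is not simply the length of one diagonal. Getting this right is tied to choosing the correct $L$, and hence to the difficulty above.
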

For straight shapes, the rank of $\lambda^*$ played an important role in the description of the degree of the homomorphisms. For skew shapes, the generalized Durfee rank described in~\cite{NT98} would play the analogous role. Applying the techniques in this paper to skew shapes runs into the problem that it becomes necessary to apply dots to strings of different residues, which then makes proving conditions \textbf{(A)} and \textbf{(B)} of \cref{L:EllersMurray} much harder. However, it is possible to prove a skew-shape version of \cref{L:MainStraight} and use it to reprove Witty's theorem~\cite[Theorem 3.14]{Witty}. It is possible to prove \cref{C:Skew} in the special case of removable skew hooks. Moreover, it is possible to use an inductive argument to generalize \cite[Theorem 3.14]{Witty} to arbitrary level, as will appear in the author's doctoral thesis. However, it is not easy to see how one may dispose of the condition $e = \infty$. These will appear in the doctoral thesis of the author.

\begin{rem}
Robert Muth~\cite{muthskew} has introduced graded skew Specht modules with a definition in terms of a cyclic generator and relations, similar to the definition of Specht modules from \cite{kmr} that we describe in \cref{S:Background}. Our results apply in this more general picture without any modifications, so that $\blam$ and $\bmu$ may be skew partitions. Note that the shape $[\mu^*]$ must be straight, as we have not been able to prove our main result for a skew $[\mu^*]$ yet.
\end{rem}

\begin{rem}
Quiver Hecke algebras have an automorphism $\sigma$ given by reflecting diagrams across a vertical line, and further multiplying by $(-1)^s$, where $s$ is the number of crossings between two strings of the same residue. This automorphism is already described in \cite{kl09}. Quiver Hecke algebras of type A have another automorphism $\sgn$ described in \cite{kmr}. It is easy to check that twisting with $\sgn \circ \sigma$ gives an isomorphism between $S^\bnu$ and $S^{\bnu^{\circlearrowleft}}$ as $R_n$-modules, where $\bnu^{\circlearrowleft}$ is the skew multipartition resulting from rotating each component of $\bnu$ by $180^{\circ}$ and arranging them in opposite order compared to $\bnu$, with the residues of nodes changing as $i(N^{\circlearrowleft}) = -i(N)$. This twist identifies $\Hom_{R_n}(S^\blam, S^\bmu)$ with $\Hom_{R_n}(S^{\blam^{\circlearrowleft}}, S^{\bmu^{\circlearrowleft}})$. In particular, applying this twist to \cref{T:MainStraight} yields some new homomorphisms between skew Specht modules, where $\blam$ and $\bnu$ differ by a north-west-removable $e$-small shape $[\mu^*]$ such that $[\mu^*]^\circlearrowleft$ is a straight shape. 
\end{rem}

\bibliographystyle{amsalpha} 

\addtocontents{toc}{\vspace{2em}} 
\bibliography{Preamble/master}

\end{document}